\newcommand{\mathkey}{
\stackon[0 pt]{$\Delta$}{$\boldsymbol{\circ}$}}
\newcommand{\Gw}{\Gamma(m,\underline{w})}
\newcommand{\Gpp}{\Gamma(m,\pi,p,*)}
\newcommand{\Bpp}{\mathcal{B}(m_{1},m_{2},\pi,p)}
\newcommand{\Dpp}{\mathkey(m_{1},m_{2},\pi,p,*)}
\newcommand{\Lpp}{\Lambda(m_{1},m_{2},\pi,p,*)}
\newcommand{\MGpp}{M(\Gamma(m,\pi,p,*))}
\newcommand{\MBpp}{M(\mathcal{B}(m_{1},m_{2},\pi,p))}
\newcommand{\MDpp}{M(\mathkey(m_{1},m_{2},\pi,p,*))}
\newcommand{\MLpp}{M(\Lambda(m_{1},m_{2},\pi,p,*))}
\newcommand{\norm}[1]{\vert \vert \underline{\boldsymbol{#1}}\vert \vert }
\newcommand{\vect}[1]{\underline{\boldsymbol{#1}}}
\newcommand{\RG}[1]{\mathbb{R}[#1]}
\newcommand{\I}[1]{I[#1]}
\def\@maketitle{%
  \newpage
%  \null% DELETED
%  \vskip 2em% DELETED
  \begin{center}%
  \let \footnote \thanks
    {\LARGE \@title \par}%
    \vskip 1.5em%
    {\large
      \lineskip .5em%
      \begin{tabular}[t]{c}%
        \@author
      \end{tabular}\par}%
    \vskip 1em%
    {\large \@date}%
  \end{center}%
  \par
  \vskip 1.5em}
\newtheorem*{rep@theorem}{\rep@title}
\newcommand{\newreptheorem}[2]{%
\newenvironment{rep#1}[1]{%
 \def\rep@title{#2 \ref{##1}}%
 \begin{rep@theorem}}%
 {\end{rep@theorem}}}
\newtheorem{theoremalph}{Theorem}
\newtheorem{theorem}{Theorem}[section]
\newtheorem{lemma}[theorem]{Lemma}
\newtheorem{cor}[theorem]{Corollary}
\newtheorem*{theorem*}{Theorem}
\newtheorem*{lemma*}{Lemma}
\newtheorem{question}{Question}
\newtheorem*{question*}{Question}
\newtheorem*{proposition*} {Proposition}
\theoremstyle{definition}
\newtheorem{definition}[theorem]{Definition}
\newtheorem{remark}[theorem]{Remark}
\newtheorem*{remark*}{Remark}
\setlist[enumerate]{align=left}
\title{Property (T) in random quotients of hyperbolic groups at densities above $1\slash 3$}
\author{Calum J. Ashcroft}
\date{\vspace{-2em}}
\begin{document}

\maketitle
	\begin{abstract}
We study random quotients of a fixed non-elementary hyperbolic group in the Gromov density model. Let $G=\langle S\;\vert\; T\rangle $ be a finite presentation of a non-elementary hyperbolic group, and let $Ann_{l,\omega }(G)$ be the set of elements of norm between $l-\omega(l)$ and $l$ in $G$. A random quotient at density $d$ and length $\omega$-near $l$ is defined by killing a uniformly randomly chosen set of $\vert S_{l}(G)\vert ^{d}$ words in $Ann_{l,\omega (l)}(G)$, where $\omega (l) =o_{l}(l)$. We prove that for any $d>1\slash 3$, such a quotient has Property (T) with probability tending to $1$ as $l$ tends to infinity. This result answers a question of Gromov--Ollivier and strengthens a theorem of \.{Z}uk (c.f Kotowski--Kotowski).
	\end{abstract}

%----------------------------------------------------------------------------------------
%   Introduction
%----------------------------------------------------------------------------------------
%\input{Sections/introduction}

\section{Introduction}

The study of random quotients of hyperbolic groups goes back to the origins of geometric group theory. Indeed, Gromov posed a question concerning Property (T) in such quotients in his original monograph \cite[$\S$4.5C]{Gromov_hyperbolic}. This question was later refined by Ollivier in \cite[$\S$IV.c.]{Ollivier_Jan_Invitation}. Let $G=\langle S\;\vert \; T\rangle $ be a finite presentation of a non-elementary hyperbolic group with growth rate $\mathfrak{h},$ so that letting $S_{l}(G)$ be the sphere of radius $l$ in $G$, we have $\vert S_{l}(G)\vert =\eta(1+o_{l}(1))\exp\{\mathfrak{h}l\}$ for some constant $\eta=\eta (G) >0$ \cite{Cannon_combinatorial_structure_hyperbolic_groups} (see e.g. \cite{calegari2013ergodic} for an overview). Since hyperbolic groups are coarse-geometric objects, it is most natural to study quotients at length `near' $l$; see e.g. \cite[\S 1.2C]{Ollivier_Jan_Invitation}.
\begin{question*}[Gromov--Ollivier]
Does a random quotient of a non-elementary hyperbolic group $G$ at density $d>1\slash 3$ have Property (T) with probability tending to $1$ as $l$ tends to infinity?
\end{question*}
The above question was answered in the case of free groups by a major result of \.{Z}uk \cite{Zuk} (c.f. \cite{kotowski, DRUTU_Mackay, ashcroft2021property}). This work provided a large class of hyperbolic groups with (T), a previously rare phenomenon. However, one of the major advances of geometric group theory when compared to classical combinatorial group theory is the ability to finely control the quotients of hyperbolic groups, as opposed to understanding only the quotients of free groups. It is therefore still of great interest to answer the Gromov--Ollivier question; we prove the following, answering the Gromov--Ollivier question in the affirmative. 

\begin{theoremalph}\label{mainthm: property t in random quotients of hyperbolic groups}
Let $G=\langle S\;\vert \; T\rangle $ be a non-elementary hyperbolic group with growth rate $\mathfrak{h}.$ Let $\omega(l)\leq \log\log l$ be any slow-growing function. Let $d>1\slash 3$ and for each $l\geq 1$ uniformly randomly select a set $\mathcal{R}_{l}\subseteq Ann_{l,\omega}(G)$ of size $ \exp\{\mathfrak{h}ld\}$. The group $G\slash \langle\langle \mathcal{R}_{l}\rangle\rangle$ has Property (T) with probability tending to $1$ as $l$ tends to infinity.
\end{theoremalph}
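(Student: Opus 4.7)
The strategy is to apply a \.{Z}uk-style spectral criterion for Property~(T), as in \cite{Zuk} and its refinement by Kotowski--Kotowski \cite{kotowski}: it is enough to present the quotient group as the fundamental group of a $2$-complex whose link graph at every vertex has normalised spectral gap $\lambda_{1} > 1/2$. The plan is to construct such a triangular presentation directly from the random relator set $\mathcal{R}_{l}$, exhibit the associated link graph as a random bipartite (or tripartite) graph of controllable average degree, and establish the spectral gap via a matrix Chernoff / trace-method estimate. The density condition $d>1/3$ is exactly what is needed for the mean degree of the link graph to cross the threshold at which a spectral gap larger than $1/2$ appears.

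Concretely, I would fix $m \approx l/3$ and, for each $r \in \mathcal{R}_{l}$, choose a canonical geodesic factorisation $r = u_{1}u_{2}u_{3}$ with $|u_{i}| \approx m$. This yields an auxiliary triangular presentation whose letters are elements of $S_{m}(G)$ and whose relators have length three. Since $|S_{m}(G)| = \eta\exp\{\mathfrak{h}m\}(1+o_{m}(1))$, the count $|\mathcal{R}_{l}| = \exp\{\mathfrak{h}ld\}$ means that the link graph at the identity has average degree of order $|S_{m}(G)|^{3d-1+o(1)}$. For $d>1/3$ this grows as a positive power of $|S_{m}(G)|$, so a matrix concentration bound should place the normalised adjacency operator close in operator norm to its (highly structured) expectation, whose spectral gap exceeds $1/2$; \.{Z}uk's criterion then delivers Property~(T).

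The principal obstacle is that a non-elementary hyperbolic group does \emph{not} present its geodesic spheres with the uniformity that makes the free-group Chernoff estimate immediate. Three interrelated issues must be resolved: (a) a given $r \in Ann_{l,\omega}(G)$ may admit several essentially different factorisations $r = u_{1}u_{2}u_{3}$, creating correlations among the effective relators; (b) $Ann_{l,\omega}(G)$ is a union of spheres of different (though asymptotically comparable) sizes, so a uniform draw from the annulus is not uniform on each sphere; and (c) accidental coincidences $u_{1}u_{2}u_{3} = u_{1}'u_{2}'u_{3}'$ in $G$, induced by the relations of $G$ itself, introduce parasitic edges into the link graph. Controlling these phenomena is a combinatorial--geometric problem in $G$ that exploits hyperbolicity (thin triangles, quasi-convexity of spheres) and the rationality of the growth series $\eta\exp\{\mathfrak{h}l\}$.

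The hierarchy of auxiliary graphs $\Gw$, $\Gpp$, $\Bpp$, $\Dpp$, $\Lpp$ declared in the preamble suggests that the paper carries out this reduction from the non-uniform hyperbolic setting to a regular random bipartite model through a sequence of such intermediate objects, each tuned to absorb one layer of non-uniformity (word length stratification, prefix/suffix type, then raw adjacency structure). I expect this reduction, together with the verification that each of (a)--(c) contributes only a multiplicative factor $1+o(1)$ to the relevant spectral quantities, to be the bulk of the technical work. Once it is in place, the spectral gap estimate on the resulting regular random graph is essentially the one from the free-group case, and the theorem follows by applying the Kotowski--Kotowski criterion to the identity link.
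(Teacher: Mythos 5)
The central gap in your proposal is the choice of spectral criterion. You propose to apply \.{Z}uk's criterion (in its Kotowski--Kotowski form) to the link graph of a triangular presentation obtained by splitting each relator as $r=u_{1}u_{2}u_{3}$. The paper argues explicitly that this is precisely what one must \emph{not} do for a general hyperbolic group. \.{Z}uk's criterion populates the link with three edge families per relator, namely $(u_{1},u_{3}^{-1})$, $(u_{1}^{-1},u_{2})$ and $(u_{2}^{-1},u_{3})$. The second and third families join \emph{consecutive} pieces of a geodesic, so they are rigidly constrained by the state of the automatic (Markov) structure at the single splicing point; when the Cannon automaton for $\mathcal{L}^{geo}$ is not irreducible (which it generally is not outside the free case) this forces the graph to decompose into dense clusters indexed by automaton states, with only sparse connections between them, and the normalised spectral gap can easily fall below $1/2$. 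The paper's actual route is a \emph{new} spectral criterion (Theorem \ref{thm: directed new spectral criterion}), derived from Ozawa's algebraic characterisation of Property~(T) via $\Delta_{\nu}^{2}-\kappa\Delta_{\nu}\geq_{\RG{\Gamma}}0$, which only requires the single family of edges $(u_{1},u_{3}^{-1})$. These edges are separated by the full middle block $u_{2}$, over which the Markov chain mixes, so the resulting graph is amenable to the random-graph machinery in Section \ref{appendix: Spectral theory of restricted graphs}. Your proposal never confronts this obstruction and, as written, would stall at exactly the point where the paper changes criterion.

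A second, related omission: you do not identify the Markov-chain input that makes the counting work. The paper uses the maximal component $C^{\max}$ of the Cannon automaton (Gekhtman--Taylor--Tiozzo), the Parry measure on its edge subshift, and the averaged convergence $B_{L,\omega}\to\beta\alpha$ (Lemma \ref{lem: convergence of approximation}, needed because $C^{\max}$ may be periodic; hence the slow-growing $\omega$) to get uniform $(1+o(1))$ estimates for edge probabilities and degrees, and Lemma \ref{lem: Cmax gens finite index} to ensure that $\mathcal{W}(R_{l})$ generates a finite-index subgroup -- a hypothesis your version of the criterion would also need but which you do not address. You correctly flag the non-uniformity of $Ann_{l,\omega}(G)$ and the boundedness of fibres of the evaluation map as issues, and you correctly compute that $d>1/3$ is the threshold for growing mean degree, so the numerology in your sketch is sound; but the two structural moves that actually carry the proof -- replacing \.{Z}uk's link criterion by the Ozawa-based one-edge criterion, and working inside $C^{\max}$ with the Parry measure -- are missing, and without the first of them the approach does not go through for a general non-elementary hyperbolic group.
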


A function $\omega$ is \emph{slow-growing} if both $\omega(n+1)\slash \omega(n)\rightarrow 1$ and $\omega (n) \rightarrow \infty$ as $n\rightarrow \infty.$ The main difficulty in the Gromov--Ollivier question stems from the fact that the Markov chain, $M$, parametrizing the automatic structure for $G$ will often not be connected, let alone ergodic. To circumvent this, we use a specific connected component $C^{max}$ of $M$ that arises in the work of \cite{gekhtman2018counting}. If we assume that $C^{max}$ is aperiodic, we may prove the following, stronger, theorem.

\begin{theoremalph}\label{mainthm: property t in random quotients of hyperbolic groups aperiodic case}
Let $G=\langle S\;\vert \; T\rangle $ be a non-elementary hyperbolic group with growth rate $\mathfrak{h}.$ Let $d>1\slash 3$ and for each $l\geq 1$ uniformly randomly select a set $\mathcal{R}_{l}\subseteq Ann_{l,2}(G)$ of size $\exp\{\mathfrak{h}ld\}$. If $C^{max}$ is aperiodic, then the group $G\slash \langle\langle \mathcal{R}_{l}\rangle\rangle$ has Property (T) with probability tending to $1$ as $l$ tends to infinity.
\end{theoremalph}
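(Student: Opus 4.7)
My plan is to prove Property (T) via the spectral criterion of Żuk \cite{Zuk} (with the refinements standard in this context): if a group $H$ acts freely and cocompactly on a simply-connected simplicial 2-complex such that the normalised Laplacian on every vertex link has spectral gap strictly larger than $1/2$, then $H$ has Property (T). I would build such a 2-complex for $G/\langle\langle \mathcal{R}_l \rangle\rangle$ from a presentation adapted to the geodesic automatic structure on $G$, and verify the spectral condition by a second-moment argument on the random relators.

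First I would exploit the automatic structure and the component $C^{max}$ of \cite{gekhtman2018counting} to choose geodesic representatives for the elements of $Ann_{l,2}(G)$ whose prefixes correspond to paths in $C^{max}$. Aperiodicity of $C^{max}$ supplies a uniform Perron--Frobenius asymptotic: for any two states $s,s'$ of $C^{max}$, the number of length-$k$ paths from $s$ to $s'$ equals $(1+o_k(1))c_{s,s'}e^{\mathfrak{h}k}$. Combined with \cite{gekhtman2018counting}, a uniform positive fraction of $Ann_{l,2}(G)$ is represented by $C^{max}$-paths; and once we cut a relation $r\in\mathcal{R}_l$ into three pieces of length approximately $l/3$ at prescribed Markov states, the induced marginals on the three pieces are near-uniform.

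Next, for each candidate ``middle'' element $g$ (a length-$l/3$ piece labelled with a pair of $C^{max}$ states and ancillary data $(\pi,p)$), I would build a bipartite link graph $\Bpp$ whose vertex sets parametrise the admissible left and right completions compatible with the Markov data, and whose edges are the pairs of completions induced by relators in $\mathcal{R}_l$ passing through $g$. At density $d>1/3$ the expected number of edges per link is $\asymp e^{\mathfrak{h}l(d-2/3)}$, exponentially dominating the $\asymp e^{\mathfrak{h}l/3}$ vertices on each side. This places us in the well-expanded regime where near-uniform random bipartite graphs almost surely have non-trivial spectral radius close to zero. A trace/second-moment computation on $\MBpp$, using that aperiodicity renders the expected matrix close to a rank-one projector onto the Perron--Frobenius eigenvector, would bound the normalised second eigenvalue below $1/2$ with failure probability exponentially small in $l$.

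Finally, a union bound over the $\asymp e^{\mathfrak{h}l/3}$ possible middles $g$ (affordable thanks to the exponential concentration) shows that a.a.s.\ every link in the chosen 2-complex has spectral gap exceeding $1/2$, whence Żuk's criterion yields Property (T) for $G/\langle\langle \mathcal{R}_l \rangle\rangle$. The hardest step is the spectral estimate on $\MBpp$: the edges of the bipartite graph are not independent uniform, since $\mathcal{R}_l$ is sampled without replacement and the automatic-structure constraints correlate incident pairs. Making the concentration strong enough to beat the exponential union bound requires delicately using aperiodicity of $C^{max}$ to reduce, at the level of expectations, to a model close enough to an Erdős--Rényi bipartite random graph that classical spectral expansion tools apply. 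This is precisely why the aperiodic Theorem B is strictly easier than the general Theorem A, where an extra averaging over the annulus width $\omega(l)$ is needed to wash out the periodic obstructions in $C^{max}$.
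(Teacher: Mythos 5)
Your strategy rests on Żuk's link criterion, but that is precisely the approach the paper sets out to avoid, and for a concrete reason that your proposal does not address. Żuk's criterion (equivalently, the Garland/Ballmann--\'{S}wi\k{a}tkowski approach via links) requires that for a relator $r=xyz$ split into three roughly equal pieces you add \emph{all three} edges $(x,z^{-1})$, $(y,x^{-1})$, $(z,y^{-1})$ to the link graph, and this only makes sense when each of the concatenations $xy$, $yz$, $zx$ is a geodesic word. For a free group (cyclically reduced relators) this is automatic, but for a general non-elementary hyperbolic group it is not: if $r\in\mathcal{L}^{G}$ is geodesic, its pieces $x,y,z$ are geodesic by prefix/suffix closedness, yet $xy$ and $yz$ are typically not in $\mathcal{L}^{G}$ and need not even be geodesic. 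The paper's author points this out explicitly in Section 1.1 and designs the criterion in Theorem~\ref{thm: directed new spectral criterion} (via Ozawa's characterization \cite{ozawapropertyt}) to use only the single edge $(x,z^{-1})$ per relator; this is the main new idea enabling the hyperbolic generalization, and without it the proof collapses.

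There is a second, independent problem in how you set up the random graph. You propose building a separate bipartite link $\Bpp$ for each ``middle'' element $g$, with left/right completions as vertex classes and relators through $g$ as edges, and then union-bounding over middles. Such a graph has $\asymp e^{\mathfrak{h}l/3}$ vertices on each side but only $\asymp e^{\mathfrak{h}l(d-1/3)}$ edges in total, so for any $1/3<d<1$ it is exponentially sparse (nearly all vertices isolated) and its normalised spectral gap is near $0$, not $>1/2$ --- the stated bound $e^{\mathfrak{h}l(d-2/3)}$ does not dominate $e^{\mathfrak{h}l/3}$ unless $d>1$. What the paper actually analyses is a single global graph $\Upsilon$ (equivalently $\Psi$) on all length-$\lfloor l/3\rfloor$ words, whose vertex degrees are $\asymp e^{\mathfrak{h}l(d-1/3)}\to\infty$ for $d>1/3$; and because the vertex set naturally splits into words $w$ with $w^{-1}\in\mathcal{L}^{G}$ and those with $w^{-1}\notin\mathcal{L}^{G}$, the correct random model is the three-block graph $\Dpp$ (a superposition of $\Gpp$, $\Lpp$ and $\Bpp$), not the bipartite $\Bpp$ alone. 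Finally, the role of aperiodicity in the paper is to replace the Ces\`{a}ro-averaged convergence of Lemma~\ref{lem: convergence of approximation} by the direct Perron--Frobenius limit (Lemma~\ref{lem: convergence of approximation aperiodic}), so that one can work with $Ann_{l,2}(G)$ rather than $Ann_{l,\omega}(G)$; your remark that this is why Theorem~\ref{mainthm: property t in random quotients of hyperbolic groups aperiodic case} is the easier case is correct, but it does not rescue the two structural issues above.
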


Clearly, Theorems \ref{mainthm: property t in random quotients of hyperbolic groups} and \ref{mainthm: property t in random quotients of hyperbolic groups aperiodic case} hold for any constant multiple of $\exp\{\mathfrak{h}ld\}$.
Since the Markov chain $M$ is connected and aperiodic for free groups, Theorem \ref{mainthm: property t in random quotients of hyperbolic groups aperiodic case}  strengthens the result of \cite{Zuk, kotowski} (c.f. \cite{DRUTU_Mackay}), which considers random quotients of free groups, i.e. the Gromov--Ollivier question for free groups only. \footnote{Ollivier's question for free groups concerns random quotients at length $l$ and discusses elements of length $l\slash 3$, so clearly requires $l$ to be divisible by $3$. There is no distinction in the literature between the case of random quotients at exactly length $3l$ versus quotients at length near $l$, with function $\omega (l)=2$. It should be possible to extend Theorem \ref{mainthm: property t in random quotients of hyperbolic groups aperiodic case} to choices of random subsets $R_{l}\subseteq S_{l}(G)$ in a manner similar to the extension of \.{Z}uk's theorem in \cite{ashcroft2021property}. Due to the length of this paper we have decided to omit this extension.}

Studying the random quotients of a general hyperbolic group is much more complicated than studying the random quotients of a free group. One of the main difficulties is that the sphere of radius $l$ in $F_{n}$ is far more easily understood than that of a more general hyperbolic group. This difficulty occurs as the Markov chain parameterizing geodesics is irreducible for free groups, but will often be reducible for hyperbolic groups. Detailed study of this Markov chain can be found in \cite{gekhtman2018counting} and \cite{gekhtman2020central}. A further difficulty of the Gromov--Ollivier question stems from the fact that we are quotienting by elements of the group, rather than words in the generating set. In the latter case, this would give us a quotient of a random quotient of $F_{n}$, and so we would be able to apply the work of \.{Z}uk \cite{Zuk} (c.f. \cite{kotowski}) to deduce (T). However, if the group $G$ has cogrowth $\xi$ (which necessarily satisfies $1\geq \xi\geq 1\slash 2$), and $d>1-\xi$, then such a random quotient is trivial with high probability \cite[Theorem 2]{olliviersharp}. We could therefore meet situations where $1\slash 3<1-\xi$, and so all such quotients would be trivial with high probability.

Importantly, the \emph{only} requirement we place on $G$ is that it is non-elementary hyperbolic. In particular, $G$ is allowed to contain torsion. We believe this to be the only result in the literature that considers the random quotients of \emph{any} non-elementary hyperbolic group. To our knowledge, the following two results are the only prior results in the literature studying random quotients of a class of non-elementary hyperbolic groups that is not limited to contain only free groups. In \cite{olliviersharp}, Ollivier considered quotienting non-elementary hyperbolic groups with `harmless torsion' by a set of $ \exp\{\mathfrak{h}ld\}$ words chosen randomly with respect to a sequence of measures $\{\mu_{l}\}_{l}$. In the case that the quotients are taken with respect to the uniform measure on elements of $S_{l}(G)$, then such a quotient is infinite, torsion-free, and hyperbolic with probability tending to $1$ if $d<1\slash 2$ and is trivial if $d>1\slash 2$ \cite[Theorem 3]{olliviersharp} (this is a specific case of the far more general result proved in the same paper; see \cite[Theorem 13]{olliviersharp}). Let us suppose instead that $G=\pi_{1}(X)$ for a non-positively curved cube complex, and take a random quotient of $G$ by a uniformly random set of $ \exp\{\mathfrak{h}ld\}$ words in $S_{l}(G)$. There exists a $d_{cub}=d_{cub}(\langle S\;\vert\;T\rangle)$ such that for $d<d_{cub}$, a random quotient at density $d$ is also the fundamental group of a non-positively curved cube complex with probability tending to $1$ \cite{futer2021cubulating}. Of course, Futer--Wise's theorem cannot hold for all hyperbolic groups $G$. If $G$ has (T), then this is inherited by all quotients of $G$, so that quotients of $G$ are the fundamental group of a non-positively curved cube complex if and only if they are finite. There are, to our knowledge, no previous results on Property (T) in random quotients of a hyperbolic group that is not the free group $F_{n}$.

The density bound in Theorem \ref{mainthm: property t in random quotients of hyperbolic groups} is clearly not optimal for every hyperbolic group $G$. If the group $G$ has $(T)$, then so does any of its quotients. We note that by Ollivier, for a group $G$ with harmless torsion, and for $d<1\slash 2$, the above quotients are also infinite, non-elementary hyperbolic, and torsion free with high probability \cite{olliviersharp}. Therefore, Theorem \ref{mainthm: property t in random quotients of hyperbolic groups}, coupled with \cite{olliviersharp}, provides an interesting class of group presentations with Property (T). However, we cannot apply the work of \cite{olliviersharp} in the case that $G$ has harmful torsion. The example provided by Ollivier of the failure of \cite[Theorem 4]{olliviersharp} is not enlightening for our purposes, since the quotient is not taken with a uniform selection of words; the chosen measures $\mu_{l}$ are non-uniform. This leads us to a natural question.
\begin{question}
Let $G$ be a non-elementary hyperbolic group with harmful torsion in the sense of Ollivier, and let $1\slash 3 <d<1\slash 2$. Let $\Gamma_{l}:=G\slash \langle \langle R_{l}\rangle\rangle$ be a sequence of quotients as in Theorem \ref{mainthm: property t in random quotients of hyperbolic groups} at density $d$ (so that has $\Gamma_{l}$ has (T) almost surely). Is $\Gamma_{l}$ almost surely infinite? Is $\Gamma_{l}$ almost surely hyperbolic?
\end{question}

The classical \emph{density model} of random groups is obtained by taking $G$ to be the free group $F_{n}=\langle a_{1},\hdots ,a_{n}\;\vert\;\rangle$ and restricting relators to be cyclically reduced words. It is typical when working with models of randomness to consider asymptotics. There are many theorems concerning the properties of random groups in the density model; all of the following statements hold with probability tending to $1$. Firstly, a random group in the density model at density $d<1\slash 2$ is infinite, non-elementary hyperbolic, and torsion free \cite{gromovasymptotic} (c.f. \cite{olliviersharp}).  Groups in the density model are virtually special for $d<1\slash 6$ \cite{Ollivier-Wise, Agol13} and contain a free codimension-$1$ subgroup for $d<3\slash 14$ \cite{montee2021random} (c.f. \cite{mackay_przytycki2015balanced}). If we instead consider Property (T), then a random group in the density model has Property (T) at $d>1\slash 3$ \cite{Zuk,kotowski,ashcroft2021property} (c.f. \cite{DRUTU_Mackay}). There is a similar model (the \emph{k-gonal model}) to the density model where we keep $l$ fixed and let $n$ tend to infinity. There are results concerning hyperbolicity \cite{Zuk,odrsquaremodel,ashcroftrandom}, cubulation \cite{duong,odrcubulatingsquare,odrzygozdz2019bent}, and Property (T) \cite{Zuk,kotowski,antoniuktriangle,odrzygozdz2019bent,Montee_prop_t,ashcroft2021property} in this model.

\subsection{The structure of the paper}
The paper is structured as follows. 
 In Section \ref{sec: graph defs}, we remind the reader of some definitions relating to graphs and spectral graph theory. In Section \ref{sec: a spectral criterion for Property (T)}, we introduce a new spectral criterion for Property (T), which we obtain by an application of the work of Ozawa \cite{ozawapropertyt}. This new spectral criterion relates to an easily understood graph, $\Upsilon$, constructed from a set of relators. The edges are added to this graph independently, and so we can model $\Upsilon$ by a suitable random graph. We split each relator $r=uvw$ into $3$ almost-equal parts and add edges between $u$ and $w^{-1}$ in $\Upsilon$. The main advantage to this is that we do not need $uw$ to be geodesic. If we were to use \.{Z}uk's criterion, then we would also add the edges $(v,u^{-1})$ and $(w,v^{-1})$. Since these edges require $uv$ and $vw$ to be geodesic, this could effectively decompose $\Upsilon$ into strongly connected `cliques', with distinct cliques only connected by few edges. This could potentially decrease the first eigenvalue of $\Upsilon$ below $1\slash 2$ (the value required by \.{Z}uk's criterion \cite{zuk1996}). 
 
 The spectral criterion we introduce requires two objects to be analysed. Firstly, we are required to find a language of geodesics with certain properties. We use a result of \cite{gekhtman2018counting} on the dynamical structure of geodesics in hyperbolic groups in Section \ref{sec: counting geodesics in hyperbolic groups}, which allows us to study a set of geodesic words. Section \ref{appendix: Spectral theory of restricted graphs} studies the eigenvalues of certain random graphs that model the graph $\Upsilon$ appearing in our spectral criterion. Section \ref{sec: Property (T) in quotients of hyperbolic groups} then uses these results to provide a proof of Theorem \ref{mainthm: property t in random quotients of hyperbolic groups}. We then discuss how to adapt the proof strategy in the case that $C^{max}$ is aperiodic in order to prove Theorem \ref{mainthm: property t in random quotients of hyperbolic groups aperiodic case}.

\subsection{Some notation}
We now briefly discuss some notation and assumptions. We will fix an ordering on $S$ and choose the automatic structure for $G$ of shortlex geodesics, i.e. each element of $G$ is represented by a unique word of the same length. Let $\mathcal{L}^{geo}$ be the regular prefix-closed language of lexicographically first geodesics. For each element $g$ of $G$, we identify it with the unique word $w_{g}\in \mathcal{L}^{geo}$ that evaluates to $g$. In particular, by $S_{l}(G)$ we are referring to the elements in the sphere of radius $l$ in $G$, as well as the unique elements of $\mathcal{L}^{geo}$ evaluating to them.

We are dealing with asymptotics, and so we frequently arrive at situations where $m$ is some parameter tending to infinity that is required to be an integer. If $m$ is not integer, then we will implicitly replace it by $\lfloor m\rfloor$. Since we are dealing with asymptotics, this will not affect any of our arguments. 
\begin{definition}
Given $m_{1}:\mathbb{N}\rightarrow \mathbb{N}$ a function such that $m_{1}(m)\rightarrow\infty$ as $m\rightarrow\infty$, we write $m_{2}=m_{2}(m_{1})$ to mean that $m_{2}(m)=f(m_{1}(m))$ for some function $f$, and $m_{2}(m_{1}(m))\rightarrow\infty$ as $m\rightarrow\infty$, i.e. $m_{2}$ only depends on $m_{1}$, and tends to infinity as $m_{1}$ tends to infinity.\end{definition}
The following are standard.
\begin{definition}
   Let $f,g:\mathbb{N}\rightarrow \mathbb{R}_{+}$ be two functions. We write  $f=o(g)$ if $f(m)\slash g(m)\rightarrow 0$ as $m\rightarrow\infty$,
       $f=O(g)$ if there exists a constant $N\geq 0$ and $M\geq 1$ such that $f(m)\leq N g(m)$ for all $m\geq M$, and $f=\Omega (g)$ if $g=o(f)$. Finally, $f=\Theta (g)$ if $f(m)\slash g(m)\rightarrow c\in (0,\infty)$ as $m\rightarrow\infty.$

   We write $f=o_{m}(g)$ etc to indicate that the variable name is $m$. Typically we will deal with functions $m_{2}=m_{2}(m_{1})$, and $f=f(m_{1},m_{2})$. We will write $f=o_{m_{1}}(g)$ etc to mean that the function $f'(m_{1})=f(m_{1},m_{2}(m_{1}))=o_{m_{1}}(g'(m_{1})),$ where $g'(m_{1})=g(m_{1},m_{2}(m_{1}))$.
 % If $f=f(m,n),g=g(m,n)$ we write $f=o_{m}(g)$ if for any fixed $n$, $f(m,n)\slash g(m,n)\rightarrow 0$ as $m\rightarrow\infty$, and similarly $f=O_{m}(g),\;f=\Omega_{m}(g).$
\end{definition}

\begin{definition}
Let $\mathcal{M}(m)$ be some model of random groups (or graphs) depending on a parameter $m$, and let $\mathcal{P}$ be a property of groups (or graphs). We say that $\mathcal{P}$ holds \emph{asymptotically almost surely with} $m$ (\emph{a.a.s.}$(m)$) if 
$$\lim_{m\rightarrow\infty}\mathbb{P}(G\sim \mathcal{M}(m)\mbox{ has }\mathcal{P})=1.$$

Again, we will regularly have to deal with cases where $m_{2}=m_{2}(m_{1})$ is fixed, $\mathcal{M}(m_{1},m_{2})$ is some model of random groups (or graphs) depending on parameters $m_{1}$ and $m_{2}$, and $\mathcal{P}$ is a property of groups (or graphs). We say that $\mathcal{P}$ holds \emph{asymptotically almost surely with} $(m_{1})$ (\emph{a.a.s.}$(m_{1})$) if 
$$\lim_{m_{1}\rightarrow\infty }\mathbb{P}(G\sim \mathcal{M}(m_{1},m_{2}(m_{1}))\mbox{ has }\mathcal{P})=1.$$
\end{definition}

Finally, we will often be working with bipartite graphs. The vertex partition of a bipartite graph $\Sigma$ will always be written $V(\Sigma)=V_{1}(\Sigma)\sqcup V_{2}(\Sigma)$.

\section*{Acknowledgements}
I would like to thank Emmanuel Breuillard for discussions on random groups, as well as introducing me to the matrix Bernstein inequality and its application to random graphs, which has been of great use. I would like to thank Tom Hutchcroft for advice on references for Markov chains. I am grateful to Daniel Groves for conversations regarding the ergodic theory of encodings of geodesics in hyperbolic groups. I would like to thank Sam Taylor for pointing out that Lemma \ref{lem: Cmax gens finite index} was a result already present in the literature, as well as pointing out a mistake in the original proof of Theorem \ref{mainthm: property t in random quotients of hyperbolic groups}, which lead to the corrected statement and proof of Theorem \ref{mainthm: property t in random quotients of hyperbolic groups}, as well as the introduction of Theorem \ref{mainthm: property t in random quotients of hyperbolic groups aperiodic case}. As always, I would like to thank Henry Wilton for his advice and mathematical conversations, as well as comments on an earlier draft of this paper.
%----------------------------------------------------------------------------------------
%   Graphs and eigenvalues
%----------------------------------------------------------------------------------------

%----------------------------------------------------------------------------------------
%   Graphs and eigenvalues
%----------------------------------------------------------------------------------------
%\input{Sections/Graphs_and_eigenvalues}

\section{Graphs and eigenvalues}\label{sec: graph defs}
Let $\Sigma=(V,E)$ be a graph (we allow multiple edges between vertices, as well as loops). Given two graphs $\Sigma_{1}=(V_{1},E_{1})$ and $\Sigma_{2}=(V_{2},E_{2})$, we define the \emph{union} of $\Sigma_{1}$ and $\Sigma_{2}$ as the graph $\Sigma_{1}\cup \Sigma_{2}:=(V_{1}\cup V_{2},E_{1}\sqcup E_{2})$. In particular, we take the union of vertices, and the disjoint union of edge sets (i.e. we assume that different graphs have disjoint edge sets).

Let $\Sigma=(V,E)$ be a graph with vertex set $V=\{v_{1},\hdots ,v_{m}\}$. The \emph{adjacency matrix} of $\Sigma$, $A(\Sigma)$, is the $m\times m$ matrix with $A(\Sigma)_{i,j}$ defined to be the number of edges between $v_{i}$ and $v_{j}$. The \emph{degree matrix} of $\Sigma$, $D(\Sigma)$, is the diagonal matrix with entries $D(\Sigma)_{i,i}=deg(v_{i})$. The \emph{normalised Laplacian} of $\Sigma$, $L(\Sigma)$, is defined by $$L(\Sigma)=I-D^{-1\slash 2}AD^{-1\slash 2}.$$
We note that $L(\Sigma)$ is symmetric positive semi-definite, with eigenvalues $$0\leq \lambda_{0}(L(\Sigma))\leq \lambda_{1}(L(\Sigma))\leq \hdots \leq \lambda_{m-1}(L(\Sigma))\leq 2.$$ For $i=1,\hdots, m$, we define $\lambda_{i}(\Sigma):=\lambda_{i}(L(\Sigma))$.

We can also define $\mathcal{L}(\Sigma)=I-D(\Sigma)^{-1}A(\Sigma).$ The matrices $L(\Sigma)$ and $\mathcal{L}(\Sigma)$ are similar, so that $\lambda_{i}(L(\Sigma))=\lambda_{i}(\mathcal{L}(\Sigma))$ for each $i$. We will switch between $L(\Sigma)$ and $\mathcal{L}(\Sigma)$ depending on which best suits our purpose.

If $M$ is a symmetric real $m\times m$ matrix, then $M$ has real eigenvalues, which we order by $\lambda_{0}(M)\leq \lambda_{1}(M)\leq \hdots \leq \lambda_{m-1}(M)$. We define the reverse ordering of eigenvalues $\mu_{1}(M)\geq \mu_{2}(M)\geq \hdots\geq \mu_{m}(M)$, i.e. $\mu_{i}(M)=\lambda_{m-i}(M)$. Therefore we may also define $\mu_{i}(\Sigma)=\mu_{i}(L(\Sigma)).$
The reason we introduce this ordering is that $\lambda_{i}(\Sigma)$ has a close connection to $\mu_{i}(A(\Sigma)).$
\begin{remark}\label{rmk: switching between eigenvalues of graphs}
Let $M$ be a symmetric $m\times m$ matrix. For $i=1,\hdots ,m:$ $$\mu_{i}(-M)=-\mu_{m+1-i}(M).$$
Let $\Sigma$ be an undirected graph. For $i=0,\hdots , \vert V(\Sigma)\vert -1$:
$$\lambda_{i}(\Sigma)=1-\mu_{i+1}(D(\Sigma)^{-1\slash 2}A(\Sigma)D(\Sigma)^{-1\slash 2})=1-\mu_{i+1}(D(\Sigma)^{-1}A(\Sigma)).$$\end{remark}

\begin{remark}
If $\Sigma$ is instead a directed graph, there is an analogous definition of $A(\Sigma)$, $D(\Sigma)$, $L(\Sigma)$, and $\lambda_{i}(\Sigma)$. In particular, $A(\Sigma)_{i,j}$ is equal to the number of directed edges starting at $v_{i}$ and ending at $v_{j}$. Furthermore $D(\Sigma)$ is the diagonal matrix with $D(\Sigma)_{i,i}=deg(v_{i})$ (i.e. the number of directed edges starting at $v_{i}$). 
\end{remark}
We note the following well known lemma that allows us to find an upper bound for $\vert \mu_{i}(A(\Sigma))\vert.$
\begin{lemma}\label{lem: max eigenvalue of adjacency matrix}
If $\Sigma$ is an undirected graph, then 
$$\max_{i}\vert \mu_{i}(A(\Sigma))\vert\leq \max_{v\in V(\Sigma)}deg(v).$$ If $\Sigma$ is bipartite, then 
$$\max_{i}\vert \mu_{i}(A(\Sigma))\vert\leq \max_{\substack{v\in V_{1}(\Sigma)\\w\in V_{2}(\Sigma)}}\sqrt{deg(v)deg(w)}.$$
\end{lemma}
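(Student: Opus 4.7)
Since this lemma is classical---the paper itself notes that it is well known---the proof should be very short. My plan is to give a direct Rayleigh-type eigenvector argument in both cases, avoiding any appeal to Perron--Frobenius or Gershgorin.

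For the first bound, I would take any real eigenvector $v$ of the symmetric matrix $A(\Sigma)$ with eigenvalue $\mu$ and choose a coordinate $i$ at which $|v_{i}|$ is maximised. Expanding $(A(\Sigma)v)_{i} = \mu v_{i}$ and using non-negativity of the entries, the triangle inequality delivers
\[
|\mu|\,|v_{i}| \;\le\; \sum_{j} A(\Sigma)_{ij}\,|v_{j}| \;\le\; |v_{i}|\sum_{j} A(\Sigma)_{ij} \;=\; |v_{i}|\,deg(v_{i}),
\]
so dividing by $|v_{i}|$ gives $|\mu| \le deg(v_{i}) \le \max_{v} deg(v)$, as required.

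For the bipartite bound I would exploit the block form
\[
A(\Sigma) \;=\; \begin{pmatrix} 0 & B \\ B^{T} & 0 \end{pmatrix}
\]
arising from $V(\Sigma) = V_{1}(\Sigma) \sqcup V_{2}(\Sigma)$, writing $v = (v^{(1)}, v^{(2)})$ for an eigenvector with eigenvalue $\mu$. Then $Bv^{(2)} = \mu v^{(1)}$ and $B^{T} v^{(1)} = \mu v^{(2)}$. Setting $M_{k} = \max_{u \in V_{k}(\Sigma)} |v^{(k)}_{u}|$ and applying the same coordinate argument to each of these two relations in turn produces
\[
|\mu|\,M_{1} \;\le\; M_{2}\cdot \max_{v \in V_{1}(\Sigma)} deg(v), \qquad |\mu|\,M_{2} \;\le\; M_{1}\cdot \max_{w \in V_{2}(\Sigma)} deg(w).
\]
Multiplying these inequalities and cancelling $M_{1}M_{2}$ yields the stated bound.

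There is no real obstacle. The only small point to check is that $M_{1}M_{2} > 0$ when $\mu \neq 0$, which is immediate from the two relations above (if either block vanishes, both must, forcing $\mu = 0$); and the $\mu = 0$ case is trivial. I would present both parts compactly together, since the bipartite argument is essentially two copies of the non-bipartite one glued by the block structure.
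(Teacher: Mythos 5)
Your proof is correct, and it takes a slightly different route from the paper's. The paper proves the first bound by appealing to the standard fact that $\lVert A\rVert_{\infty}$ dominates the spectral radius, and proves the bipartite bound by observing that the nonzero eigenvalues of $\left(\begin{smallmatrix}0&B\\B^{T}&0\end{smallmatrix}\right)$ are $\pm$ the singular values of $B$ and then invoking the norm inequality $\sigma_{\max}(B)\le\sqrt{\lVert B\rVert_{\infty}\lVert B\rVert_{1}}$ from Horn--Johnson. You instead give a self-contained max-coordinate argument in both cases: for the bipartite case you extract $Bv^{(2)}=\mu v^{(1)}$ and $B^{T}v^{(1)}=\mu v^{(2)}$, apply the coordinate argument to each, and multiply. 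These are two presentations of the same underlying estimate (your two inequalities are precisely the statements $\lVert B\rVert_{\infty}M_{2}\ge|\mu|M_{1}$ and $\lVert B\rVert_{1}M_{1}\ge|\mu|M_{2}$, so multiplying them recovers the Horn--Johnson bound), but yours has the virtue of being fully elementary and citation-free. You correctly dispose of the degenerate case $M_{1}M_{2}=0$. One small point: the general bound $\max_{v,w}\sqrt{\deg(v)\deg(w)}=\sqrt{\max_{v}\deg(v)\cdot\max_{w}\deg(w)}$ is what your multiplication actually produces, and it agrees with the lemma since the two maxima decouple. Also, "Rayleigh-type" is a slight misnomer for the max-coordinate argument, but this is purely terminological.
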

\begin{proof}
The first result follows as $||A(\Sigma)||_{\infty}=\max_{v\in V(\Sigma)}deg(v)$, and it is standard that $||A(\Sigma)||_{\infty}$ is an upper bound for the absolute values of the eigenvalues of $A(\Sigma)$. 

The second inequality follows from e.g. \cite[3.7.2]{hornjohnson}, as we now detail. In this case, we have $$A(\Sigma)=\begin{pmatrix}
0&B\\B^{T}&0
\end{pmatrix},$$ for some matrix $B$. By definition, the set of eigenvalues of $A$ are the set of \emph{singular values} of $B$, $\{\sigma_{j}(B)\}_{j}$. Therefore, $\max_{i}\vert \lambda_{i}(A(G))\vert =\max_{j}\vert \sigma_{j}(B)\vert$. By \cite[3.7.2]{hornjohnson}, $$\max_{j}\vert \sigma_{j}(B)\vert\leq \sqrt{\vert \vert B\vert \vert_{\infty} \vert \vert B\vert \vert_{1}}=\max_{\substack{v\in V_{1}(\Sigma)\\w\in V_{2}(\Sigma)}}\sqrt{deg(v)deg(w)}.$$
\end{proof}

%\begin{proof}
%The first result follows as $||A(G)||_{\infty}=\max_{v\in V(G)}deg(v)$, and it is standard that for any square matrix $M$, $||M||_{\infty}$ is an upper bound for the absolute values of the eigenvalues of $M$. 
%The second inequality follows from e.g. \cite[3.7.2]{hornjohnson}.
%\end{proof}
We now note some matrix inequalities, which we will use to analyse eigenvalues of graphs.
\begin{lemma*}[Weyl's inequality, \cite{Weyl}]
Let $A$ and $B$ be symmetric $m\times m$ real matrices. For $i=1,\hdots ,m$: $\mu_{i}(A)+\mu_{m}(B)\leq \mu_{i}(A+B)\leq \mu_{i}(A)+\mu_{1}(B).$
\end{lemma*}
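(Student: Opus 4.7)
The plan is to use the Courant--Fischer min--max characterisation of the eigenvalues of a real symmetric matrix, namely
$$\mu_i(M) = \max_{\substack{U \subseteq \mathbb{R}^m \\ \dim U = i}}\; \min_{\substack{x \in U \\ x \neq 0}} \frac{\langle Mx, x\rangle}{\langle x, x\rangle}.$$
This is a standard consequence of the spectral theorem: diagonalising $M$ in an orthonormal eigenbasis $v_1,\ldots,v_m$ ordered so that $Mv_j = \mu_j(M)v_j$, any $i$-dimensional subspace must meet the span of $v_i,\ldots,v_m$ nontrivially, which bounds the minimum Rayleigh quotient from above by $\mu_i(M)$; taking $U = \mathrm{span}(v_1,\ldots,v_i)$ achieves equality. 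I would cite Courant--Fischer directly or include this short derivation.

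For the upper bound $\mu_i(A+B) \leq \mu_i(A) + \mu_1(B)$, fix any $i$-dimensional subspace $U \subseteq \mathbb{R}^m$. For every nonzero $x \in U$, bilinearity gives
$$\frac{\langle (A+B)x, x\rangle}{\langle x, x\rangle} = \frac{\langle Ax, x\rangle}{\langle x, x\rangle} + \frac{\langle Bx, x\rangle}{\langle x, x\rangle} \leq \frac{\langle Ax, x\rangle}{\langle x, x\rangle} + \mu_1(B),$$
using that $\mu_1(B)$ is the supremum of the Rayleigh quotient of $B$ over all nonzero vectors. Taking the minimum over $x \in U \setminus \{0\}$ and then the maximum over $i$-dimensional $U$ delivers the inequality.

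The lower bound follows from the upper bound by a substitution trick, together with the identity $\mu_1(-B) = -\mu_m(B)$ already recorded in Remark~\ref{rmk: switching between eigenvalues of graphs}. Applying the upper bound to the symmetric matrices $A+B$ and $-B$ (whose sum is $A$) gives
$$\mu_i(A) \leq \mu_i(A+B) + \mu_1(-B) = \mu_i(A+B) - \mu_m(B),$$
which rearranges to $\mu_i(A+B) \geq \mu_i(A) + \mu_m(B)$, as required.

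There is no serious obstacle: the entire argument is one line once the min--max characterisation is in hand. The only conceptual point is that min--max is precisely the tool that upgrades the pointwise Rayleigh-quotient inequality for $\mu_1(B)$ into an inequality between the $i$th eigenvalues of $A$ and $A+B$ at arbitrary index $i$. Everything else reduces to bilinearity and a sign flip.
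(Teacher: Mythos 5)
Your proof is correct and is the standard derivation of Weyl's inequality from the Courant--Fischer min--max characterisation, with the lower bound recovered from the upper bound via the sign-flip identity $\mu_1(-B) = -\mu_m(B)$ (which is indeed Remark~\ref{rmk: switching between eigenvalues of graphs} with $i=1$). The paper does not supply a proof of this lemma at all --- it is stated and cited directly to Weyl's original paper --- so there is no in-text argument to compare against; your argument would stand as a complete, self-contained substitute for the citation.
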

In a similar spirit to this, we provide an application of the Horn inequalities.
\begin{lemma}\cite[Problem III.6.14.]{BhatiaRajendra1997Ma/R} 
Let $A$ be a $\vert V\vert \times \vert V\vert$ positive real diagonal matrix with minimum entry $A_{min}$ and maximum entry $A_{max}$, and let $B$ be a $\vert V\vert \times \vert V\vert$ symmetric real matrix. Then $$A_{min}\mu_{2}(B)\leq \mu_{2}(AB)\leq A_{max}\mu_{2}(B).$$
\end{lemma}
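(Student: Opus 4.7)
I would first reduce to the symmetric setting: although $AB$ is not symmetric in general, positivity of $A$ allows the similarity $A^{-1\slash 2}(AB)A^{1\slash 2}=A^{1\slash 2}BA^{1\slash 2}=:C$, so $AB$ has real eigenvalues with $\mu_{i}(AB)=\mu_{i}(C)$, and the claim reduces to bounding $\mu_{2}(C)$.

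Next I would apply the Courant--Fischer characterisations of $\mu_{2}$ to both $C$ and $B$, linked by the linear bijection $y=A^{1\slash 2}x$ (which preserves the dimension of subspaces). A direct computation gives
\[
\frac{x^{T}Cx}{x^{T}x}=\frac{y^{T}By}{y^{T}A^{-1}y},
\]
and the sandwich $A_{max}^{-1}\,y^{T}y\leq y^{T}A^{-1}y\leq A_{min}^{-1}\,y^{T}y$ converts the right-hand side to the ordinary Rayleigh quotient for $B$ scaled by either $A_{min}$ or $A_{max}$, depending on the sign of $y^{T}By$. For the lower bound, I would take a $2$-dimensional subspace $V^{*}$ optimising $\mu_{2}(B)=\max_{\dim V=2}\min_{y\in V\setminus\{0\}} y^{T}By\slash y^{T}y$; on $V^{*}$ every $y$ satisfies $y^{T}By\geq\mu_{2}(B)\,y^{T}y\geq 0$, so the substitution yields $x^{T}Cx\slash x^{T}x\geq A_{min}\mu_{2}(B)$ for all $x\in A^{-1\slash 2}V^{*}$, hence $\mu_{2}(C)\geq A_{min}\mu_{2}(B)$. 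The upper bound is dual: take $W^{*}$ optimising the $(n-1)$-subspace characterisation $\mu_{2}(B)=\min_{\dim W=n-1}\max_{y\in W\setminus\{0\}} y^{T}By\slash y^{T}y$, and split into the cases $y^{T}By\geq 0$ (where one gets $x^{T}Cx\slash x^{T}x\leq A_{max}(y^{T}By\slash y^{T}y)\leq A_{max}\mu_{2}(B)$) and $y^{T}By<0$ (where the quotient is already $\leq 0\leq A_{max}\mu_{2}(B)$).

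The main point requiring care is the sign of $y^{T}By$: the stated inequality is in fact only valid when $\mu_{2}(B)\geq 0$ (as the example $B=-I$ shows), but this holds in every intended application of the lemma in this paper, where $B$ plays the role of a positive semi-definite object such as an unnormalised graph Laplacian $D-A$. Modulo this sign bookkeeping there is no genuine obstacle; equivalently, the statement is a form of Ostrowski's theorem on the perturbation of eigenvalues under a congruence transformation, and I would expect the author's proof in \cite{BhatiaRajendra1997Ma/R} to follow essentially the same route.
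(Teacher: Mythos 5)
The paper does not actually prove this lemma: it cites \cite[Problem III.6.14]{BhatiaRajendra1997Ma/R} and moves on, so there is no ``paper proof'' against which to compare. What you have supplied is a genuine proof of the cited fact, and it is essentially the standard one — reduce via the similarity $AB \sim A^{1/2}BA^{1/2}$ to the symmetric congruence case, then run Courant--Fischer on the Rayleigh quotient identity $x^{T}Cx/x^{T}x = y^{T}By/y^{T}A^{-1}y$ with $y=A^{1/2}x$ and sandwich $y^{T}A^{-1}y$. This is exactly the route used to prove Ostrowski's theorem on eigenvalues under a congruence transformation, which is what Bhatia's Problem III.6.14 is. The subspace bookkeeping in your lower and upper bound arguments is correct, as is the case split on the sign of $y^{T}By$.

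Your sign caveat is genuine and worth keeping front and centre: as stated the double inequality fails for $\mu_{2}(B)<0$ (your $B=-I$ example is decisive), and the clean form of Ostrowski's theorem is $\mu_{i}(AB)=\theta_{i}\mu_{i}(B)$ with $\theta_{i}\in[A_{\min},A_{\max}]$, which only specialises to the stated inequality when $\mu_{2}(B)\geq 0$. I would, however, push back slightly on your remark that ``$B$ plays the role of a positive semi-definite object such as an unnormalised graph Laplacian'' in every application: in Lemmas~\ref{lem: eigenvalue of MGpp} and~\ref{lem: eigenvalue of MBpp} the lemma (in its $\mu_{1}$ form) is applied to matrices of the shape $-W_{1}^{-1}A_{1}$, whose top eigenvalue is $-\mu_{m}(W_{1}^{-1}A_{1})=o_{m}(1)$ and need not be nonnegative. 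The conclusions the paper draws are nevertheless safe, because the diagonal factor $W^{-1}W_{1}$ has entries bounded uniformly in $m$, so whichever of $A_{\min}$ or $A_{\max}$ appears after the sign of $\mu_{1}(B)$ is resolved, the product is still $o_{m}(1)$. So the sign issue does not undermine the paper's use of the lemma, but it does mean the lemma should really be read as Ostrowski's theorem rather than as a one-sided inequality.
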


\begin{definition}
For a vector $\vect{v}=(v_{1},\hdots ,v_{m})$ we define $\norm{v}=\sqrt{\sum v_{i}^{2}}.$ For an $m\times m$ matrix $A$, we define
\begin{enumerate}[label={$\arabic* )$}]
    \item $\vert\vert A\vert\vert_{2}=\max \limits_{\vect{v}\neq \vect{0}}\dfrac{\vert\vert A\vect{v}\vert\vert}{\norm{v}},$
    \item $\vert\vert A\vert\vert_{1}$ as the maximum absolute column sum of $A$, and
    
    \item $\vert\vert A\vert\vert_{\infty }$ as the maximum absolute row sum of $A$.
\end{enumerate}
\end{definition}
We note that $(\vert\vert A\vert\vert_{2})^{2}\leq  \vert\vert A\vert\vert_{1} \vert\vert A\vert\vert_{\infty}.$ Using Weyl's inequality and the fact that $\max_{i}\vert\mu_{i}(A)\vert\leq \vert\vert A\vert \vert_{2}$, we can deduce the following, which will be used implicitly throughout the paper.

\begin{lemma}
Let $A$ and $B$ be symmetric $m\times m$ real matrices. For $i=1,\hdots, m$:
$$\mu_{i}(A)-\vert\vert A-B\vert\vert_{2}\leq \mu_{i}(B)\leq \mu_{i}(A)+\vert\vert A-B\vert\vert_{2}$$
\end{lemma}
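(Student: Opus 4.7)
The plan is to apply Weyl's inequality (stated just above the lemma) to the decomposition $B = A + (B-A)$, and then control the largest and smallest eigenvalues of the perturbation $C := B - A$ by its spectral norm $\|C\|_2$.

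First I would set $C = B - A$, which is symmetric since both $A$ and $B$ are, and observe that $B = A + C$. Applying Weyl's inequality with the second summand $C$ gives, for each $i \in \{1,\dots,m\}$,
\[
\mu_i(A) + \mu_m(C) \;\leq\; \mu_i(B) \;\leq\; \mu_i(A) + \mu_1(C).
\]
Next I would use the standard fact (noted immediately before the lemma) that for any symmetric real matrix $C$, $\max_i \lvert \mu_i(C) \rvert \leq \|C\|_2$. In particular $\mu_1(C) \leq \|C\|_2$ and $\mu_m(C) \geq -\|C\|_2$. Substituting these bounds into the two-sided Weyl inequality yields
\[
\mu_i(A) - \|A - B\|_2 \;\leq\; \mu_i(B) \;\leq\; \mu_i(A) + \|A - B\|_2,
\]
which is the desired conclusion.

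There is essentially no substantive obstacle here: the lemma is a textbook consequence of Weyl's inequality, and the only ingredients are (i) symmetry of $A$ and $B$, so that $C$ is symmetric and Weyl's inequality applies, and (ii) the already-stated spectral-norm bound on eigenvalues. The only mild point to be careful about is the convention for ordering eigenvalues: the paper has defined $\mu_1 \geq \mu_2 \geq \cdots \geq \mu_m$, so $\mu_1(C)$ is the largest and $\mu_m(C)$ the most negative eigenvalue of $C$, which is exactly what makes the two-sided bound by $\pm\|C\|_2$ work.
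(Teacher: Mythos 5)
Your proof is correct and is exactly the argument the paper has in mind: the paper introduces this lemma immediately after Weyl's inequality with the remark that it follows from Weyl's inequality together with the bound $\max_i|\mu_i(C)|\leq\|C\|_2$, which is precisely the decomposition $B=A+(B-A)$ you use. No gap; your write-up just makes explicit the one-line derivation the paper leaves implicit.
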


Finally, we note the following consequence of the Courant-Fischer theorem: see, e.g. \cite[Corollary III.1.2]{BhatiaRajendra1997Ma/R}.
\begin{theorem*}
Let $A$ be a symmetric $m\times m$ real matrix with first eigenvalue $\mu_{1}(A)$ and corresponding eigenvector $\vect{e}$. Then 
$$\mu_{2} (A)= \max\limits_{\substack{\vect{v}\perp \vect{e}\\ \vert \vert \vect{v}\vert \vert =1}}\langle A\vect{v},\vect{v}\rangle=\max\limits_{\vect{v}\perp \vect{e}}\dfrac{\langle A\vect{v},\vect{v}\rangle}{\langle \vect{v},\vect{v}\rangle}.$$
\end{theorem*}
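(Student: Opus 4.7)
The plan is to deduce this directly from the spectral theorem for real symmetric matrices, by expanding in an eigenbasis and computing the Rayleigh quotient. Nothing here is deep; it is a textbook consequence of Courant--Fischer which the paper states for convenience.

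First, I would apply the spectral theorem to pick an orthonormal basis $\vect{e}_{1}, \vect{e}_{2}, \ldots, \vect{e}_{m}$ of $\mathbb{R}^{m}$ consisting of eigenvectors of $A$, where $A \vect{e}_{i} = \mu_{i}(A)\vect{e}_{i}$, and where we may take $\vect{e}_{1} = \vect{e}$ (if $\mu_{1}(A)$ has multiplicity greater than one, we choose the basis so that $\vect{e}_{1} = \vect{e}$; this is possible since we can complete $\vect{e}$ to an orthonormal basis of the $\mu_1$-eigenspace).

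Next, I would note that the second equality in the statement is immediate by homogeneity: for any nonzero $\vect{v}$ with $\vect{v} \perp \vect{e}$, the vector $\vect{v}/\norm{v}$ is a unit vector orthogonal to $\vect{e}$, and $\langle A\vect{v},\vect{v}\rangle / \langle \vect{v},\vect{v}\rangle = \langle A(\vect{v}/\norm{v}),\vect{v}/\norm{v}\rangle$. So it suffices to establish the first equality.

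For this, write any unit vector $\vect{v} \perp \vect{e}$ as $\vect{v} = \sum_{i=2}^{m} c_{i}\vect{e}_{i}$, where $\sum_{i=2}^{m} c_{i}^{2} = 1$ (the coefficient of $\vect{e}_{1} = \vect{e}$ vanishes by orthogonality, assuming the basis is chosen as above; if $\vect{e}$ is only one of several basis vectors spanning the $\mu_1$-eigenspace, the remaining basis vectors in that eigenspace still have eigenvalue $\mu_1 \geq \mu_2$, so the same bound works). Then
\begin{equation*}
\langle A\vect{v},\vect{v}\rangle = \sum_{i=2}^{m} c_{i}^{2}\mu_{i}(A) \leq \mu_{2}(A) \sum_{i=2}^{m}c_{i}^{2} = \mu_{2}(A),
\end{equation*}
with equality attained by $\vect{v} = \vect{e}_{2}$. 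This gives both the upper bound and its attainment, completing the proof.

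There is no real obstacle here; the only point requiring mild care is the case of repeated eigenvalues, where one must ensure the chosen orthonormal eigenbasis is compatible with the given eigenvector $\vect{e}$. This is standard and handled by extending $\vect{e}$ to an orthonormal basis of its eigenspace before picking orthonormal bases of the other eigenspaces.
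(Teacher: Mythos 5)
Your proof is correct. The paper does not actually supply a proof of this statement; it is cited as a known consequence of Courant--Fischer with a reference to \cite[Corollary III.1.2]{BhatiaRajendra1997Ma/R}, so there is no in-paper argument to compare against. The spectral-theorem-and-Rayleigh-quotient calculation you give is the standard textbook derivation and is complete: the decomposition $\vect{v}=\sum_{i\geq 2}c_{i}\vect{e}_{i}$ for a unit vector $\vect{v}\perp\vect{e}$, the bound $\sum_{i\geq 2}c_{i}^{2}\mu_{i}(A)\leq \mu_{2}(A)$ from the ordering $\mu_{2}\geq\mu_{3}\geq\cdots$, and attainment at $\vect{v}=\vect{e}_{2}$ are all correct, and the reduction of the second equality to the first by homogeneity is immediate. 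One stylistic remark: the parenthetical about repeated eigenvalues is harmlessly phrased but slightly roundabout --- once you observe $\mu_{i}(A)\leq\mu_{2}(A)$ for every $i\geq 2$ (which is just the ordering convention, regardless of multiplicities), the bound follows without any case distinction; the only place the choice of eigenbasis compatible with $\vect{e}$ matters is in guaranteeing $c_{1}=0$, which you handle correctly.
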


%----------------------------------------------------------------------------------------
%   Spectral criterion for property (T)
%----------------------------------------------------------------------------------------
%\input{Sections/A_spectral_criterion}
\section{A spectral criterion for Property (T)}\label{sec: a spectral criterion for Property (T)}
In this section we deduce a spectral criterion for Property (T); we first remind the reader of some of the relevant definitions. We focus only on finitely generated groups; for a further exposition the reader should see, for example, \cite{bekkadelaharpe}.

\begin{definition}
Let $\Gamma$ be a finitely generated group with finite generating set $S$, let $\mathcal{H}$ be a Hilbert space, and let $\pi: \Gamma\rightarrow\mathcal{U}(\mathcal{H})$ be a unitary representation of $\Gamma.$ We say that $\pi$ has \emph{almost-invariant vectors} if for every $\epsilon>0$ there is some non-zero $u_{\epsilon}\in\mathcal{H}$ such that for every $s\in S$, $||\pi(s) u_{\epsilon}-u_{\epsilon}||<\epsilon ||u_{\epsilon}||.$ 

We say that $\Gamma$ has \emph{Property (T)} if for every Hilbert space $\mathcal{H}$ and unitary representation $\pi:\Gamma\rightarrow\mathcal{U}(\mathcal{H})$ with almost-invariant vectors, there exists a non-zero invariant vector for $\pi$.
\end{definition}
\begin{remark}
It is standard that the above definition is not reliant on the choice of generating set $S$. Let $\Gamma$ be a finitely generated group. If $H$ is a finite index subgroup of $\Gamma$, then $\Gamma$ has Property (T) if and only if $H$ has Property (T) (see e.g. \cite[Theorem Theorem 1.7.1]{bekkadelaharpe}). If $\Gamma$ has Property (T) and $\Gamma'$ is a homomorphic image of $\Gamma,$ then $\Gamma'$ has Property (T) (see e.g. \cite[Thm 1.3.4]{bekkadelaharpe}).

\end{remark}

Now, let $G=\langle S\;\vert\;R\rangle$ be a finite presentation of a group. We assume that we have fixed a language $\mathcal{L}^{G}\subseteq  (S\sqcup S^{-1})^{*}$ with the following properties:
\begin{enumerate}[label={$\arabic* )$}]
    \item $\mathcal{L}^{G}$ is regular, prefix and suffix closed, and consists of geodesics words, and
    \item for any $l\geq 1$, the set, $\mathcal{L}^{G}_{l}$, of words of length $l$ in $\mathcal{L}^{G}$ generates a finite index subgroup of $G$.
\end{enumerate}
Recall that a language $\mathcal{L}$ is \emph{prefix closed} if for any word $uv\in \mathcal{L}$, $u\in \mathcal{L}$. It is \emph{suffix closed} if for any word $uv\in \mathcal{L}$, $v\in \mathcal{L}$. Finally, it is \emph{regular} if it can be recognised by a finite state automaton. In our application, the language $\mathcal{L}^{G}$ will appear as a set of specific subwords of the language recognised by the canonical automatic structure on a hyperbolic group. 

We define $\mathcal{L}^{G}_{l}$
to be the set of words in $\mathcal{L}^{G}$ of length $l$, and $\mathcal{L}^{G}_{l,\omega}$
to be the set of words in $\mathcal{L}^{G}$ of length between $l-\omega (l)$ and $l$.
Since the evaluation map $\mathcal{E}:\mathcal{L}^{G}\rightarrow G$ is not necessarily injective, we need to make the following definitions.
\begin{definition}
Let $G$ and $\mathcal{L}^{G}$ be as above, let $g\in G$ and $w\in \mathcal{L}^{G}$. We define the following.
\begin{enumerate}[label={$\arabic* )$}]
    \item $\overline{w}$ is the image of $w$ under $\mathcal{E}$.
    \item We say $g\in \mathcal{L}^{G}$ if there exists some $w\in \mathcal{L}^{G}$ with $\overline{w}=_{G}g$.
    \item We define $word(g):=\{w\in \mathcal{L}^{G}\;:\; \overline{w}=_{G}g\}$, and 
    $$\Phi (g):=\vert word (g)\vert.$$
\end{enumerate}
\end{definition}

We now define the graph that plays a central role in our spectral criterion.

\begin{definition}[{\bf The graph} $\boldsymbol{\Upsilon(G,\mathcal{L}^{G},R)}$]
Let $$R=\{r=(x_{r},y_{r},z_{r})\}_{r}$$ be a finite set of triples of elements in $\mathcal{L}^{G}$, such that for each $r\in R$: $x_{r}y_{r}z_{r}$ is reduced without cancellation and lies in $\mathcal{L}^{G}$; and each $x_{r},y_{r},z_{r}$ is reduced and is not the empty word. Let $\mathcal{R}=\{x_{r}y_{r}z_{r}\;:\;r\in R\}$ and $\mathcal{W}(R):=\cup_{r\in R}\{x_{r},y_{r},z_{r}\}.$ Define the directed graph $\Upsilon(G,\mathcal{L}^{G},R)$ as follows. Let $$V(\Upsilon(G,\mathcal{L}^{G},R))=\mathcal{W}(R)\cup(\mathcal{W}(R))^{-1}:=\mathcal{W}\cup \{w^{-1}\;\vert\; w\in \mathcal{W}(R)\}.$$
In particular, we add in all words in $\mathcal{W}(R)$ and their inverses. For each triple $r=(x_{r},y_{r},z_{r})\in R$, add the directed edges $(x_{r},z_{r}^{-1}),(x_{r}^{-1},z_{r}).$ By construction, $A(\Upsilon(G,\mathcal{W},R))$ is symmetric, so that 
$$L(\Upsilon(G,\mathcal{W},R)):=I-D(\Upsilon(G,\mathcal{W},R))^{-1\slash 2}A(\Upsilon(G,\mathcal{W},R))D(\Upsilon(G,\mathcal{W},R))^{-1\slash 2}$$ has real eigenvalues, all of which lie in $[0,2]$. 
For a vertex $w\in V$ (so that $w\neq w^{-1}$) define $$rep (w):=\vert\{(u,w,v)\in  R\} \vert+\vert\{(u,w^{-1},v)\in R\} \vert.$$ For a fixed choice of $R$ and for an element $g\in G$, we define $$deg (g):=\sum\limits_{\substack{w\in V\\\overline{w}=g}}deg_{\Upsilon(G,\mathcal{W},R)}(w),\mbox{ and }rep (g):=\sum\limits_{\substack{w\in V\\\overline{w}=g}}rep(w).$$ 
\end{definition}

We may now introduce our spectral theorem. Recall for $g\in G$, we write $g\in V(\Upsilon(G,\mathcal{L}^{G},R))$ when there exists $w\in V(\Upsilon(G,\mathcal{L}^{G},R))$ with $\overline{w}=_{G}g.$

\begin{theorem}\label{thm: directed new spectral criterion}
Let $G=\langle S\;\vert\; T\rangle$ be a finite presentation of a hyperbolic group. Let $R=\{r=(x_{r},y_{r},z_{r})\}_{r}$ be a finite collection of triples in $\mathcal{L}^{G}$ as above, such that $\mathcal{W}(R)$ generates a finite index subgroup of $G$. Let $\mathcal{R}=\{x_{r}y_{r}z_{r}:r\in R\}$.
Suppose that there exist constants $\epsilon,\;\delta>0$ such that for all $g\in G$ with $g\in V(\Upsilon(G,\mathcal{L}^{G},R))$: $ deg (g)\leq (1+\epsilon) deg(g^{-1})$ and $rep (g)\leq (1+\delta)deg(g). $
If $$\lambda_{1}\bigg(\Upsilon(G,\mathcal{L}^{G},R)\bigg)>\dfrac{1+\delta}{2-4\epsilon^{2}},$$ then $G\slash \langle \langle \mathcal{R}\rangle \rangle$ has Property (T).

\end{theorem}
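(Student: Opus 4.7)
The strategy is to apply Ozawa's sum-of-squares criterion for Property (T) to $\Gamma := G/\langle\langle\mathcal{R}\rangle\rangle$, with the spectral gap of $\Upsilon$ playing the role traditionally played by the spectrum of the link of a vertex in \.{Z}uk-style arguments. Since $\mathcal{W}(R)$ generates a finite-index subgroup $H \leq G$, its image $\overline{H}$ has finite index in $\Gamma$, so by the standard inheritance of (T) under finite-index supergroups it suffices to prove (T) for $\overline{H}$, which is generated by $\mathcal{W}(R)$ subject to the relations $\{x_r y_r z_r = 1\}_{r\in R}$. I would take an orthogonal representation $\pi:\Gamma\to \mathcal{O}(\mathcal{H})$, fix $v \in \mathcal{H}$, and define $f:V(\Upsilon)\to \mathcal{H}$ by $f(w)=\pi(\overline{w})v$. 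The goal is to show that the spectral-gap hypothesis, together with the distortion controls, forces $f$ to be essentially constant, whence $v$ is fixed by $\overline{H}$.

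The central identity is that for each edge $(x_r,z_r^{-1})$ of $\Upsilon$, the relation $x_r y_r z_r = 1$ in $\Gamma$ gives $\overline{z_r}^{-1}=\overline{x_r}\cdot\overline{y_r}$, whence
$$\bigl\|f(x_r)-f(z_r^{-1})\bigr\|^2 = \bigl\|\pi(\overline{x_r})v-\pi(\overline{x_r})\pi(\overline{y_r})v\bigr\|^2 = \bigl\|\pi(\overline{y_r})v-v\bigr\|^2$$
by unitarity of $\pi(\overline{x_r})$. Summing over all edges expresses the total Dirichlet energy of $f$ as $2\sum_{r}\|\pi(\overline{y_r})v-v\|^2$; this is precisely the trick enabled by writing each relator as a product $x_r y_r z_r$, since one only ever needs $\pi(\overline{y_r})$-defects on the right-hand side, not geodesicity of $x_r y_r$ or $y_r z_r$. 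Applying the Courant--Fischer characterisation of $\mu_2$ to $L(\Upsilon)$ coordinate-wise in $\mathcal{H}$ then yields a Poincaré-type inequality bounding $\lambda_1(\Upsilon)\sum_w \deg(w)\|f(w)-\bar f\|^2$ from above by this Dirichlet energy.

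The two distortion hypotheses are what convert this Poincaré inequality into a genuine Kazhdan-type estimate on $\overline{H}$. The condition $\mathrm{rep}(g)\leq(1+\delta)\deg(g)$ controls the over-counting of a single $\Gamma$-element by multiple vertices of $V(\Upsilon)$, and contributes the numerator factor $1+\delta$ when one compares the weighted sum over $V(\Upsilon)$ with the weighted sum over its image in $\Gamma$. The condition $\deg(g)\leq(1+\epsilon)\deg(g^{-1})$ is a near-symmetry of the degree function: since $\Upsilon$ is not quite symmetric as a weighted graph on $\Gamma$, one must symmetrise the Dirichlet energy by pairing each edge $(x_r,z_r^{-1})$ with $(x_r^{-1},z_r)$ (whose contribution $\|\pi(\overline{y_r}^{-1})v-v\|^2$ equals $\|\pi(\overline{y_r})v-v\|^2$), and the $\epsilon$-slack in the degree asymmetry costs a multiplicative factor that, using Weyl's inequality on the antisymmetric part of $A(\Upsilon)$ combined with the bound $\|A\|_2^2\leq \|A\|_1\|A\|_\infty$ of Section~\ref{sec: graph defs}, is of quadratic order in $\epsilon$. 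Packaging these ingredients yields
$$\sum_w \deg(w)\bigl\|f(w)-\bar f\bigr\|^2 \ \leq\ \frac{1+\delta}{(2-4\epsilon^2)\lambda_1(\Upsilon)}\sum_w \deg(w)\bigl\|f(w)-\bar f\bigr\|^2,$$
and the hypothesis $\lambda_1(\Upsilon)>(1+\delta)/(2-4\epsilon^2)$ forces the prefactor below $1$, so $f\equiv\bar f$; thus $\pi(\overline{w})v = \pi(\overline{w'})v$ for all $w,w'\in V(\Upsilon)$, which means $v$ is fixed by the subgroup generated by $\mathcal{W}(R)\cdot\mathcal{W}(R)^{-1}$, i.e.\ by $\overline{H}$. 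Running this on a sequence of almost-invariant unit vectors produces a nonzero invariant vector, giving (T) for $\overline{H}$, and hence for $\Gamma$.

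The main obstacle I anticipate is the bookkeeping around the distortion corrections. The idealised case $\epsilon=\delta=0$ recovers a $1/2$-gap threshold in the spirit of \.{Z}uk, but in the genuinely asymmetric setting the normalised Laplacian $L(\Upsilon)$ is not the Laplacian of a symmetric weighted graph on $\Gamma$, and the symmetrisation step has a cost that must be captured to \emph{quadratic} order in $\epsilon$ to recover the stated $(2-4\epsilon^2)^{-1}$ rather than a cruder bound such as $(2(1-\epsilon))^{-1}$. This in turn is why the graph-theoretic estimates in Section~\ref{sec: graph defs} (Weyl, Horn, $\|A\|_2^2\leq\|A\|_1\|A\|_\infty$) are spelled out in advance; the remainder of the argument is the routine passage from a local Poincaré inequality to Ozawa's sum-of-squares form of Property (T).
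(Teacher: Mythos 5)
Your high-level route is sound: pass to the finite-index subgroup generated by $\mathcal{W}(R)$, use a Poincar\'e inequality coming from $\lambda_1(\Upsilon)$, and charge the distortion corrections $\epsilon$ and $\delta$ against the spectral gap. You also correctly identify the key structural point -- that writing each relator as $x_r y_r z_r$ and using only the $(x_r,z_r^{-1})$ edges means the Dirichlet energy of $f(w)=\pi(\overline w)v$ collapses to $2\sum_r\|\pi(\overline{y_r})v-v\|^2$, with no geodesicity demanded of $x_ry_r$ or $y_rz_r$. However, there is a genuine gap in the way you close the argument. The inequality you propose to ``package'' to,
$$\sum_w \deg(w)\bigl\|f(w)-\bar f\bigr\|^2 \ \leq\ \frac{1+\delta}{(2-4\epsilon^2)\lambda_1(\Upsilon)}\sum_w \deg(w)\bigl\|f(w)-\bar f\bigr\|^2,$$
is self-referential, and if it held with prefactor strictly below $1$ for every vector $v$ in every representation, it would force $f$ to be constant unconditionally, i.e.\ $\pi(\overline{w}^{-1}\overline{w'})v=v$ for all $v$ and all $w,w'\in V(\Upsilon)$; that says the representation is trivial on a finite-index subgroup, which is absurd for an arbitrary $\pi$. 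The correct Kazhdan conclusion is a spectral-gap inequality, not that every $f$ is constant, and the step you have hidden is exactly where this distinction lives: the $\delta$-hypothesis converts $\mathcal{E}(f)=\sum_w\mathrm{rep}(w)\|\pi(\overline w)v-v\|^2$ into $(1+\delta)\sum_w\deg(w)\|\pi(\overline w)v-v\|^2$, but the latter equals $\mathrm{Var}(f)+(\sum_w\deg(w))\|v-\bar f\|^2$, and $\bar f$ is a certain \emph{average} $\pi(\mu)v$, not an invariant vector. Comparing that extra $\|v-\pi(\mu)v\|^2$ term back against the defect of $v$ is precisely the content of the Kazhdan estimate, and your outline leaves it unaddressed; likewise, the quadratic-in-$\epsilon$ loss is only asserted rather than derived.

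By contrast, the paper's proof never introduces a vector $v$ at all: it works entirely inside $\RG{\Gamma}$, building probability measures $\mu$, $\overline\mu$, $\xi$, symmetrising to $\nu=(\mu+\overline\mu)/2$, and proving the operator inequality $\Delta_\nu^2-\kappa\Delta_\nu\geq_{\RG{\Gamma}}0$ directly via sums of squares, with the $4\epsilon^2$ coming from the identity $\Delta_\nu^2=\tfrac12(\Delta_\mu\Delta_{\overline\mu}+\Delta_{\overline\mu}\Delta_\mu)+\tfrac14(\Delta_\mu-\Delta_{\overline\mu})^2$ together with Ozawa's lemma $2-ab-(ab)^*\leq_{\RG{\Gamma}}2(2-a-a^*)+2(2-b-b^*)$. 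The algebraic formulation handles the ``$\bar f$ is not invariant'' bookkeeping automatically, which is exactly why the paper routes through \cite[Main Theorem]{ozawapropertyt} rather than a direct representation-theoretic Poincar\'e argument. If you want to keep the representation-theoretic version, you must prove the pointwise form $\|\Delta_\nu v\|^2\geq\kappa\langle\Delta_\nu v,v\rangle$ and then deduce a spectral gap of $\Delta_\nu$ on $(\ker\Delta_\nu)^\perp$ -- not that $\mathrm{Var}(f)=0$.

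One further small slip: even granting $f\equiv\bar f$, the conclusion is $\pi(\overline w^{-1}\overline{w'})v=v$ for all $w,w'\in V(\Upsilon)$, so $v$ is fixed by $\langle \mathcal{W}(R)^{-1}\mathcal{W}(R)\rangle$, which need not contain $\mathcal{W}(R)$ itself.
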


In order to prove this, we will use a criterion due to Ozawa \cite{ozawapropertyt}. For $\Gamma$ a finitely generated group, let $\mathbb{R}[\Gamma]$ be the real group algebra, i.e. the set of all finitely supported functions $\Gamma\rightarrow \mathbb{R}$. We can write an element $\xi \in\RG{\Gamma}$ as $\sum_{g\in \Gamma}\xi (g)g$. The group algebra comes equipped with an involution map, $*$, given by $\xi^{*}(g)=\xi (g^{-1})$. We define the positive cone $$\Sigma^{2}\RG{\Gamma}=\left\{\sum_{i=1}^{n}\xi_{i}\xi^{*}_{i}\;:\;n\in\mathbb{N},\;\xi_{i}\in \RG{\Gamma}\right\}.$$ For $\xi\in \RG{\Gamma}$, we write $\xi\geq_{\RG{\Gamma}}0$ to indicate that $\xi \in \Sigma^{2}\RG{\Gamma}.$ Similarly, we write $\xi\geq_{\RG{\Gamma}} \phi$ if $\xi -\phi\geq_{\RG{\Gamma}} 0$.

Now, let $A$ be a finite symmetric generating set for $\Gamma$, and let $\nu$ be a symmetric probability measure with support $A$. The Laplacian $\Delta_{\nu}$ is defined as \begin{equation}\label{eq: definition of symmetric group laplacian}\Delta_{\nu}=\frac{1}{2}\sum \limits_{a\in A}\nu(a)(2-a-a^{*})=\frac{1}{2}\sum \limits_{a\in A}\nu(a)(1-a)(1-a)^{*}=1-\sum_{a\in A} \nu(a)a
\end{equation} If $\mu$ is a non-symmetric probability measure with finite support $A$, we define \begin{equation}\label{eq: definition of nonsymmetric group laplacian}
    \Delta_{\mu}:=1-\sum_{a\in A} \mu (a)a.
\end{equation}
We will use the following result of \cite{ozawapropertyt}.
\begin{theorem*}\cite[Main Theorem]{ozawapropertyt}
Let $\Gamma$ be a finitely generated group and $\nu$ a symmetric probability measure on $\Gamma$ whose support generates $\Gamma$. The group $\Gamma$ has Property (T) if and only if there exists a constant $\kappa>0$ such that $\Delta_{\nu}^{2}-\kappa\Delta_{\nu}\geq_{\RG{\Gamma}}0.$
\end{theorem*}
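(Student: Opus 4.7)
The plan is to apply the Main Theorem of \cite{ozawapropertyt} to a finite index subgroup of $\Gamma := G/\langle\langle\mathcal{R}\rangle\rangle$. Because $\mathcal{W}(R)$ generates a finite index subgroup of $G$, its image generates a finite index subgroup $\overline{H} \leq \Gamma$; Property (T) is inherited between $\overline{H}$ and $\Gamma$, so it is enough to prove $\overline{H}$ has (T). For this I will exhibit a symmetric probability measure $\nu$ on $\overline{H}$ supported on the images of $V(\Upsilon)$ and a constant $\kappa > 0$ with $\Delta_\nu^{2} - \kappa\Delta_\nu \geq_{\RG{\overline{H}}} 0$.

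I take $\mu(g) := deg(g)/N$ with $N := \sum_{w \in V(\Upsilon)} deg_{\Upsilon}(w)$, supported on the images of $V(\Upsilon)$ in $\overline{H}$. The hypothesis $deg(g) \leq (1+\epsilon)\,deg(g^{-1})$ says that $\mu$ is $\epsilon$-close to symmetric, and I set $\nu := \tfrac12(\mu + \mu^{*})$; the Laplacians $\Delta_\mu,\Delta_\nu$ then differ by a term of order $\epsilon$ in the $\|\cdot\|_{1}$-norm.

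The relations produce, for each edge, a sum-of-squares identity in $\RG{\overline{H}}$. For a triple $r = (x_r, y_r, z_r)$ the relation $\overline{x_r y_r z_r} = 1$ gives $\overline{x_r}\,\overline{y_r} = \overline{z_r^{-1}}$ in $\overline{H}$, hence for the edge $e = (x_r, z_r^{-1})$:
$$\overline{u_e} - \overline{v_e} = \overline{x_r}\bigl(1 - \overline{y_r}\bigr),\qquad (\overline{u_e} - \overline{v_e})(\overline{u_e} - \overline{v_e})^{*} = 2\,\overline{x_r}\,\Delta_{\tilde\nu_r}\,\overline{x_r}^{-1},$$
where $\tilde\nu_r$ is the uniform symmetric measure on $\{\overline{y_r}, \overline{y_r}^{-1}\}$. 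A mirror identity holds for the edge $(x_r^{-1}, z_r)$. Each such expression is manifestly in $\Sigma^{2}\RG{\overline{H}}$.

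To turn these local identities into a global inequality, I assemble them into a matrix-valued positivity statement governed by the normalised Laplacian of $\Upsilon$. Consider the matrix $M \in M_{|V(\Upsilon)|}(\RG{\overline{H}})$ whose $(u,v)$-entry is the group-ring element $\overline{uv^{-1}}$ weighted by the number of edges between $u$ and $v$, normalised as in $D(\Upsilon)^{-1/2}A(\Upsilon)D(\Upsilon)^{-1/2}$. Pairing $M$ against the Perron vector $(\sqrt{deg(w)})_{w}$ recovers a multiple of $\Delta_\nu$, while the Courant--Fischer characterisation of $\lambda_{1}(L(\Upsilon))$ bounds $M$ away from this rank-one direction. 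Pushing this spectral gap through the edge-wise sum-of-squares identity, and then accounting for (i) the repetition bound $rep(g) \leq (1+\delta)\,deg(g)$, which controls how efficiently $\sum_{r} \overline{x_r}\Delta_{\tilde\nu_r}\overline{x_r}^{-1}$ reconstructs $\Delta_\nu$, and (ii) the replacement of the near-symmetric $\mu$ by its symmetrisation $\nu$, which via a Cauchy--Schwarz-type estimate loses a factor $(2 - 4\epsilon^{2})/2$ in the gap, yields the desired $\Delta_\nu^{2} - \kappa\Delta_\nu \geq_{\RG{\overline{H}}} 0$ with $\kappa > 0$ under exactly the hypothesis $\lambda_{1}(L(\Upsilon)) > (1+\delta)/(2 - 4\epsilon^{2})$.

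The main obstacle is this final assembly: realising $\lambda_{1}(L(\Upsilon))$ as a group-ring lower bound for the appropriate "gap" between $\Delta_\nu^{2}$ and $\Delta_\nu$, and tracking the constants $1+\delta$ and $2-4\epsilon^{2}$ through the bookkeeping set up by the multiplicity function $\Phi$ (several vertices of $\Upsilon$ may evaluate to the same element of $\overline{H}$) and the repetition function $rep$. The edge-wise identity above is elementary; the work lies in showing that nothing better than the threshold $(1+\delta)/(2-4\epsilon^{2})$ is actually forced by these bookkeeping constants, and that the Courant--Fischer inequality can be applied at the level of $\RG{\overline{H}}$-valued matrices without losing additional factors.
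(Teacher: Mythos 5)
Your proposal does not prove the statement it was assigned. The statement is Ozawa's characterisation of Property (T) itself --- that a finitely generated group $\Gamma$ has (T) if and only if $\Delta_{\nu}^{2}-\kappa\Delta_{\nu}\in\Sigma^{2}\RG{\Gamma}$ for some $\kappa>0$ --- which this paper does not prove but imports from \cite{ozawapropertyt}. Your first sentence is ``apply the Main Theorem of \cite{ozawapropertyt}'', so the result to be established is used as a black box, and what you actually sketch is a derivation of the paper's spectral criterion (Theorem \ref{thm: directed new spectral criterion}). Relative to the assigned statement this is circular, and neither direction of the equivalence is addressed. The easier direction ($\Leftarrow$) requires the observation that a sum-of-squares identity in $\RG{\Gamma}$ passes to every unitary representation $\pi$, giving $\pi(\Delta_{\nu})^{2}\geq\kappa\,\pi(\Delta_{\nu})$ and hence a spectral gap of the positive operator $\pi(\Delta_{\nu})$ at $0$, together with the identity $\langle\pi(\Delta_{\nu})u,u\rangle=\tfrac12\sum_{a}\nu(a)\Vert\pi(a)u-u\Vert^{2}$, which shows $\ker\pi(\Delta_{\nu})$ is exactly the space of invariant vectors; the harder direction ($\Rightarrow$) is Ozawa's actual contribution, namely that the spectral-gap inequality, which a priori holds only in the full group $C^{*}$-algebra, admits a certificate by finitely many squares inside the algebraic group ring, and this rests on a closedness argument for the cone of sums of squares coming from the augmentation ideal. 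None of these ideas appear in your write-up.

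As a secondary remark: read instead as a proposal for Theorem \ref{thm: directed new spectral criterion}, your outline parallels the paper's argument (which follows Ozawa's Example 5): build the nearly symmetric measures from $deg$ and $rep$, deduce $\Delta_{\mu}\Delta_{\overline{\mu}}-\Delta_{\mu}-\Delta_{\overline{\mu}}+\lambda_{1}(\Upsilon)^{-1}\Delta_{\xi}\geq_{\RG{\Gamma}}0$ from positivity of $\lambda^{-1}\Lambda+P-I$, symmetrise to $\nu=(\mu+\overline{\mu})\slash 2$, and absorb the error term $(\Delta_{\mu}-\Delta_{\overline{\mu}})(\Delta_{\mu}-\Delta_{\overline{\mu}})\leq_{\RG{\Gamma}}16\epsilon^{2}\Delta_{\nu}$ to reach the threshold $(1+\delta)\slash(2-4\epsilon^{2})$. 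But even for that theorem the step you defer as ``the main obstacle'' --- converting the edgewise identities into the global group-ring inequality weighted by $\lambda_{1}(\Upsilon)^{-1}$ --- is precisely the substantive content, so the proposal would be incomplete there as well.
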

We may now turn to proving Theorem \ref{thm: directed new spectral criterion}. Given an element $\xi\in\RG{\Gamma}$, its $1$-norm is $\vert\vert \xi\vert\vert_{1}=\sum \vert \xi(g)\vert.$ Define $$\I{\Gamma}:=\left\{\xi\in \RG{\Gamma}\;:\;\sum \xi (g)=0\right\},$$ and $$\I{\Gamma}^{h}=\{\xi\in \I{\Gamma}\;:\;\xi^{*}=\xi\}.$$

\begin{proof}[Proof of Theorem \ref{thm: directed new spectral criterion}]
For notational ease, let $\Upsilon=\Upsilon(G,\mathcal{L}^{G},R)$ and $\Gamma=G\slash \langle \langle \mathcal{R}\rangle \rangle$. Suppose $\Upsilon = (V,E)$. The proof of the theorem proceeds in two parts. Firstly, we find: a finite symmetric generating set, $A$, for $\Gamma$; two probability measures, $\mu$, $\overline{\mu},$ on $A$ that will (often) not be symmetric; and a probability measure $\xi$ on $A$ that is symmetric, such that $\Delta_{\mu}\Delta_{\overline{\mu}}-\Delta_{\mu}-\Delta_{\overline{\mu}}+\lambda_{1}(\Upsilon)^{-1}\Delta_{\xi}\geq_{\RG{\Gamma}} 0.$ We then show how to construct a symmetric probability measure $\nu$ such that $\vert \vert \Delta_{\mu}-\Delta_{\nu}\vert \vert_{1}\leq 2\epsilon$. Finally, we show that we can replace $\Delta_{\mu}$, $\Delta_{\overline{\mu}}$, and $\Delta_{\xi}$ by $\Delta_{\nu}$ in the above equation, at the expense of replacing the constant $\lambda_{1}(\Upsilon)^{-1}$ by the constant $(1+\delta)\lambda_{1}(\Upsilon)^{-1}-4\epsilon^{2}.$ The first part of the proof proceeds extremely similarly to \cite[Example 5]{ozawapropertyt}. 

We may assume that $V$ generates $G$. By assumption, $V$ generates a finite index subgroup of $G$ and furthermore, $\mathcal{R}\subseteq \langle V\rangle$. As Property (T) is preserved under finite index extensions and quotients, we may replace $G \slash \langle \langle \mathcal{R}\rangle\rangle  $ by the subgroup $\langle V\rangle \slash \langle \langle \mathcal{R}\rangle\rangle $ if $[G:\langle V\rangle]>1$. 

Let $\phi:G\twoheadrightarrow \Gamma $ be the quotient map, and $A=\{\phi(\overline{w}): w\in V\}$, so that $A$ is a symmetric generating set for $\Gamma$. Let $\mu'$ be the probability measure on $V:=V(\Upsilon)$ given by $\mu' (w)=deg(w)\slash \vert E\vert .$ Let $\mu$ be the probability measure on $A$ given by $$\mu(a):=\sum\limits_{\substack{w\in V\\\phi(\overline{w})=_{\Gamma}a}}deg(w)\slash \vert E\vert=\sum\limits_{\substack{w\in V\\\phi(\overline{w})=_{\Gamma}a}}\mu' (w).$$

Let $\overline{\mu}$ be the probability measure on $A$ defined by $\overline{\mu}(a):=\mu (a^{-1}).$ Next, we define the probability measure $\xi$ on $A$ by 
\begin{align}\label{eq: def of omega}
\xi(a)&:=\sum\limits_{\substack{v\in V\\\phi(\overline{v})=_{\Gamma}a}}\dfrac{rep(v)}{\vert E\vert}\nonumber\\
&=\sum\limits_{\substack{v\in V\\\phi(\overline{v})=_{\Gamma}a}}\dfrac{\vert \{(w^{-1},v)\in E\;:\;(v,u,w)\in R\}\vert+\vert \{(v,w^{-1})\in E\;:\;(v,u^{-1},w)\in R\}\vert}{\vert E\vert }\end{align}

By our assumptions in the statement of the theorem, we have that for all $a\in A$:
\begin{align}\label{eq: omega, mu inequalities}
    \begin{split}
     \xi (a)&\leq (1+\delta)\mu (a),\;\xi (a)\leq (1+\delta)\overline{\mu} (a),\\\;\overline{\mu}(a)&\leq (1+\epsilon)\mu(a),\;\mu (a)\leq (1+\epsilon)\overline{\mu}(a).   
    \end{split} 
\end{align}

Let $\sigma$ be the uniform probability measure on $E$: we define $d:L^{2}(V,\mu')\rightarrow L^{2}(E,\sigma)$ by $(d\zeta)(x,y)=\zeta (y)-\zeta(x)$. Consider the Laplacian $\Lambda:=d^{*}d\slash 2$. As a matrix, $\Lambda=L(\Upsilon)$. In particular, $\Lambda$ has real eigenvalues, all lying in $[0,2]$. Let $\lambda =\lambda_{1}(\Lambda)=\lambda_{1}(\Upsilon)$. Let $P$ be the orthogonal projection from $L^{2}(V,\sigma )$ to constant functions, and $I$ be the identity operator. It is standard (see e.g. \cite[Example 5]{ozawapropertyt}) that $\lambda^{-1}\Lambda+P-I\geq 0 $, so that there is an operator $T$ on $L^{2}(V,\mu')$ with $\lambda^{-1}\Lambda+P-I=T^{*}T$. Given $v,w\in V$ and $O$ an operator on $L^{2}(V,\mu')$, let $O_{v,w}:=\langle O\delta_{v},\delta_{w}\rangle$.

In $\RG{\Gamma}$, we have that:
\begin{align}\label{eq: left hand side of sum is >=0}
   & \sum\limits_{v,w\in V}(\lambda^{-1}\Lambda_{v,w}+P_{v,w}-I_{v,w})\phi(\overline{w})^{-1}\phi(\overline{v})\nonumber\\
   &=_{\RG{\Gamma}} \sum\limits_{v,w\in V}\langle T^{*}T\delta_{v},\delta_{w}\rangle\phi(\overline{w})^{-1}\phi(\overline{v}\nonumber)\\
   & =_{\RG{\Gamma}}\sum\limits_{v,w\in V}\langle T\delta_{v},T\delta_{w}\rangle\phi(\overline{w})^{-1}\phi(\overline{v})\nonumber\\
   (\dagger)\;\;& =_{\RG{\Gamma}}\sum\limits_{v,w\in V}\bigg(\sum_{i\in V}   \mu(i)'^{-1}\langle T\delta_{v},\delta_{i}\rangle\langle \delta_{i},T\delta_{w}\rangle\bigg)\phi(\overline{w})^{-1}\phi(\overline{v})\nonumber\\
   &=_{\RG{\Gamma}}\sum\limits_{v,w\in V}\sum\limits_{i\in V}\mu(i)^{-1}T_{i,g}T_{i,h}\phi(\overline{w})^{-1}\phi(\overline{v})\nonumber\\
    &=_{\RG{\Gamma}}\sum\limits_{i\in V }\zeta_{i}^{*}\zeta_{i}\geq_{\RG{\Gamma}}0,\end{align}
where we have defined $$\zeta_{i}=\mu'(i)^{-1\slash 2}\sum \limits_{v\in V}T_{i,v}\phi(\overline{v})\in \RG{\Gamma}.$$ The inequality $(\dagger)$ follows by definition of the inner product on $L^{2}(V,\mu')$.

We calculate that:
\begin{align*}
    \Lambda_{v,v}&=\dfrac{1}{2\vert E\vert}\sum_{(x,y)\in E}(\delta_{v}(x)-\delta_{v}(y))^{2}\\
    &=\sum_{y\neq v,(v,y)\in E}\dfrac{1}{2\vert E\vert }+\sum_{y\neq v,(y,v)\in E}\dfrac{1}{2\vert E\vert }\\
    &=\dfrac{\left\vert \{(v,y)\in E\} \right\vert-\left\vert \{(v,v)\in E\} \right\vert}{\vert E\vert }\\
    &=\mu'(v)-\dfrac{\left\vert \{(v,v)\in E\} \right\vert}{\vert E\vert }
\end{align*}

For $v\neq w$, $\Lambda_{v,w}=-\vert E\vert ^{-1}$ if $(v,w)\in E$, and $\Lambda_{v,w}=0$ otherwise.  Hence:
\begin{align}\label{eq: Relating sum Lvw to Delta omega}
   &\sum\limits_{v,w\in V}\lambda^{-1}\Lambda_{v,w}\phi(\overline{w})^{-1}\phi(\overline{v}\nonumber)\\
   &=_{\RG{\Gamma}}\sum \limits_{v\in V}\mu'(v)\phi(\overline{v})^{-1}\phi(\overline{v})-\sum \limits_{(v,w)\in E}\dfrac{\phi(\overline{w})^{-1}\phi(\overline{v})}{\vert E\vert }\nonumber\\
      &=_{\RG{\Gamma}}\sum \limits_{v\in V}\mu'(v)-\sum \limits_{(v,w)\in E}\dfrac{\phi(\overline{w})^{-1}\phi(\overline{v})}{\vert E\vert }\nonumber\\
   (\ddag)& =_{\RG{\Gamma}}1-\dfrac{1}{\vert E\vert }\sum_{a\in A}\sum\limits_{\substack{u\in V\\\phi(\overline{u})=_{\Gamma}a}}\vert\{(y^{-1},x)\in E\;:\;(x,u,y)\in R\}\vert a\nonumber\\
   &\hspace*{35 pt}-\dfrac{1}{\vert E\vert }\sum_{a\in A}\sum\limits_{\substack{u\in V\\\phi(\overline{u})=_{\Gamma}a}}\vert \{(x,y^{-1})\in E\;:\;(x,u^{-1},y)\in R\}\vert a\nonumber\\
   &=_{\RG{\Gamma}}\Delta_{\xi}\;\;(\mbox{by equations }(\ref{eq: definition of nonsymmetric group laplacian})\mbox{ and }(\ref{eq: def of omega})).
\end{align}
The equation $(\ddag)$ follows since $(x,u,y)\in R$ implies that $xuy=_{\Gamma}1$, so that $a=\phi(u)=\phi(x)^{-1}\phi(y)^{-1}$, and hence $a$ arises from the edge $(y^{-1},x).$ 
Similarly, we have that $P_{v,w}=\mu'(v)\mu'(w),$ and $ I_{v,w}=\delta_{v,w}\mu'(v).$ Therefore:
\begin{align}\label{eq: calculation LHS of P and I}
    &\sum\limits_{v,w\in V}(P_{v,w}-I_{v,w})\phi(\overline{w})^{-1}\phi(\overline{v})\nonumber\\
    &=_{\RG{\Gamma}}\sum\limits_{v,w\in V}\mu'(v)\mu'(w)\phi(\overline{w})^{-1}\phi(\overline{v})-\sum\limits_{v\in V}\mu'(v)\phi(\overline{v})^{-1}\phi(\overline{v})\nonumber\\
     &=_{\RG{\Gamma}}\sum\limits_{v,w\in V}\mu'(v)\mu'(w)\phi(\overline{w})^{-1}\phi(\overline{v})-\sum\limits_{v\in V}\mu'(v)\nonumber\\
&=_{\RG{\Gamma}}\bigg(\sum\limits_{v\in V}\mu'(v)\phi(\overline{v})\bigg)\bigg(\sum\limits_{v\in V}\mu'(v^{-1})\phi(\overline{v})\bigg)-1\nonumber\\
&=_{\RG{\Gamma}}(1-\Delta_{\mu})(1-\Delta_{\overline{\mu}})-1\nonumber\\
&=_{\RG{\Gamma}}\Delta_{\mu}\Delta_{\overline{\mu}}-\Delta_{\mu}-\Delta_{\overline{\mu}}.
\end{align}
Therefore, by equations $(\ref{eq: left hand side of sum is >=0})$, $(\ref{eq: Relating sum Lvw to Delta omega})$, and $(\ref{eq: calculation LHS of P and I})$:
$$\Delta_{\mu}\Delta_{\overline{\mu}}-\Delta_{\mu}-\Delta_{\overline{\mu}}+\lambda^{-1}\Delta_{\xi}\geq_{\RG{\Gamma}}0.$$
Note that $\Delta_{\overline{\mu}}:=_{\RG{\Gamma}}\Delta_{\mu}^{*}$. Since $\xi$ is symmetric, by taking the involution, we see that
$$\Delta_{\overline{\mu}}\Delta_{\mu}-\Delta_{\mu}-\Delta_{\overline{\mu}}+\lambda^{-1}\Delta_{\xi}\geq_{\RG{\Gamma}}0.$$ 
Therefore: 
\begin{equation}\label{eq: first symmetric inequality}
    \dfrac{1}{2}\bigg(\Delta_{\overline{\mu}}\Delta_{\mu}+\Delta_{\mu}\Delta_{\overline{\mu}}\bigg)-\Delta_{\mu}-\Delta_{\overline{\mu}}+\lambda^{-1}\Delta_{\xi}\geq_{\RG{\Gamma}}0.
\end{equation}

Let $\nu:=(\mu+\overline{\mu})\slash 2$, so that $\nu$ is a symmetric probability measure on $A$. We see that $\Delta_{\nu}=_{\RG{\Gamma}}(\Delta_{\mu}+\Delta_{\overline{\mu}})\slash 2$. Furthermore: 
\begin{align}\label{eq: delta nu squared}
    \Delta_{\nu}^{2}&=_{\RG{\Gamma}}\dfrac{1}{4}(\Delta_{\mu}+\Delta_{\overline{\mu}})^{2}=_{\RG{\Gamma}}\dfrac{1}{4}\bigg(\Delta_{\mu}^{2}+\Delta_{\overline{\mu}}^{2}+\Delta_{\mu}\Delta_{\overline{\mu}}+\Delta_{\overline{\mu}}\Delta_{\mu}\bigg)\nonumber\\
    &=_{\RG{\Gamma}}\dfrac{1}{2}\bigg(\Delta_{\mu}\Delta_{\overline{\mu}}+\Delta_{\overline{\mu}}\Delta_{\mu}\bigg)+\dfrac{1}{4}\bigg(\Delta_{\mu}^{2}+  \Delta_{\overline{\mu}}^{2}-\Delta_{\mu}\Delta_{\overline{\mu}}-\Delta_{\overline{\mu}}\Delta_{\mu}\bigg)\nonumber\\
    &=_{\RG{\Gamma}}\dfrac{1}{2}\bigg(\Delta_{\mu}\Delta_{\overline{\mu}}+\Delta_{\overline{\mu}}\Delta_{\mu}\bigg)+\dfrac{1}{4}\bigg(\Delta_{\mu}-\Delta_{\overline{\mu}}\bigg)\bigg(\Delta_{\mu}-\Delta_{\overline{\mu}}\bigg).
\end{align}

It is easy to see that $(1+\delta)\Delta_{\nu}\geq_{\RG{\Gamma}} \Delta_{\xi},$ since $(1+\delta)\mu(a)\geq \xi(a)$ for all $a\in A$, and similarly for $\overline{\mu}(a)$. Let $\Sigma=(\Delta_{\mu}-\Delta_{\overline{\mu}})(\Delta_{\mu}-\Delta_{\overline{\mu}})\in \I{\Gamma}.$ By equations $(\ref{eq: first symmetric inequality})$ and $(\ref{eq: delta nu squared})$, we have $$\Delta_{\nu}^{2}-2\Delta_{\nu}+\lambda^{-1}\Delta_{\xi}\geq_{\RG{\Gamma}} -\Sigma\slash 4,$$ and hence \begin{equation}\label{eq: second symmetric inequality}
    \Delta_{\nu}^{2}-(2-(1+\delta)\lambda^{-1})\Delta_{\nu}\geq_{\RG{\Gamma}} -\Sigma\slash 4.
\end{equation} To apply Ozawa's criterion, it therefore remains to analyze $\Sigma.$ We note that, by e.g. \cite[Lemma 2]{ozawapropertyt}, for any $a,b\in \Gamma$: 
\begin{equation}\label{eq: Ozawas lemma}
    (2-ab-(ab)^{*})\leq_{\RG{\Gamma}} 2(2-a-a^{*})+2(2-b-b^{*}).
\end{equation} Now, $\Sigma^{*}=\Sigma$ (in particular $\Sigma(g)=\Sigma(g^{-1})$), and $\sum \Sigma (g)=0,$ so that $\Sigma\in\I{\Gamma}^{h}$. We may therefore write $$\Sigma = \dfrac{1}{2}\sum \limits_{a,b\in A}\Sigma (ab) (2-(ab)^*-ab).$$ This is a standard characterization of such elements, as the cone $\I{\Gamma}^{h}$ is spanned by the elements $2-x-x^{-1}$: see e.g. \cite{ozawapropertyt}. By comparing coefficients with the definition of $\Sigma$, we see that $$\Sigma(ab)=(\mu(a)-\mu(a^{-1}))(\mu(b)-\mu(b^{-1})).$$
Therefore, as $\mu(a)\leq (1+\epsilon)\mu(a^{-1})$ for all $a\in A$: 

\begin{align}
    \Sigma&=_{\RG{\Gamma}}\dfrac{1}{2}\sum\limits_{a,b\in A}(\mu(a)-\mu(a^{-1}))(\mu(b)-\mu(b^{-1}))(2-ab-(ab)^{*})\nonumber\\
    (\ddag \dagger)\;&\leq _{\RG{\Gamma}}\dfrac{1}{2}\sum\limits_{a,b\in A}2(\mu(a)-\mu(a^{-1}))(\mu(b)-\mu(b^{-1}))(2-a-a^{*})\nonumber\\
    &\;\;\;\;+\dfrac{1}{2}\sum\limits_{a,b\in A}2(\mu(a)-\mu(a^{-1}))(\mu(b)-\mu(b^{-1}))(2-b-b^*)\nonumber\\
    &=_{\RG{\Gamma}} 2\sum_{b\in A}(\mu(b)-\mu(b^{-1}))\sum_{a\in A}(\mu(a)-\mu(a^{-1}))(2-a-a^{*})\nonumber\\
    &\leq_{\RG{\Gamma}}2\sum_{b\in A}2\epsilon \min\{\mu (b),\mu (b^{-1})\} \sum_{a\in A}(\mu(a)-\mu(a^{-1}))(2-a-a^{*})\nonumber\\
     &\leq_{\RG{\Gamma}}4\epsilon \sum_{a\in A}(\mu(a)-\mu(a^{-1}))(2-a-a^{*})\nonumber\\
    (\ddag \ddag)&\leq_{\RG{\Gamma}}4\epsilon\sum_{a\in A}2\epsilon \min\{\mu (a),\mu (a^{-1})\}(2-a-a^{*})\nonumber\\
   (\ddag \ddag \dagger)&\leq_{\RG{\Gamma}}8\epsilon^{2}\sum_{a\in A}\nu(a)(2-a-a^{*}) \nonumber\\
    &=_{\RG{\Gamma}}16\epsilon^{2}\Delta_{\nu}.
\end{align}

We have the (in)equality $(\ddag \dagger)$ by equation $(\ref{eq: Ozawas lemma})$, $(\ddag \ddag)$ by equation $(\ref{eq: omega, mu inequalities})$, and $(\ddag \ddag\dagger )$ by the definition of $\nu$.
Dividing by $-1\slash 4$, this implies that 
\begin{equation}\label{eq: Sigma inequality}
    -\Sigma\slash 4\geq -4\epsilon^{2}\Delta_{\nu}.
\end{equation}
Hence, by equations $(\ref{eq: second symmetric inequality})$ and $(\ref{eq: Sigma inequality})$, we have that:
\begin{align*}
\Delta_{\nu}^{2}-(2-(1+\delta)\lambda^{-1})\Delta_{\nu} \geq_{\RG{\Gamma}}-\Sigma\slash 4 \geq_{\RG{\Gamma}}-4\epsilon^{2}\Delta_{\nu},
\end{align*}
so that $$\Delta_{\nu}^{2}-(2-4\epsilon^{2}-(1+\delta)\lambda^{-1})\Delta_{\nu}\geq_{\RG{\Gamma}}0.$$ Set $\kappa = 2-4\epsilon^{2}-(1+\delta)\lambda^{-1};$ since $\lambda>(1+\delta)\slash(2-4\epsilon^{2}),$ we have that $\kappa >0$. Therefore, $\nu$ is a symmetric probability measure on $\Gamma$, with finite support that generates $\Gamma$, and there exists $\kappa>0$ such that $$\Delta_{\nu}^{2}-\kappa \Delta_{\nu} \geq_{\RG{\Gamma}}0.$$ Hence, by \cite{ozawapropertyt}, $\Gamma=G\slash \langle \langle R\rangle \rangle$ has Property (T).
\end{proof}

This allows us to deduce the following.

\begin{cor}\label{cor: directed new spectral criterion}
Let $G=\langle S\;\vert\; T\rangle$ be a hyperbolic group. For each $l\geq 1$, choose a set of triples $R_{l}=\{r=(x_{r},y_{r},z_{r})\}_{r}$ in $\mathcal{L}^{G}$ such that: each $x_{r}$, $y_{r}$, $z_{r}$ has length between least $l\slash 3-\omega (l)\slash 3$ and $l\slash 3+2$; and $x_{r}y_{r}z_{r}$ is reduced without cancellation and lies in $\mathcal{L}^{G}_{l,\omega}$. Let $\mathcal{R}_{l}=\{x_{r}y_{r}z_{r}\;:\;r\in R\},$ and $G\slash \langle \langle \mathcal{R}_{l}\rangle \rangle $ be a series of quotients of $G$. Suppose that $\mathcal{L}_{\lfloor l\slash 3\rfloor}^{G}\subseteq V(\Upsilon(G,\mathcal{L}^{G},R_{l}))$, and for all $g \in V(\Upsilon(G,\mathcal{L}^{G},R_{l}))$: $$ deg (g)=(1+o_{l}(1))deg(g^{-1})= (1+o_{l}(1)) rep (g), $$
where the $o_{l}(1)$ term is independent of $g$. If $\lim_{l\rightarrow\infty}\lambda_{1}(\Upsilon(G,\mathcal{L}^{G},R_{l}))>1\slash 2$, then $G\slash \langle \langle \mathcal{R}_{l}\rangle \rangle$ has Property (T) for all sufficiently large $l$.
\end{cor}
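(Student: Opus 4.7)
The plan is to verify, for all sufficiently large $l$, the hypotheses of Theorem \ref{thm: directed new spectral criterion} applied to the triple system $R_{l}$, and to invoke that theorem directly. The corollary is essentially a packaging of the main spectral criterion into an asymptotic form, so no genuinely new input is required.

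First, I would fix the spectral gap. Since $\lim_{l\rightarrow\infty}\lambda_{1}(\Upsilon(G,\mathcal{L}^{G},R_{l}))>1\slash 2$, there exist a threshold $L_{0}$ and a constant $\lambda_{\infty}>1\slash 2$ with $\lambda_{1}(\Upsilon(G,\mathcal{L}^{G},R_{l}))\geq \lambda_{\infty}$ for all $l\geq L_{0}$. The function $(\epsilon,\delta)\mapsto (1+\delta)\slash(2-4\epsilon^{2})$ is continuous at $(0,0)$ with value $1\slash 2$, so I may choose constants $\epsilon,\delta>0$ small enough that
\[
\frac{1+\delta}{2-4\epsilon^{2}}<\lambda_{\infty}.
\]

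Next, I would use the hypothesis that $deg(g)=(1+o_{l}(1))deg(g^{-1})=(1+o_{l}(1))rep(g)$ with $o_{l}(1)$ \emph{independent of} $g\in V(\Upsilon(G,\mathcal{L}^{G},R_{l}))$. This uniformity allows me to pick $L_{1}\geq L_{0}$ such that for every $l\geq L_{1}$ and every $g\in V(\Upsilon(G,\mathcal{L}^{G},R_{l}))$ the inequalities
\[
deg(g)\leq (1+\epsilon)deg(g^{-1}),\qquad rep(g)\leq (1+\delta)deg(g)
\]
hold with the constants $\epsilon,\delta$ fixed above. This is the one step where the uniform $o_{l}(1)$ assumption is essential: without it, one could not pass from a pointwise asymptotic to the uniform bound Theorem \ref{thm: directed new spectral criterion} requires.

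Third, I would check that $\mathcal{W}(R_{l})$ generates a finite-index subgroup of $G$. Since $V(\Upsilon(G,\mathcal{L}^{G},R_{l}))=\mathcal{W}(R_{l})\cup \mathcal{W}(R_{l})^{-1}$, the hypothesis $\mathcal{L}^{G}_{\lfloor l\slash 3\rfloor}\subseteq V(\Upsilon(G,\mathcal{L}^{G},R_{l}))$ exhibits $\mathcal{L}^{G}_{\lfloor l\slash 3\rfloor}$ as a subset of $\langle \mathcal{W}(R_{l})\rangle$; by property 2 of the language $\mathcal{L}^{G}$, this set already generates a finite-index subgroup of $G$, and hence so does $\mathcal{W}(R_{l})$.

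With all three hypotheses verified for $l\geq L_{1}$, together with the strict inequality $\lambda_{1}(\Upsilon(G,\mathcal{L}^{G},R_{l}))\geq \lambda_{\infty}>(1+\delta)\slash(2-4\epsilon^{2})$, Theorem \ref{thm: directed new spectral criterion} yields Property (T) for $G\slash \langle\langle \mathcal{R}_{l}\rangle\rangle$. The only subtlety, as indicated above, is that the uniformity of the $o_{l}(1)$ term in $g$ is what lets the pointwise control feed into a single pair $(\epsilon,\delta)$ that simultaneously works for every vertex of $\Upsilon$; everything else is a routine continuity argument on the bound in Theorem \ref{thm: directed new spectral criterion}.
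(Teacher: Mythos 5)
Your proof is correct and follows essentially the same route as the paper's: choose $\epsilon,\delta$ small enough using continuity of $(1+\delta)/(2-4\epsilon^{2})$ at $(0,0)$, use the uniform $o_{l}(1)$ hypothesis to get the degree and multiplicity bounds for all $g$ simultaneously at large $l$, observe that $\mathcal{L}^{G}_{\lfloor l/3\rfloor}\subseteq V(\Upsilon)$ forces $\langle\mathcal{W}(R_{l})\rangle$ to have finite index, and invoke Theorem \ref{thm: directed new spectral criterion}. The only cosmetic difference is that you spell out the threshold $L_{0}$ and constant $\lambda_{\infty}$ explicitly rather than just naming the limit.
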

\begin{proof}
Since $\lambda=\lim_{l\rightarrow \infty}\lambda_{1}(\Upsilon(G,\mathcal{L}^{G},R_{l}))>1\slash 2,$ we may choose $\epsilon, \delta >0$ such that 
$$\lambda>(1+\delta)\slash (2-4\epsilon^{2}).$$ Then $$\lambda_{1}(\Upsilon(G,\mathcal{L}^{G},R_{l}))>(1+\delta)\slash (2-4\epsilon^{2})$$ for all sufficiently large $l$. Similarly, by the assumptions of the corollary, we may assume that for all sufficiently large $l$ and for all $g\in V(\Upsilon(G,\mathcal{L}^{G},R_{l}))$: $ deg (g)\leq(1+\epsilon)deg(g^{-1})$ and $rep (g)\leq(1+\delta)deg (g).$ Since $\mathcal{L}^{G}_{\lfloor l\slash 3\rfloor}\subseteq V(\Upsilon(G,\mathcal{L}^{G},R_{l}))$, we see that $\mathcal{W}(R)$ generates a finite index subgroup of $G$. Applying Theorem \ref{thm: directed new spectral criterion} allows us to conclude the result.
\end{proof}

%----------------------------------------------------------------------------------------
%   Counting geodesics in hyperbolic groups
%----------------------------------------------------------------------------------------
%\input{Sections/Counting_geodesics_in_hyperbolic_groups}

\section{Counting geodesics in hyperbolic groups}\label{sec: counting geodesics in hyperbolic groups}
Now that we have a spectral criterion for Property (T), we have two remaining tasks. Firstly, we need to find our language $\mathcal{L}^{G}$, and secondly, we have to analyse the resulting graphs $\Upsilon$. We turn to the theory of geodesics in hyperbolic groups. We wish to encode the geodesics in a hyperbolic group as an easily understood dynamical system. The most natural way to do this is to use \emph{Markov chains} and \emph{subshifts}. The reader should see, for example, \cite{calegari2013ergodic} for an in-depth discussion of these concepts. Let $G=\langle S\;\vert\;T\rangle$ be a finite presentation of a non-elementary hyperbolic group (we may assume that $S$ is inverse-closed). Assign an ordering to $S$, and extend this to the \emph{lexicographical ordering} of words in $S^{*}$. 

\begin{definition}
A \emph{finite state automaton} (\emph{FSA}) is a finite directed graph with a single distinguished start vertex $v_{0}$, with edges are labelled by elements of $S$ such that for each vertex $v$ and each $s\in S$ there is at most one edge incident to $v$ bearing the label $s$. Some of the vertices are marked as accept vertices. The \emph{language parameterized} by the FSA is the set of words that can be read by starting at $v_{0}$ and concatenating edge labels along a path that ends in an accept state.
\end{definition}
The automatic structure of hyperbolic groups was analysed in \cite{Cannon_combinatorial_structure_hyperbolic_groups}. We will make great use of the following.

\begin{definition}
A subset $\mathcal{L}\subseteq S^{*}$ is a \emph{combing for} $G$ if: $\mathcal{L}$ bijects with $G$ under evaluation; $\mathcal{L}$ is regular (i.e. there exists an FSA parameterizing $\mathcal{L}$); $\mathcal{L}$ is prefix closed, i.e. if $vw\in \mathcal{L}$, then $w\in \mathcal{L}$; and each word in $\mathcal{L}$ is geodesic in $G$.
\end{definition}
Given a language $\mathcal{L}$ and $l\geq 1$, $\mathcal{L}_{l}$ is the subset of $\mathcal{L}$ of words of length $l$. We define $\mathcal{L}_{\infty}$ to be the set of right-infinite words of $\mathcal{L}$. The following is one of the major results in the study of hyperbolic groups.

\begin{theorem*}\cite{Cannon_combinatorial_structure_hyperbolic_groups}
Let $G=\langle S\;\vert\;T\rangle$ be a finite presentation of a hyperbolic group, and let $\mathcal{L}^{geo}$ be the language of lexicographically first geodesics. Then $\mathcal{L}^{geo}$ is a combing for $G$.
\end{theorem*}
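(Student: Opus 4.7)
The plan is to verify the four defining conditions of a combing separately, observing that three are essentially immediate and that the real content lies in regularity.

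First I would dispose of the easy conditions. Since every element $g \in G$ admits at least one geodesic representative, the lexicographically smallest such representative exists and is unique; this gives the bijection with $G$ and the geodesicity condition by construction. For prefix-closure, if $uv \in \mathcal{L}^{geo}$ with $uv$ the lex-first geodesic for $\overline{uv}$, then $u$ must be the lex-first geodesic for $\overline{u}$: otherwise, replacing the prefix $u$ by a lex-smaller geodesic $u'$ of equal length would produce a geodesic $u'v$ for $\overline{uv}$ that is lex-smaller than $uv$, a contradiction.

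The substantive task is regularity. The plan is to build an FSA via the classical \emph{cone-type} construction. For $g \in G$, define the cone type $C(g) = \{h \in G : d(1, gh) = d(1,g) + d(1,h)\}$, the set of suffixes extending $g$ to a geodesic. The first key step is to prove Cannon's finiteness of cone types: using $\delta$-hyperbolicity and thin triangles, one shows that $C(g)$ is determined by the isomorphism type of the ball of radius $2\delta + 1$ around $g$ in the Cayley graph (with $g$ marked as basepoint and edges labelled), and there are only finitely many such marked balls. Given this, one constructs an FSA whose states are cone types, with an edge labelled $s$ from $C(g)$ to $C(gs)$ whenever $s \in C(g)$; this automaton parameterizes the language of all geodesics.

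To obtain $\mathcal{L}^{geo}$ rather than all geodesics, one refines the states by pairing each cone type with a bounded amount of extra data that records, for the current word $w$, which lexicographically-smaller continuations of proper prefixes of $w$ might still yield a geodesic to $\overline{w}$. Hyperbolicity again guarantees that only short (length $O(\delta)$) such alternative tails are relevant, so the auxiliary data is drawn from a finite set; the refined FSA accepts exactly $\mathcal{L}^{geo}$.

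The hard part is the finiteness of cone types: the local-to-global step that turns the thin-triangle inequality into the statement that $C(g)$ depends only on a bounded neighbourhood of $g$. Once that is in hand, the FSA construction and the lex-first refinement are routine finite-state bookkeeping. Since this theorem is Cannon's original result \cite{Cannon_combinatorial_structure_hyperbolic_groups} and is cited as a black box in the paper, a full proof is unnecessary here; the outline above is the standard route.
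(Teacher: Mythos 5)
The paper cites this theorem to \cite{Cannon_combinatorial_structure_hyperbolic_groups} and does not reprove it, so there is no in-paper argument to compare against; I will assess your outline on its own terms. Your sketch is the standard route and its key steps are sound: the bijection, geodesicity, and prefix-closure reductions are correct (the prefix-closure argument correctly observes that a lex-smaller geodesic prefix $u'$ of the same length would yield a lex-smaller geodesic $u'v$ representing the same element, contradicting minimality), and you correctly identify Cannon's finiteness of cone types as the essential lemma and describe the FSA on cone types. One small point worth making explicit in the cone-type FSA is why the transition $C(g)\mapsto C(gs)$ is well-defined given only the state $C(g)$ and the letter $s$: one checks that $h\in C(gs)$ if and only if $sh\in C(g)$ and $|sh|=1+|h|$, and the second condition is independent of $g$, so $C(gs)$ is indeed a function of $(C(g),s)$. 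Your description of the lex-first refinement is vaguer, but the underlying mechanism is right: the fellow-traveller property bounds the divergence between any two geodesics to nearby endpoints, so the extra bookkeeping data (tracking which lex-smaller prefixes could still compete) lives in a finite set, and the refined automaton recognises exactly the ShortLex normal forms. Since the paper uses Cannon's theorem as a black box, relying on the literature here is appropriate.
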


In particular, there is a directed graph $\Sigma = (V,E)$ parameterizing $\mathcal{L}^{geo}$ with a unique start vertex $v_{0}$ and all other vertices accept vertices \cite{Cannon_combinatorial_structure_hyperbolic_groups} (see e.g. \cite{calegari2013ergodic} for an overview of its construction). For $n\geq 1$, let $Y_{n}$ be the set of paths of length $n$ in $\Sigma$ starting at $v_{0}$. Define $X_{n}$ as the set of all paths of length $n$ in $\Sigma$. Note that  $Y_{n}\subseteq X_{n}$. Let $\mathcal{E}:\sqcup_{n}X_{n}\rightarrow G$ be the evaluation map, i.e. map a path to the corresponding path in $G$ starting at $e$ and take its endpoint in $G$. We can prove the following.
\begin{lemma}\label{lem: evaluation map is bounded to one}
The evaluation map $\mathcal{E}:\sqcup_{n}X_{n}\rightarrow G$ is bounded-to-one, with bound $\vert V(\Sigma)\vert.$ 
\end{lemma}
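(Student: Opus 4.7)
The plan is to bound $|\mathcal{E}^{-1}(g)|$ by $|V(\Sigma)|$ for every $g \in G$. First, since $\Sigma$ is a deterministic FSA, any path in $X_n$ is uniquely determined by its starting vertex together with the word it labels. Thus it suffices to prove that for each vertex $v \in V(\Sigma)$ and each $g \in G$, at most one path starting at $v$ reads a word that evaluates to $g$. Summing over the $|V(\Sigma)|$ possible starting vertices then gives the bound.

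To prove this per-vertex uniqueness, I would exploit the defining property of $\mathcal{L}^{geo}$: evaluation restricts to a bijection $\mathcal{L}^{geo} \leftrightarrow G$. Using that every vertex of $\Sigma$ is reachable from $v_0$ (a standard feature of Cannon's construction), fix $v \in V(\Sigma)$ and pick a word $u$ labelling a path from $v_0$ to $v$. If $w_1 \neq w_2$ both label paths starting at $v$ with $\overline{w_1} = \overline{w_2} = g$, then $uw_1$ and $uw_2$ both label paths from $v_0$ ending at accept vertices, so both lie in $\mathcal{L}^{geo}$, and both evaluate to $\overline{u}\,g$. Injectivity of evaluation on $\mathcal{L}^{geo}$ then forces $uw_1 = uw_2$ as words, hence $w_1 = w_2$, giving the required contradiction.

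A useful by-product of this prepending argument is that every word read along a path in $X_n$ is already a geodesic in $G$: since $uw$ lies in $\mathcal{L}^{geo}$ and is therefore geodesic, one has $|u| + |w| = |\overline{uw}| \leq |u| + |\overline{w}| \leq |u| + |w|$, which forces $|\overline{w}| = |w|$. Consequently only $X_{|g|}$ can contribute to $\mathcal{E}^{-1}(g)$, so the per-vertex count from the previous paragraph delivers the stated bound. The only subtlety I anticipate is to record that every vertex of $\Sigma$ is reachable from $v_0$; this is a standard property of Cannon's canonical automaton, which I would invoke without further elaboration.
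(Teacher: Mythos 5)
Your proof is correct and takes essentially the same route as the paper: prepend a path from $v_0$ to the shared starting vertex and invoke injectivity of evaluation on the combing $\mathcal{L}^{geo}$ to force the two paths to coincide, then sum over the $\vert V(\Sigma)\vert$ possible starting vertices. The geodesicity by-product is a pleasant extra observation but is not needed once per-vertex uniqueness is established.
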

\begin{proof}
Suppose that $\sigma, \sigma ' $ are two paths in $\Sigma$ starting from the same vertex, $v$, and evaluating to the same element in $G$. Let $\gamma$ be a path from the start vertex $v_{0}$ to $v$. Then $\gamma \sigma$ and $\gamma \sigma'$ are two paths in $Y_{\vert \gamma\vert +\vert \sigma\vert}$ evaluating to the same element. Since $\mathcal{L}^{geo}$ is a combing, we have that $\gamma\sigma=\gamma \sigma'$ as words, and so $\sigma =\sigma'$ as words. Therefore, for any $g\in G$ and any vertex $v$ of $\Sigma$, there is at most one path in $\Sigma$ starting at $v$ and evaluating to $g$. This implies that the evaluation map has preimage size bounded by $\vert V(\Sigma ) \vert$. 
\end{proof}
A \emph{component} $C$ of $\Sigma$ is a maximal subgraph of $\Sigma$ such that there is a directed path from any vertex $v$ in $C$ to any vertex $w$ in $C$. In this case, by an abuse of notation we can view a component $C$ as a FSA, $\Gamma_{C}$, by adding a unique start vertex $v_{0}'$ adjacent to every vertex in $C$ with each of these added edges having label the empty word, and make all other vertices accept states. Therefore, we can write $\mathcal{L}(C)=\mathcal{L}(\Gamma_{C}).$ In fact, $C$ is an \emph{irreducible (topological) Markov chain} (see e.g. \cite[p. 23]{calegari2013ergodic}).

For such a component, let $M(C)$ be its adjacency matrix, and let $M$ be the adjacency matrix of $\Sigma$. Let $\mu$ be the maximal real eigenvalue of $M$ and $\mu(C)$ the maximal real eigenvalue of $M(C)$. A component $C$ is \emph{maximal} if $\mu(C)=\mu$. We remark that in actuality $\mu = \exp\{\mathfrak{h}\}$ and the number of paths of length $n$ in $M(C)$ is $\eta (C)(1+o(1))\mu(C)^{n}$ for some constant $\eta(C)>0$.

Since each component $C$ is connected, by the Perron--Frobenius theorem, $M_{C}$ has a unique real eigenvalue $\mu (C)$ of maximal norm (with algebraic multiplicity $1$).

The following Lemma appears as a component of the proof of Proposition 6.2 in \cite{gekhtman2018counting}.

\begin{lemma}\label{lem: Cmax gens finite index}\cite[Proof of Proposition 6.2]{gekhtman2018counting}
There exists a maximal component, $C^{max}$, of $\Sigma$ such that for any $l\geq 1$, the set $\mathcal{L}_{l}(C^{max})$ generates a finite-index subgroup of $G$.
\end{lemma}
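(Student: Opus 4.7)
The plan is to argue by contradiction using a Perron--Frobenius growth comparison. Since $\Sigma$ is a finite graph it has only finitely many maximal components $C_1,\ldots,C_k$; for each $i$ set $H_i:=\langle \bigcup_l \mathcal{L}_l(C_i)\rangle\leq G$. Assume for contradiction that every $H_i$ has infinite index in $G$. My goal is to force $|S_l(G)|=o(\mu^l)$, contradicting the known asymptotics $|S_l(G)|=\eta(1+o(1))\mu^l$.

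The counting goes as follows. The block-triangular form of the adjacency matrix $M$ (ordered by the DAG of components) together with the strict spectral gap between maximal and non-maximal components shows that, for any $\beta(l)\to 0$ with $\beta(l)\, l\to\infty$, the number of length-$l$ paths in $\Sigma$ starting at $v_0$ that spend more than $\beta(l)\, l$ steps outside the maximal components is $o(\mu^l)$. A standard estimate then shows that all but $o(\mu^l)$ paths in $Y_l$ decompose as $p_{\mathrm{in}}\, q\, p_{\mathrm{out}}$, where $q$ is a length $l-o(l)$ path lying entirely in a single $C_i$ and $|p_{\mathrm{in}}|,|p_{\mathrm{out}}|=o(l)$. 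After passing through Lemma~\ref{lem: evaluation map is bounded to one} to convert paths into group elements, this gives a covering of the bulk of $S_l(G)$ by $k$ sets of the form $B_{o(l)}\cdot \mathcal{E}(\text{paths of length}\approx l \text{ in }C_i)\cdot B_{o(l)}$, each of which, up to bounded translation, sits inside $H_i$. A growth-rigidity theorem for hyperbolic groups (infinite-index quasiconvex subgroups have strictly smaller exponential growth than the ambient group, cf.\ Dal'bo--Otal--Peign\'e, Sambusetti) then bounds the contribution of each $C_i$ by $o(\mu^l)$, and summing over the finite list yields the desired contradiction. So some $C_i$, call it $C^{max}$, has $[G:H_{C^{max}}]<\infty$.

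It remains to upgrade this union-over-$l$ statement to the statement that $\langle \mathcal{L}_l(C^{max})\rangle$ has finite index in $G$ for every individual $l\geq 1$. Because $C^{max}$ is strongly connected with bounded diameter, any edge-label of $C^{max}$ can be sandwiched between bounded paths to yield an element of $\mathcal{L}_l(C^{max})$, and conversely; hence $\langle \mathcal{L}_l(C^{max})\rangle$ and $H_{C^{max}}$ agree up to a finite-index adjustment coming from the bounded connecting paths, so both have finite index in $G$.

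The main obstacle is the growth-rigidity step: one needs to know that $H_i$ is sufficiently well-behaved (e.g.\ quasiconvex in $G$) for the standard growth-gap results to apply. Since $H_i$ is generated by evaluations of paths in a strongly connected piece of the shortlex automaton, the generators come as geodesic words with a very controlled concatenation structure, so quasiconvexity should follow from the automaton's geometry; verifying this cleanly is the technical core of the argument.
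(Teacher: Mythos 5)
Your approach has a genuine gap at the growth-rigidity step, and you flag it yourself. To pass from ``$H_i$ has full exponential growth rate in $G$'' to $[G:H_i]<\infty$, you need a strict growth gap for infinite-index subgroups, and this is only known for \emph{quasiconvex} subgroups (Coornaert, Arzhantseva--Lysenok). Without quasiconvexity, full growth rate gives only co-amenability of $G/H$ (Matsuzaki--Yabuki--Jaerisch, Coulon--Dal'bo--Sambusetti), which is far weaker: the kernel of $F_2\twoheadrightarrow\mathbb{Z}$ has full growth rate $\log 3$ in $F_2$ but infinite index. Quasiconvexity of $H_i$ is not automatic from the structure of the automaton: the generators are geodesic words, but a product of two of them is geodesic only when the corresponding paths concatenate inside $C_i$, and finitely generated subgroups of hyperbolic groups are not in general quasiconvex. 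Saying this ``should follow from the automaton's geometry'' is precisely what needs proof; the paper does not supply one --- it cites the proof of Proposition 6.2 in Gekhtman--Taylor--Tiozzo, which is exactly the external result your argument would have to recover.

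A secondary issue is the upgrade to a fixed $l$. Sandwiching an edge label of $C^{max}$ between bounded paths of $C^{max}$ exhibits an element of $\mathcal{L}_l(C^{max})$, but this does not isolate the central edge label: you cannot cancel the flanking segments using only generators from $\mathcal{L}_l(C^{max})$ without a further argument, so the claimed commensurability of $\langle\mathcal{L}_l(C^{max})\rangle$ with $\langle\bigcup_{l'}\mathcal{L}_{l'}(C^{max})\rangle$ is asserted, not established. One would need, for instance, to compare pairs of length-$l$ paths in $C^{max}$ that differ in a single edge and argue directly with those.
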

We may now fix $C^{max}$ to be the desired maximal component in the above lemma and let $\mathcal{L}^{G}=\mathcal{L}(C^{max})$ be the language of $C^{max}.$

We wish to encode the geodesics in a hyperbolic group as an easily understood dynamical system. The most natural way to do this is to use \emph{shifts}.

\begin{definition}
Let $V$ be a finite set. The \emph{two-sided shift on $V$} is the topological space $X:=V^{\mathbb{Z}}$. We endow $V$ with the discrete topology and $X$ with the product topology. It comes endowed with the \emph{shift map} $\sigma:X\rightarrow X$ defined by $(\sigma(\vect{x}))_{i}=\vect{x}_{i+1}.$
\end{definition}
A \emph{subshift} is a closed $\sigma$-invariant subspace of $X$. We wish to look at a specific type of type of shift, called a \emph{subshift of finite type}.
\begin{definition}
Let $A$ be a $\vert V\vert\times \vert V\vert$ $0-1$ matrix. The \emph{subshift of finite type associated to $A$} is the subshift $X_{A}\subset X$ defined by 
$$X_{A}=\left\{\vect{x}\in X\;:\;A_{\vect{x}_{i},\vect{x}_{i+1}}=1\right\}.$$
\end{definition}
If we take $\Sigma = (V,E)$ to be a finite directed graph, the \emph{edge subshift} associated to $\Sigma$ is the set $X_{\Sigma}\subseteq (E)^{\mathbb{Z}}$ that is defined by the incidence matrix of $\Sigma$ (recall the incidence matrix $B$ is the $\vert E\vert\times \vert E\vert $ $0-1$ matrix, where $B_{i,j}=1$ if the endpoint of $e_{i}$ is equal to the startpoint of $e_{j}$, and is zero otherwise).

Let $\mathcal{Z}$ be the set of bi-infinite paths in $C^{max},$ i.e. it is the edge subshift of $C^{max}$. Since $C^{max}$ is connected, we see that $B$ (the incidence matrice) is irreducible, and furthermore, by the Perron-Frobenius theorem, it has unique largest real eigenvalue; using the growth rate of $\mathcal{Z}$, this eigenvalue is exactly $\mu$.

\begin{definition}[The Parry measure]
Let $\alpha,\beta$ be the left and right $\mu$-eigenvectors for $B$, normalised so that $\alpha\beta =1.$ Let $p_{i}=u_{i}v_{i}$, and $P_{i,j}=B_{i,j}\beta_{j}\slash \mu \beta_{i}$.

For an edge path $\gamma=(e_{i_{0}},e_{i_{1}},\hdots ,e_{i_{n}})$ in $C^{max}$ and $n\geq 0$, the \emph{cylinder set} $\mathcal{Z}[\gamma,m]$ is defined as

$$\mathcal{Z}[\gamma,m]=\left \{\underline{z}\in\mathcal{Z}\;:\;(z_{m},z_{m+1},\hdots ,z_{m+n})=\gamma\right\}.$$
These sets form a clopen basis for the subshift. The \emph{Parry measure} on $\mathcal{Z}$, $\nu$, defined in \cite{parry1964intrinsic}, is given by its definition on cyclinder sets
$$\nu( \mathcal{Z}[(e_{i_{0}},e_{i_{1}},\hdots ,e_{i_{n}}),m])=p_{i_{0}}P_{i_{0},i_{1}}\hdots P_{i_{n-1},i_{n}}=\dfrac{\alpha_{i_{0}}\beta_{i_{n}}}{\mu^{n}}.$$

\end{definition}

We now begin to analyse the number of paths in $C^{max}$, weighted by the Parry measure. First, we prove the following.

\begin{lemma}\label{lem: convergence of approximation}
For $\omega(L)$ a small-growing function, define $$B_{L,\omega}=\dfrac{1}{\omega(L)}\sum_{m=L-\omega}^{L}\dfrac{1}{\mu^{m}}B^{m}.$$ Then $B_{L,\omega}\rightarrow \beta\alpha$.
\end{lemma}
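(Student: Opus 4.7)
The plan is to use the spectral decomposition of $B/\mu$. Since $B$ is the incidence matrix of the irreducible (hence strongly connected) Markov chain $C^{max}$, the Perron--Frobenius theorem for irreducible non-negative matrices applies: $\mu$ is a simple eigenvalue of $B$, and all other eigenvalues have modulus $\leq \mu$. The eigenvalues of $B$ lying on the spectral circle $\{|z| = \mu\}$ are precisely the products of $\mu$ with the $p$-th roots of unity, where $p$ is the period of $C^{max}$; each such eigenvalue is simple (no Jordan blocks on the spectral circle). In terms of $B/\mu$, the unit-modulus eigenvalues $1 = \lambda_1, \lambda_2, \dots, \lambda_p$ are simple, and the remaining eigenvalues have modulus strictly less than $1$.

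First I would decompose
\[
\frac{B^m}{\mu^m} \;=\; \sum_{j=1}^{p} \lambda_j^{\,m}\, P_j \;+\; T^m,
\]
where $P_j$ is the rank-one spectral projection onto the $\lambda_j$-eigenspace and $T$ is the part of the Jordan decomposition corresponding to eigenvalues of modulus $< 1$. With the normalisation $\alpha\beta = 1$, the spectral projection onto the Perron eigenspace is exactly $P_1 = \beta\alpha$. Furthermore, $\|T^m\|_2 \leq C\rho^m$ for some $\rho < 1$ and constant $C > 0$.

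Substituting into the definition of $B_{L,\omega}$ and exchanging the two finite sums gives
\[
B_{L,\omega} \;=\; \sum_{j=1}^{p}\Bigl(\tfrac{1}{\omega(L)}\!\sum_{m=L-\omega}^{L}\!\lambda_j^{\,m}\Bigr)P_j \;+\; \tfrac{1}{\omega(L)}\!\sum_{m=L-\omega}^{L}\! T^m.
\]
The $j=1$ term contributes $(1 + o_L(1))\,\beta\alpha$. For each $j \geq 2$, the inner sum is a partial geometric series with common ratio $\lambda_j \neq 1$ on the unit circle, so its modulus is bounded above by $2/|\lambda_j - 1|$, and dividing by $\omega(L) \to \infty$ makes it $o_L(1)$. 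For the remainder term, $\|T^m\|_2 \leq C\rho^m$ implies that $\|\sum_{m=L-\omega}^{L} T^m\|_2 \leq C\rho^{L-\omega(L)}/(1-\rho)$, and since $\omega(L) \leq \log\log L$ grows much slower than $L$, this quantity decays exponentially in $L$ and is certainly $o_L(\omega(L))$. Combining these three estimates yields $B_{L,\omega} \to \beta\alpha$.

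The only conceptual obstacle is the potential periodicity of $C^{max}$: if $p = 1$ (the aperiodic case), the statement follows without averaging, directly from $B^m/\mu^m \to \beta\alpha$. When $p > 1$, the sequence $B^m/\mu^m$ itself does not converge, but oscillates through the $p$ spectral projections $P_j$; the role of averaging over a window of length $\omega(L) \to \infty$ is precisely to suppress these oscillations. Making the bound on $\omega(L)$ quantitative (and independent of the unknown period $p$) is the only point where some care is required, but since the $\lambda_j$ are finitely many fixed roots of unity for our fixed $G$, the constants $|\lambda_j - 1|^{-1}$ and $\rho$ depend only on $G$, and the conclusion follows.
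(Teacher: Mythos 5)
Your proof is correct, but it takes a genuinely different route from the paper. The paper never writes down the full spectral decomposition of $B/\mu$; instead it shows that the sequence $B_{L,\omega}$ is Cauchy (by estimating $\Vert B_{L+1,\omega}-B_{L,\omega}\Vert$ directly, using only that $\Vert B^m/\mu^m\Vert$ is bounded and that $\omega$ is slow-growing), so that $B_{L,\omega}\to B_\omega$ for some matrix $B_\omega$; then it shows $\frac{1}{\mu}BB_\omega=B_\omega=\frac{1}{\mu}B_\omega B$, and uses simplicity of the Perron root $\mu$ to identify $B_\omega$ as the rank-one projection $\beta\alpha$. That argument requires only the simplicity of $\mu$ from Perron--Frobenius and is agnostic about the rest of the peripheral spectrum. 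Your argument, by contrast, invokes the finer Perron--Frobenius structure — that the peripheral eigenvalues of an irreducible non-negative matrix are exactly the $p$-th roots of unity times $\mu$, each simple with no associated Jordan block — and then directly estimates the Cesàro average of each spectral piece. The payoff is transparency: one sees exactly what the averaging over a window of length $\omega(L)\to\infty$ is for (suppressing the geometric-sum oscillations coming from periodicity), and why averaging is unnecessary in the aperiodic case, which matches the paper's separate aperiodic lemma. One very small caveat in your write-up: if $T$ has non-trivial Jordan blocks, then $\Vert T^m\Vert_2=O(m^{k-1}\rho^m)$ rather than $O(\rho^m)$, but this is harmless since you can absorb the polynomial factor by replacing $\rho$ with any $\tilde\rho\in(\rho,1)$; it would be worth flagging. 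Both proofs are valid; yours trades a slightly heavier input from Perron--Frobenius for a more explicit identification of the limit, while the paper's trades explicitness for economy of hypotheses.
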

\begin{proof}
We calculate that:
\begin{align*}
   \left \vert\left\vert B_{L+1,\omega}-B_{L,\omega}\right\vert\right\vert
&\leq \dfrac{1}{\omega (L+1)\mu^{L+1}}\left\vert \left\vert B^{L+1}\right\vert\right\vert+\dfrac{1}{\omega (L)}\sum_{m=L-\omega (L)}^{L+1-\omega (L+1)}\dfrac{1}{\mu^{m}}\vert\vert B^{m}\vert\vert\\
&+\sum_{m=\max\left\{\substack{L+1-\omega (L+1)\\L-\omega (L)}\right\}}^{L}\left(\dfrac{1}{\omega(L)}-\dfrac{1}{\omega(L+1)}\right)\dfrac{1}{\mu^{m}}\vert\vert B^{m}\vert\vert\\
&\leq \dfrac{1}{\omega(L+1)}+\dfrac{\omega(L+1)+1-\omega(L)}{\omega (L)}+\sum\limits_{m=\min\left\{\substack{L+1-\omega (L+1)\\L-\omega (L)}\right\}}^{L}
\dfrac{\omega (L+1)-\omega (L)}{\omega(L)\omega(L+1)}\\
&\leq o_{L}(1)+ 2\dfrac{\omega(L+1)-\omega (L)}{\omega (L)}\\
&=o_{L}(1).
\end{align*}
Hence $B_{L,\omega}$ converges to a matrix $B_{\omega}$. Next,
\begin{align*}
   \left \vert\left\vert \dfrac{1}{\mu}BB_{L,\omega}-B_{L+1,\omega}\right\vert\right\vert
&\leq \sum_{m=L-\omega (L)}^{L+1-\omega (L+1)}\dfrac{1}{\omega(L)}+ \sum_{m=L+1-\omega (L+1)}^{L+1}\left(\dfrac{1}{\omega(L)}-\dfrac{1}{\omega(L+1)}\right)\dfrac{1}{\mu^{m}}\vert\vert B^{m}\vert\vert\\
&\leq o_{L}(1).
\end{align*}
Therefore $$\dfrac{1}{\mu}BB_{\omega}=B_{\omega}=\dfrac{1}{\mu}B_{\omega}B.$$

Since $B$ has a unique real eigenvalue $\mu$ of largest norm, by applying the above we see that $B_{\omega}$ has eigenvalues $1$ (of algebraic multiplicity $1$) and $0$. In particular, $B_{\omega}$ is a rank one projection, and so can be written $$B_{\omega}=\beta\alpha,$$
where $\alpha$ (respectively $\beta$) is the left (respectively right) $\mu$-eigenvector of $B_{\omega}$, and hence of $B$. 

\end{proof}
We now observe the following.
\begin{lemma}\label{lem: measure of sets between two paths}
Let $\gamma=(e_{i_{1}},\hdots ,e_{i_{m}}), \gamma'=(e_{j_{1}},\hdots ,e_{j_{n}})$ be paths in $C^{max}$. Then $$\sum \limits_{\substack{L-\omega (L)\leq s\leq L\\\sigma=(e_{k_{1}},\hdots e_{k_{s}})}}\nu (\mathcal{Z}[\gamma,\sigma,\gamma'],0)=(1+o_{L}(1))\omega (L) \alpha_{i_{1}}\beta_{i_{m}}\alpha_{j_{1}}\beta_{j_{n}}\slash \mu^{m+n-1}.$$
\end{lemma}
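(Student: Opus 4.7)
The strategy is to factor the sum on the left-hand side into a piece that depends only on $\gamma$ and $\gamma'$ (arising from the Parry formula) and a piece that counts the admissible bridges $\sigma$ of each length $s$, then recognise the latter as an $\omega(L)$-wide window average that is controlled by Lemma \ref{lem: convergence of approximation}.

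First I would observe that for any admissible bridge $\sigma$ of length $s$, the cylinder set $\mathcal{Z}[\gamma\sigma\gamma',0]$ fixes $m+s+n$ consecutive edges of a bi-infinite sequence in $\mathcal{Z}$, so the explicit Parry formula gives
$$\nu(\mathcal{Z}[\gamma\sigma\gamma',0])=\alpha_{i_{1}}\beta_{j_{n}}\slash\mu^{m+s+n-1},$$
a quantity that depends on $s$ but not on the particular choice of $\sigma$. Next I would count the admissible bridges of length $s$: a tuple $(e_{k_{1}},\hdots,e_{k_{s}})$ yields a valid concatenation with $\gamma$ and $\gamma'$ precisely when $B_{i_{m},k_{1}}B_{k_{1},k_{2}}\cdots B_{k_{s-1},k_{s}}B_{k_{s},j_{1}}=1$, and summing this product over $(k_{1},\hdots,k_{s})$ collapses to $(B^{s+1})_{i_{m},j_{1}}$.

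Substituting these two ingredients into the sum and pulling the $s$-independent factor $\alpha_{i_{1}}\beta_{j_{n}}$ outside (after the harmless change of variables $t=s+1$) reduces the left-hand side to a constant multiple of the window sum
$$\sum_{t=L-\omega(L)+1}^{L+1}\frac{(B^{t})_{i_{m},j_{1}}}{\mu^{t}}.$$
Now Lemma \ref{lem: convergence of approximation} asserts precisely that $\omega(L)^{-1}\sum_{t=L-\omega(L)}^{L}B^{t}\slash\mu^{t}\to \beta\alpha$, the rank-one projection. The window above differs from the one in Lemma \ref{lem: convergence of approximation} only by shifting each endpoint by a single term, and because $\|B^{t}\slash\mu^{t}\|$ is uniformly bounded (it converges to $\beta\alpha$) the two swapped boundary terms contribute $O(1)$, which becomes $o(\omega(L))$ after dividing by $\omega(L)\to\infty$. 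Hence the window sum equals $(1+o_{L}(1))\omega(L)(\beta\alpha)_{i_{m},j_{1}}=(1+o_{L}(1))\omega(L)\beta_{i_{m}}\alpha_{j_{1}}$, which combined with the prefactor produces the asserted asymptotic.

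I do not expect any serious obstacle beyond careful index bookkeeping: the one potentially delicate point is justifying the one-unit boundary shift needed to apply Lemma \ref{lem: convergence of approximation}, but this is immediate from the uniform boundedness of $B^{t}\slash\mu^{t}$ together with $\omega(L)\to\infty$. The rest is a direct multiplication of the Parry mass by the bridge count.
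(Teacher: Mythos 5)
Your approach is essentially identical to the paper's: both factor out the Parry mass $\alpha_{i_{1}}\beta_{j_{n}}/\mu^{m+s+n-1}$ from the cylinder formula, observe that the number of admissible bridges of length $s$ is an entry of a power of $B$, and then invoke Lemma~\ref{lem: convergence of approximation} to evaluate the resulting $\omega(L)$-window sum.

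One point in your justification is wrong, although it is benign: you assert that $\|B^{t}/\mu^{t}\|$ is uniformly bounded \emph{because} ``it converges to $\beta\alpha$.'' The sequence $B^{t}/\mu^{t}$ does \emph{not} converge in general --- if $C^{\max}$ is periodic with period $p>1$, it oscillates along residue classes of $t\bmod p$, and only the $\omega$-window average converges. Indeed this is precisely the reason Lemma~\ref{lem: convergence of approximation} and the $\omega$-averaging are introduced (see the contrast with Lemma~\ref{lem: convergence of approximation aperiodic}, which is stated only under aperiodicity). Fortunately, the claim you actually need --- uniform boundedness of $\|B^{t}/\mu^{t}\|$ --- is still true, since $\mu$ is the spectral radius of $B$ and the Perron eigenvalue is algebraically simple (Perron--Frobenius), so your boundary-term estimate and the rest of the proof go through once the stated reason is replaced by this. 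Finally, you wave at ``a constant multiple'' after the reindexing $t=s+1$ without tracking it; the substitution introduces an explicit factor of $\mu$ (from $\mu^{s}=\mu^{t-1}$), and one should verify that it is accounted for in the claimed constant $\alpha_{i_{1}}\beta_{i_{m}}\alpha_{j_{1}}\beta_{j_{n}}/\mu^{m+n-1}$. The paper's own display is itself somewhat loose about this (its penultimate line mixes indices $s$ and $t$, and writes $B^{t}_{i_{0},j_{n}}$), so when tidying up you should chase the exponent carefully; the downstream use of the lemma only depends on the asymptotic shape, so any slip here is in a harmless constant, but a complete write-up should resolve it.
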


\begin{proof}
We see that 
\begin{align*}
    \sum \limits_{\substack{L-\omega (L)\leq s\leq L\\\sigma=(e_{k_{1}},\hdots e_{k_{s}})\\\gamma\sigma\gamma'\mbox{ a path in  }C^{max}}}\nu (\mathcal{Z}[\gamma,\sigma,\gamma'],0)&=\sum \limits_{\substack{L-\omega (L)\leq s\leq L\\\sigma=(e_{k_{1}},\hdots e_{k_{s}})\\\gamma\sigma\gamma'\mbox{ a path in  }C^{max}}}\alpha_{i_{1}}\beta_{j_{n}}\slash\mu^{m+n+s-1}\\
 &=   \dfrac{\alpha_{i_{1}}\beta_{j_{n}}}{\mu^{m+n-1}}\sum \limits_{\substack{L-\omega (L)\leq s\leq L\\\sigma=(e_{k_{1}},\hdots e_{k_{s}})\\\gamma\sigma\gamma'\mbox{ a path in  }C^{max}}}1\slash\mu^{s}\\
  &=\dfrac{\alpha_{i_{1}}\beta_{j_{n}}}{\mu^{m+n-1}}\sum \limits_{L-\omega (L)\leq t\leq L}B^{t}_{i_{0},j_{n}}1\slash\mu^{s}\\
    &=(1+o_{L}(1))\omega (L) \alpha_{i_{1}}\beta_{i_{m}}\alpha_{j_{1}}\beta_{j_{n}}\slash \mu^{m+n-1}
\end{align*}
by Lemma \ref{lem: convergence of approximation}.
\end{proof}
We can also immediately deduce the following lemmas.
\begin{lemma}\label{lem: measure of starting path}
Let $\gamma=(e_{i_{1}},\hdots ,e_{i_{m}}),$ be a path in $C^{max}$. Then 
\begin{align*}
 \sum \limits_{\substack{L-\omega (L)\leq n\leq L\\\sigma=(e_{j_{1}},\hdots ,e_{j_{n}})}}&\sum \limits_{\substack{L-\omega (L)\leq s\leq L\\\sigma'=(e_{k_{1}},\hdots e_{k_{s}})}}\nu (\mathcal{Z}[\gamma,\sigma,\sigma'],0)=\omega (L)^{2} \nu (\mathcal{Z}[\gamma])\\
 &=\sum \limits_{\substack{L-\omega (L)\leq n\leq L\\\sigma=(e_{j_{1}},\hdots ,e_{j_{n}})}}\sum \limits_{\substack{L-\omega (L)\leq s\leq L\\\sigma'=(e_{k_{1}},\hdots e_{k_{s}})}}\nu (\mathcal{Z}[\sigma,\sigma'\gamma],0).   
\end{align*}
\end{lemma}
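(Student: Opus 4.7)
The plan is to show that, for each fixed length pair $(n,s)$ in the summation range, the inner double sum over valid $(\sigma,\sigma')$ collapses to exactly $\nu(\mathcal{Z}[\gamma])$, so that the full sum equals $(\omega(L)+1)^{2}\nu(\mathcal{Z}[\gamma]) = (1+o_{L}(1))\omega(L)^{2}\nu(\mathcal{Z}[\gamma])$. Both required identities are then formal consequences of the Perron--Frobenius eigenvector identities $B\beta=\mu\beta$ and $\alpha B=\mu\alpha$ encoded in the definition of the Parry measure, together with its shift-invariance. Because $C^{\max}$ is irreducible, every path $\gamma$ extends in every direction, so no valid cylinder is excluded from the sums.

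For the first equality, I would use the explicit formula $\nu(\mathcal{Z}[\gamma\sigma\sigma',0])=\alpha_{i_{1}}\beta_{k_{s}}/\mu^{m+n+s-1}$. With $\sigma=(e_{j_{1}},\ldots,e_{j_{n}})$ fixed, summing over admissible $\sigma'=(e_{k_{1}},\ldots,e_{k_{s}})$ reduces (after extracting the constant factor $\alpha_{i_{1}}/\mu^{m+n+s-1}$) to $\sum_{k_{1}\,:\,B_{j_{n},k_{1}}=1}(B^{s-1}\beta)_{k_{1}}$; iterating $B\beta=\mu\beta$ collapses this to $\mu^{s}\beta_{j_{n}}$, so the sum over $\sigma'$ equals $\nu(\mathcal{Z}[\gamma\sigma])$. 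Summing again over $\sigma$ by the same telescoping argument yields $\nu(\mathcal{Z}[\gamma])$, establishing the first equality after summing over $n$ and $s$.

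For the second equality, $\nu(\mathcal{Z}[\sigma\sigma'\gamma,0])=\alpha_{j_{1}}\beta_{i_{m}}/\mu^{n+s+m-1}$. A symmetric computation using the left-eigenvector identity $\alpha B=\mu\alpha$ gives the same reduction. Conceptually cleaner, however, is to invoke shift-invariance of $\nu$: we have $\nu(\mathcal{Z}[\sigma\sigma'\gamma,0])=\nu(\mathcal{Z}[\sigma\sigma'\gamma,-(n+s)])$, and as $(\sigma,\sigma')$ ranges over all valid extensions of $\gamma$ to the left these cylinders partition $\mathcal{Z}[\gamma,0]$, so their measures sum to $\nu(\mathcal{Z}[\gamma])$. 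Summing over the admissible $n,s$ then gives the result.

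There is no real obstacle here; the lemma is a bookkeeping consequence of the eigenvector identities and shift-invariance. The only care needed is in handling the off-by-one between $\omega(L)$ and $\omega(L)+1$ values in each summation range, which is absorbed into the asymptotic equality $(1+o_{L}(1))\omega(L)^{2}$, consistent with the treatment in Lemma~\ref{lem: measure of sets between two paths}.
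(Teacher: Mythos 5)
Your proof is correct, and it matches the approach the paper implicitly intends (the paper offers no written proof, declaring the lemma an immediate deduction after Lemma~\ref{lem: measure of sets between two paths}). The key observation you make — that unlike Lemma~\ref{lem: measure of sets between two paths}, where both endpoints are pinned and one must invoke the convergence $B_{L,\omega}\to\beta\alpha$, here the free ends of $\sigma\sigma'$ let the sums telescope \emph{exactly} via $B\beta=\mu\beta$ (or $\alpha B=\mu\alpha$, or shift-invariance), giving $\nu(\mathcal{Z}[\gamma])$ for each fixed $(n,s)$ — is exactly right, and the irreducibility of $C^{\max}$ guarantees no sum is vacuously empty. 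You are also correct that a literal count of the summation range gives $(\omega(L)+1)^{2}$ rather than $\omega(L)^{2}$; the lemma statement silently suppresses this $(1+o_{L}(1))$ factor, consistent with how the paper treats such off-by-ones elsewhere.
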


\begin{lemma}\label{lem: measure of sets middle path}
Let $\gamma=(e_{i_{1}},\hdots ,e_{i_{m}})$ be a path in $C^{max}$. Then $$\sum \limits_{\substack{L-\omega (L)\leq n\leq L\\\sigma=(e_{j_{1}},\hdots ,e_{j_{n}})}}\sum \limits_{\substack{L-\omega (L)\leq l\leq s\\\sigma'=(e_{k_{1}},\hdots e_{k_{s}})}}\nu (\mathcal{Z}[\sigma,\gamma,\sigma'],0)= \omega(L)^{2}\nu(\mathcal{Z}[\gamma,0]).$$
\end{lemma}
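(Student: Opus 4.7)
The plan is to exploit the shift-invariance of the Parry measure together with countable additivity. Because the path $\gamma$ is flanked on both sides by sums over \emph{all} valid concatenating paths of prescribed length ranges, everything should collapse exactly, without any error terms and without needing the asymptotic input from Lemma \ref{lem: convergence of approximation}.

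Concretely, I would first note that, as is visible from the defining formula $\nu(\mathcal{Z}[(e_{i_{0}},\ldots,e_{i_{n}}), m]) = \alpha_{i_{0}}\beta_{i_{n}}/\mu^{n}$, the Parry measure is shift-invariant, so $\nu(\mathcal{Z}[\gamma, n]) = \nu(\mathcal{Z}[\gamma, 0])$ for every $n \geq 0$. Then for fixed $n$ and $s$, as $\sigma$ ranges over all valid paths of length $n$ ending at the initial vertex of $\gamma$ and $\sigma'$ over all valid paths of length $s$ starting at the terminal vertex of $\gamma$, the cylinders $\mathcal{Z}[\sigma,\gamma,\sigma',0]$ are pairwise disjoint, and their union is exactly the set of bi-infinite paths $z\in\mathcal{Z}$ with $z_{n}\cdots z_{n+m-1}=\gamma$ — that is, $\mathcal{Z}[\gamma, n]$. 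Countable additivity therefore gives
$$\sum_{\sigma \text{ of length } n}\sum_{\sigma' \text{ of length } s}\nu(\mathcal{Z}[\sigma,\gamma,\sigma',0]) \;=\; \nu(\mathcal{Z}[\gamma, n]) \;=\; \nu(\mathcal{Z}[\gamma, 0]),$$
and summing this identity over the ranges of $n$ and $s$ in $[L-\omega(L), L]$ (each contributing $\omega(L)$ values in the paper's convention) produces $\omega(L)^{2}\nu(\mathcal{Z}[\gamma, 0])$, as claimed.

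The only point requiring genuine verification is the disjoint-union assertion: if $z\in\mathcal{Z}[\gamma,n]$, then $z$ decomposes uniquely into initial, middle, and tail blocks of lengths $n$, $m$, $s$ with middle block equal to $\gamma$, and the flanking blocks are automatically valid paths in $C^{max}$ that concatenate compatibly with $\gamma$, because $z$ itself is such a bi-infinite path; this yields both pairwise disjointness and covering. In this sense there is no real obstacle — unlike in Lemma \ref{lem: measure of sets between two paths}, where the middle segment varied between two fixed endpoint paths and the asymptotic $\mu^{-m}B^{m}\to\beta\alpha$ from Lemma \ref{lem: convergence of approximation} had to be invoked, summing freely on both sides of $\gamma$ here makes the identity exact by stationarity alone.
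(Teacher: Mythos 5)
Your argument is correct, and since the paper gives no proof of this lemma (it is stated as something one can ``immediately deduce''), your approach is almost certainly the intended one: for fixed lengths $n,s$, the cylinders $\mathcal{Z}[\sigma,\gamma,\sigma',0]$ over all compatible flanking paths $\sigma,\sigma'$ partition $\mathcal{Z}[\gamma,n]$, and shift-invariance of the Parry measure (visible directly from the cylinder formula, which has no dependence on the starting index $m$) gives $\nu(\mathcal{Z}[\gamma,n])=\nu(\mathcal{Z}[\gamma,0])$. Summing over the $n$ and $s$ ranges then yields the claim. Your observation that this is exact — in contrast to Lemma \ref{lem: measure of sets between two paths}, where the varying middle block between two fixed endpoint paths forces one to invoke Lemma \ref{lem: convergence of approximation} and incurs a $(1+o_L(1))$ factor — is precisely the right distinction. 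Two trivial housekeeping points: the second index bound in the lemma's display is a typo (it should read $L-\omega(L)\leq s\leq L$, as you correctly interpret), and strictly speaking $[L-\omega(L),L]$ contributes $\omega(L)+1$ integers, so the exact constant is $(\omega(L)+1)^2$; the paper writes $\omega(L)^2$ as a harmless convention since only the asymptotics matter downstream, and you have already flagged this.
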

%----------------------------------------------------------------------------------------
%   First eigenvalue of some random graphs
%----------------------------------------------------------------------------------------
%\input{Sections/First_eigenvalue_of_random_graphs_including_2_torsion}
\section{The first eigenvalue of some random graphs}\label{appendix: Spectral theory of restricted graphs}
To prove Theorem \ref{mainthm: property t in random quotients of hyperbolic groups}, we are required to analyse the eigenvalues of a particular model of random graphs. We first introduce the following well known model.
\begin{definition}
Let $m\geq 1$ and let $\underline{w}=(w_{1},\hdots, w_{m})\in\mathbb{Z}_{+}^{m}.$
The random graph $\Gw$ is the graph with vertex set $V=\{u_{1},\hdots,u_{m}\}$, and each edge between $u_{i}$ and $u_{j}$ is added with probability $w_{i}w_{j}\rho$, where $\rho=1\slash\sum w_{i}.$ We define $w_{min}=\min\{w_{i}\}$ and $\overline{w}=\sum w_{i}\slash m.$

\end{definition}
The eigenvalues of this graph were analysed in \cite{chungrandomgraph}.
\begin{theorem*}\cite[Theorem 5]{chungrandomgraph}
Suppose that $m$ and $\underline{w}$ are defined such that \\$w_{min}=\Omega_{m} (\sqrt{\;\overline{w}}\log^{3}m)$. Let $\Sigma\sim \Gw$. Then a.a.s.$(m)$, $$\max_{i\neq 0}\vert 1-\lambda_{i}(\Sigma) \vert= 2[1+o_{m}(1)]\slash \sqrt{\;\overline{w}}.$$
\end{theorem*}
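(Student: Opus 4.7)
\medskip

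\noindent\textbf{Proof proposal.} The plan is to compare the random normalised adjacency operator $M=D^{-1/2}AD^{-1/2}$ to its `expected' counterpart, which is (up to a vanishing correction on the diagonal) the rank-one matrix $\overline{M}$ with entries $\sqrt{w_i w_j}/\sum_k w_k$. The matrix $\overline{M}$ has leading eigenvalue $1$ with Perron eigenvector proportional to $(\sqrt{w_1},\dots,\sqrt{w_m})$ and all other eigenvalues equal to $0$. So, by Weyl's inequality, controlling $\mu_i(\Sigma)$ for $i\neq 1$ reduces to controlling $\Vert M-\overline{M}\Vert_{2}$, and via Remark~\ref{rmk: switching between eigenvalues of graphs} this in turn controls $\max_{i\neq 0}|1-\lambda_i(\Sigma)|$.

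The argument splits into three steps. First I would use a standard Chernoff/Bernstein estimate on the independent Bernoulli variables $\{A_{ij}\}_{i<j}$ to show that every degree concentrates: under the hypothesis $w_{min}=\Omega_m(\sqrt{\,\overline{w}\,}\log^{3}m)$, the degree $d_i$ satisfies $d_i=w_i(1+o_m(1))$ simultaneously for all $i$ with probability $1-o_m(1)$. This lets me replace $D$ by $\mathrm{diag}(w_1,\dots,w_m)$ throughout with a multiplicative error of $1+o_m(1)$. Second, I would estimate the spectral norm of the centred adjacency matrix $E:=A-\mathbb{E}[A]$. The entries of $E$ are independent, mean-zero, bounded by $1$, with variances $\sigma_{ij}^{2}=w_iw_j\rho(1-w_iw_j\rho)\leq w_iw_j\rho$. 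A trace/moment argument of Wigner type (or equivalently a careful matrix Bernstein bound tightened using the structure of the second moment matrix) yields $\Vert E\Vert_{2}\leq 2(1+o_m(1))\sqrt{w_{\max}}$, with the sharp constant $2$ coming from the semicircle edge and the polylog gap $\log^{3}m$ between $w_{min}$ and $\sqrt{\overline{w}}$ guaranteeing that the truncation and high-moment error terms are negligible. Third, combining the first two steps,
\[
\Vert M-\overline{M}\Vert_{2} \;\leq\; \frac{1+o_m(1)}{w_{min}}\,\Vert E\Vert_{2} \;\leq\; \frac{2(1+o_m(1))}{\sqrt{\,\overline{w}\,}},
\]
where the last inequality uses that the hypothesis forces $w_{\max}/w_{min}^{2}\leq 1/\overline{w}\cdot (1+o_m(1))$ in the regimes where it is tight.

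For the matching lower bound, which prevents $\max_{i\neq 0}|1-\lambda_i(\Sigma)|$ from being strictly smaller than $2(1-o_m(1))/\sqrt{\overline{w}}$, I would invoke the semicircle law for the rescaled matrix $\sqrt{\overline{w}}\,(M-\overline{M})$. The same moment calculation that upper-bounds $\Vert E\Vert_{2}$ identifies the limiting spectral distribution as (a deformation of) the Wigner semicircle supported on $[-2,2]$, so with probability $1-o_m(1)$ there exists an eigenvalue of $M-\overline{M}$ within $o_m(1/\sqrt{\overline{w}})$ of $\pm 2/\sqrt{\overline{w}}$; projecting onto the orthogonal complement of the Perron direction (which changes no non-trivial eigenvalue by more than a rank-one perturbation) gives the claim.

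The main obstacle is the sharp constant $2$ in the upper bound: a crude matrix Bernstein bound yields an extra logarithmic factor, and naive Wigner-type arguments require identical variances. The remedy is a combinatorial moment expansion in which one tracks paths in the weighted multigraph and uses the assumption $w_{min}=\Omega_m(\sqrt{\overline{w}}\log^{3}m)$ to guarantee that `irregular' closed walks (those revisiting a vertex more than twice, or entering a cycle of length $>2$) contribute a negligible fraction of the total, so that the dominant contribution reproduces the Catalan-number count responsible for the semicircle edge. All other steps (concentration of degrees, reduction from $A$ to $M$, transfer to the Laplacian via Remark~\ref{rmk: switching between eigenvalues of graphs}) are standard.
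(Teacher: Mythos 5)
The paper does not prove this statement; it is imported verbatim as \cite[Theorem 5]{chungrandomgraph} (Chung--Lu--Vu, \emph{Spectra of random graphs with given expected degrees}) and used as a black box. So there is no ``paper's own proof'' to compare against. Your sketch does reproduce the overall skeleton of Chung--Lu--Vu's argument (compare $D^{-1/2}AD^{-1/2}$ to the rank-one expected matrix, establish degree concentration, control the centred part by a Wigner-type moment argument), but there is a genuine gap in the third step.

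The inequality
\[
\Vert M-\overline{M}\Vert_{2}\ \leq\ \frac{1+o_m(1)}{w_{\min}}\,\Vert E\Vert_{2}\ \leq\ \frac{2(1+o_m(1))}{\sqrt{\overline{w}}}
\]
is justified by the claim that the hypothesis forces $w_{\max}/w_{\min}^{2}\leq(1+o_m(1))/\overline{w}$. That claim is false and is not implied by $w_{\min}=\Omega_m(\sqrt{\overline{w}}\log^{3}m)$. For example, take $w_1=m^{0.9}$ and $w_2=\cdots=w_m=m^{0.1}$: then $\overline{w}=\Theta(m^{0.1})$, $w_{\min}=m^{0.1}=\Omega(\sqrt{\overline{w}}\log^{3}m)$, yet $w_{\min}^{2}/\overline{w}=\Theta(m^{0.1})$ while $w_{\max}=m^{0.9}$. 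So the route ``bound $\Vert E\Vert_2$ by $2\sqrt{w_{\max}}$ in the unweighted adjacency scale, then divide by $w_{\min}$'' is irrecoverably lossy: it cannot give the sharp constant $2/\sqrt{\overline{w}}$ uniformly over the class of degree sequences allowed by the hypothesis.

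The fix is to normalise \emph{before} applying the moment method: work directly with $W^{-1/2}EW^{-1/2}$ (where $W=\operatorname{diag}(w_i)$). Its $(i,j)$ entry has variance $w_iw_j\rho(1-w_iw_j\rho)/(w_iw_j)=\rho(1+o(1))$ with $\rho=1/\sum_k w_k$ --- that is, the normalised centred matrix has (essentially) \emph{constant} entry variance $\rho$, independently of how skewed $\underline{w}$ is. The Füredi--Komlós/trace argument then gives $\Vert W^{-1/2}EW^{-1/2}\Vert_2\leq (2+o(1))\sqrt{m\rho}=(2+o(1))/\sqrt{\overline{w}}$ directly, with the polylog condition $w_{\min}\gg\sqrt{\overline{w}}\log^3 m$ used to control the entry bound $1/\sqrt{w_iw_j}\leq 1/w_{\min}$ in the high-moment truncation, not to compare $w_{\max}$ with $w_{\min}^2/\overline{w}$. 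Your remaining steps (degree concentration, Weyl's inequality, reduction via Remark~\ref{rmk: switching between eigenvalues of graphs}, and the matching lower bound from the semicircle edge) are fine in spirit and mirror the cited proof, but the middle step must be rerouted as above.
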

The above inequality can be rewritten as $$\max_{i\neq 1}\vert \mu_{i}(D(\Sigma)^{-1}A(\Sigma)) \vert= 2[1+o_{m}(1)]\slash \sqrt{\;\overline{w}}.$$

We now introduce several models of random graphs specific to our needs. Ultimately, we wish to model our graph $\Upsilon$ by a suitable random graph model $\Dpp.$ However, the random graph $\Dpp$ is somewhat complicated, and so, we build up to understanding the eigenvalues of this graph by first passing through some simpler models of random graphs. In particular, $\Dpp$ is formed from the union of $\Gpp$, $\Bpp$ and $\Lpp$; we study each of these graphs in turn.

In what follows, $\pi$ will be a vector in $(0,\infty)^{\vert V\vert}$, where $V$ will be the vertex set of the graph under consideration. Informally, $\pi_{i}$ corresponds to the `weight' of the $ith$ vertex of the graph. For notational ease we will write $\pi(v)$ to be the entry of $\pi$ corresponding to the vertex $v$. Furthermore, to be technically accurate, the below constructions are actually constructions of sequences of graphs. We therefore require a choice of $\pi_{m}\in\mathbb{Z}^{m}$ for each $m$: we will ignore the subscripts $m$ for notational ease. For the remainder of this text, we \emph{always} assume that $p<1\slash 2$.
\begin{definition}[{\bf The graph }$\boldsymbol{\Gpp}$]
Let $m\geq 1$, $\pi_{min}$ be a constant independent of $m$, and let $\pi\in[\pi_{min},\infty)^{m}$. The random graph $\Gpp$ has vertex set $V=\{v_{1},\hdots,v_{m}\}$. The map $*:V\rightarrow V$ a fixed-point free involution specified in advance. Let $v_{i}^{-1}:=*(v_{i})$. Each edge $(v_{i},v_{j})$ is added with probability $(\pi (v_{i})\pi(v_{j}^{-1})+\pi(v_{i}^{-1})\pi(v_{j}))p$.
\end{definition}

\begin{definition}[{\bf The graph} $\boldsymbol{\Bpp}$]
Let $m_{1},m_{2}\geq 1$, $\pi_{min}>0$ be a constant independent of $m_{1}$ and $m_{2}$, and $\pi\in[\pi_{min},\infty)^{m_{1}+m_{2}}$. The random graph $\Bpp$ is the bipartite graph with vertex partition $V_{1}\sqcup V_{2},$ where $V_{1}=\{u_{1},\hdots,u_{m_{1}}\}$ and $ V_{2}=\{v_{1},\hdots ,v_{m_{2}}\}$. Each edge $(u_{i},v_{j})$ is added with probability $\pi (u_{i})\pi(v_{j})p$.
\end{definition}
\begin{definition}[{\bf The graph} $\boldsymbol{\Lpp}$]
Let $m_{1},m_{2}\geq 1$, $\pi_{min}>0$ be a constant independent of $m_{1}$ and $m_{2}$, and let $\pi\in[\pi_{min},\infty)^{m_{1}+m_{2}}$. The random graph $\Lpp$ is defined as follows. It has vertex set $V=V_{1}\sqcup V_{2}\sqcup V_{3}$, where  $V_{1}=\{u_{1},\hdots,u_{m_{1}}\}$, $V_{2}=\{v_{1},\hdots,v_{m_{2}}\},$ and $V_{3}=\{v_{1}^{-1},\hdots,v_{m_{2}}^{-1}\}.$ The map $*:V_{1}\rightarrow V_{1}$ is a fixed-point free involution specified in advance. Let $u_{i}^{-1}:=*(u_{i})$. Each edge $(u_{i},v_{j})$ is added with probability $\pi(u_{i}^{-1})\pi(v_{j})p$. Each edge $(u_{i},v_{j}^{-1})$ is added with probability $\pi(u_{i})\pi(v_{j})p$.
\end{definition}
\begin{definition}[{\bf The graph} $\boldsymbol{\Dpp}$]
Let $m_{1},m_{2}\geq 1$, $\pi_{min}>0$ be a constant independent of $m_{1}$ and $m_{2}$, and let $\pi\in[\pi_{min},\infty)^{m_{1}+m_{2}}$. The random graph $\Dpp$ is the random graph defined as follows. It has vertex set $V=V_{1}\sqcup V_{2}\sqcup V_{3}$, where  $V_{1}=\{u_{1},\hdots,u_{m_{1}}\}$, $V_{2}=\{v_{1},\hdots,v_{m_{2}}\},$ and $V_{3}=\{v_{1}^{-1},\hdots,v_{m_{2}}^{-1}\}.$ The map $*:V_{1}\rightarrow V_{1}$ is a fixed-point free involution specified in advance. Let $u_{i}^{-1}:=*(u_{i})$.
\begin{enumerate}[label={$\arabic* )$}]
\item Each edge $(u_{i},u_{j})$ is added with probability $$(\pi (u_{i})\pi(u_{j}^{-1})+\pi(u_{i}^{-1})\pi(u_{j}))p.$$
\item Each edge $(u_{i},v_{j})$ is added with probability $\pi(u_{i}^{-1})\pi(v_{j})p$.
\item Each edge $(u_{i},v_{j}^{-1})$ is added with probability $\pi(u_{i})\pi(v_{j})p$.
\item Finally, each edge $(v_{i},v_{j}^{-1})$ is added with probability $\pi(v_{i})\pi(v_{j})p$.
\end{enumerate}
\end{definition}

\begin{definition}
For all of the above graphs, we define $$\pi_{max}=\max\{\pi(v)\;:\;v\in V\}.$$
\end{definition}

\subsection{Switching between the adjacency matrix and expected adjacency matrix}
Define the matrix $$\MGpp = \mathbb{E}(D(\Sigma))^{-1}\mathbb{E}(A(\Sigma))$$ for $\Sigma\sim\Gpp.$ We define similarly the matrices $M(\Gamma(m,\underline{w}))$,\\ $\MBpp,\;\MLpp,$ and $\MDpp.$

It will be important for us to switch between the random matrices $D^{-1}A$ associated to a random graph and the fixed matrix $\mathbb{E}(D^{-1})\mathbb{E}(A).$  In order to do this, we will use the matrix Bernstein inequality. 
\begin{theorem}\label{thm: Matrix bernstein}(\cite[Theorem 1.4]{tropp2012user}, c.f.  \cite{oliveira2009concentration})
Let $X_{1},\hdots ,X_{M}$ be a set of independent random symmetric $m\times m$ matrices. Suppose for $i=1,\hdots M$: $ \vert\vert X_{i}-\mathbb{E}(X_{i})\vert\vert_{2}\leq L$.
Let $X=\sum_{i}X_{i}$, and $$\sigma(X)=\left\vert \left\vert\sum_{i}\mathbb{E}\left[(X_{i}-\mathbb{E}(X_{i}))(X_{i}-\mathbb{E}(X_{i}))^{T}\right]\right\vert\right\vert.$$
Then for all $t>0$:
$$\mathbb{P}(\vert\vert X-\mathbb{E}(X)\vert\vert_{2}\geq t)\leq 2m\exp\left\{\dfrac{-t^{2}\slash 2}{\sigma(X)+Lt\slash 3}\right\}.$$
\end{theorem}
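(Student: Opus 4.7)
The plan is to prove the matrix Bernstein inequality by a non-commutative extension of the scalar Chernoff method. Let $Y_{i} := X_{i} - \mathbb{E}(X_{i})$ and $Y := \sum_{i} Y_{i}$, so each $Y_{i}$ is symmetric, mean-zero, and satisfies $\vert\vert Y_{i}\vert\vert_{2} \leq L$. Since $\vert\vert Y\vert\vert_{2} = \max\{\mu_{1}(Y),\, \mu_{1}(-Y)\}$, it suffices to obtain a one-sided tail bound on $\mu_{1}(Y)$ and apply it symmetrically. Markov's inequality applied to the trace of the matrix exponential gives, for any $\theta > 0$,
$$\mathbb{P}(\mu_{1}(Y) \geq t) \leq e^{-\theta t}\, \mathbb{E}\left[\mathrm{tr}\, \exp(\theta Y)\right].$$

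The first key step is to control the expected trace exponential of the sum. Because the $Y_{i}$ do not commute, the scalar identity $\mathbb{E}[e^{\theta \sum_{i} Y_{i}}] = \prod_{i} \mathbb{E}[e^{\theta Y_{i}}]$ fails outright. The substitute is Lieb's concavity theorem: for any fixed symmetric $H$, the map $A \mapsto \mathrm{tr}\, \exp(H + \log A)$ is concave on positive-definite $A$. Applying this iteratively together with the tower property and Jensen's inequality lets us peel summands off one at a time, yielding the master inequality
$$\mathbb{E}[\mathrm{tr}\, e^{\theta Y}] \leq \mathrm{tr}\, \exp\left(\sum_{i} \log \mathbb{E}[e^{\theta Y_{i}}]\right).$$

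The second step is a Bernstein-type moment generating function bound for each summand. Using $\vert\vert Y_{i}\vert\vert_{2} \leq L$ to estimate $Y_{i}^{k} \preceq L^{k-2} Y_{i}^{2}$ in the Loewner order for $k \geq 2$ and summing the resulting power series, one obtains
$$\mathbb{E}[e^{\theta Y_{i}}] \preceq \exp\left(\frac{\theta^{2}/2}{1 - \theta L/3}\, \mathbb{E}[Y_{i}^{2}]\right)$$
for $0 < \theta < 3/L$. Taking logarithms, summing over $i$, and using operator monotonicity of $\mathrm{tr}\,\exp$ on the Loewner cone together with the hypothesis $\vert\vert \sum_{i} \mathbb{E}[Y_{i}Y_{i}^{T}] \vert\vert_{2} = \sigma(X)$ (and $Y_{i}^{T} = Y_{i}$) yields
$$\mathbb{P}(\mu_{1}(Y) \geq t) \leq m\, \exp\left(-\theta t + \frac{\theta^{2} \sigma(X)/2}{1 - \theta L/3}\right).$$
Optimizing over the admissible range of $\theta$ produces the one-sided bound $m\, \exp\{-t^{2}/(2\sigma(X) + 2Lt/3)\}$; repeating the argument for $-Y$ and union-bounding gives the stated factor of $2m$.

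The principal obstacle is establishing the master bound. Unlike in the scalar case, independence of the $Y_{i}$ does not let the matrix MGF factor through the sum, and the weaker Golden--Thompson inequality $\mathrm{tr}\, e^{A+B} \leq \mathrm{tr}(e^{A} e^{B})$ only handles two summands at a time, which is insufficient here. Lieb's concavity theorem is the essential non-commutative input that converts a sum inside the exponential into a product-in-log form amenable to per-summand control; every standard proof of matrix Bernstein hinges on Lieb or a closely related operator-convexity result. All other steps, in particular the MGF bound and the final Chernoff optimization, are direct matrix analogues of the classical scalar argument.
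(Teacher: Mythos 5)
The paper does not prove this statement; it cites it directly from Tropp (\cite[Theorem 1.4]{tropp2012user}). Your sketch is a correct reconstruction of Tropp's standard argument---Chernoff via the trace exponential, Lieb's concavity theorem to obtain the subadditivity of matrix cumulant generating functions, a Bernstein-type moment generating function bound for bounded summands, and optimization of $\theta$---so it matches the proof the paper implicitly relies on.
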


 We will also need to use the Chernoff bounds: for $X\sim Bin(m,p)$ and $\sigma\in [0,1],$ $$\mathbb{P}(\vert X - mp\vert \geq \delta mp)\leq 2\exp(-mp \delta^{2}\slash 3).$$ 
This allows us to prove the following.
\begin{lemma}\phantom{x}
\begin{enumerate}[label=$\roman*)$]
    \item Let $\Sigma\sim\Gpp$ with $mp=\Omega_{m}(\pi_{max}\log m)$ or $\Sigma\sim\Gw$ with $w_{min}p=\Omega_{m} (w_{max}\log m)$. Then
$$\vert\vert D(\Sigma)^{-1}A(\Sigma)-\mathbb{E}(D(\Sigma)^{-1})\mathbb{E}(A(\Sigma))\vert \vert_{2}=o_{m}(1)$$
with probability tending to $1$ as $m$ tends to infinity.

\item Let $\Sigma\sim \Bpp$ with $\min\{m_{1},m_{2}\}p=\Omega_{m_{1}}(\pi_{max}\log (m_{1}+m_{2}))$. Then
$$\vert\vert D(\Sigma)^{-1}A(\Sigma)-\mathbb{E}(D(\Sigma)^{-1})\mathbb{E}(A(\Sigma))\vert \vert_{2}=o_{m_{1}}(1)$$
with probability tending to $1$ as $m_{1}$ tends to infinity.
\end{enumerate}
\end{lemma}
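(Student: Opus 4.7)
The plan is to control the operator norm by the triangle-inequality decomposition
$$D(\Sigma)^{-1}A(\Sigma) - \mathbb{E}(D(\Sigma)^{-1})\mathbb{E}(A(\Sigma)) = D(\Sigma)^{-1}\bigl(A(\Sigma) - \mathbb{E}(A(\Sigma))\bigr) + \bigl(D(\Sigma)^{-1} - \mathbb{E}(D(\Sigma)^{-1})\bigr)\mathbb{E}(A(\Sigma)),$$
and then to bound the four resulting factors using the Chernoff bound for the diagonal matrices and Theorem \ref{thm: Matrix bernstein} for the adjacency matrices. The four factors to control are $\|D(\Sigma)^{-1}\|_{2}$, $\|A(\Sigma) - \mathbb{E}(A(\Sigma))\|_{2}$, $\|D(\Sigma)^{-1} - \mathbb{E}(D(\Sigma)^{-1})\|_{2}$, and $\|\mathbb{E}(A(\Sigma))\|_{2}$.

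First I would dispense with the diagonal matrices. In $\Gpp$ the expected degree of $v_{i}$ is $p(\pi(v_{i}) + \pi(v_{i}^{-1}))\sum_{j}\pi(v_{j}) = \Theta(mp\,\overline{\pi})$, and similarly each vertex in $\Bpp$ has expected degree $\Theta(\min\{m_{1},m_{2}\}p\,\overline{\pi})$. The hypotheses on $mp$ (resp.\ $\min\{m_{1},m_{2}\}p$) force this expected degree to exceed $\log m$ (resp.\ $\log(m_{1}+m_{2})$) by a factor that tends to infinity. Applying the Chernoff bound to each vertex degree with a deviation $\delta = \delta(m)$ that tends to $0$ slowly enough that $\delta^{2}\mathbb{E}\deg(v) \gg \log m$, and union bounding over the $m$ (resp.\ $m_{1}+m_{2}$) vertices, gives a.a.s.\ that every degree satisfies $\deg(v) = (1+o(1))\mathbb{E}\deg(v)$. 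Since $D(\Sigma)^{-1}$ and $\mathbb{E}(D(\Sigma)^{-1})$ are diagonal with entries $1/\deg(v)$ and $\mathbb{E}[1/\deg(v)]$, both asymptotic to $1/\mathbb{E}\deg(v)$, this yields
$$\|D(\Sigma)^{-1}\|_{2} = O\bigl(1/\mathbb{E}\deg_{\min}\bigr), \qquad \|D(\Sigma)^{-1} - \mathbb{E}(D(\Sigma)^{-1})\|_{2} = o\bigl(1/\mathbb{E}\deg_{\min}\bigr).$$

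Next I would apply Theorem \ref{thm: Matrix bernstein} to $A - \mathbb{E}A = \sum_{\{i,j\}} X_{ij}$, where $X_{ij}$ is the centered edge indicator supported on the two symmetric positions $(i,j)$ and $(j,i)$. Each $X_{ij}$ is symmetric with $\|X_{ij}\|_{2} \leq 1$, and $\sum \mathbb{E}(X_{ij}X_{ij}^{T})$ is diagonal with $(i,i)$ entry bounded by $\sum_{j}p_{ij}(1-p_{ij}) \leq \mathbb{E}\deg(v_{i}) \leq \mathbb{E}\deg_{\max}$, so $\sigma = O(\mathbb{E}\deg_{\max})$ and $L = 1$. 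Choosing $t$ of order $\sqrt{\mathbb{E}\deg_{\max}\log m}$ makes the Bernstein tail bound $o(1)$, yielding $\|A - \mathbb{E}A\|_{2} = O(\sqrt{\mathbb{E}\deg_{\max}\log m})$ a.a.s. Finally, $\|\mathbb{E}A\|_{2} \leq \max\{\|\mathbb{E}A\|_{1},\|\mathbb{E}A\|_{\infty}\} = O(\mathbb{E}\deg_{\max})$ by row-sum bounds.

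Combining, the first term contributes $O(\sqrt{\mathbb{E}\deg_{\max}\log m}/\mathbb{E}\deg_{\min}) = O(\sqrt{\log m/\mathbb{E}\deg_{\min}})$ since $\pi_{\max}/\pi_{\min}$ is a bounded constant and so the ratio $\mathbb{E}\deg_{\max}/\mathbb{E}\deg_{\min}$ is likewise bounded, while the second term contributes $o(1)$ by the diagonal estimates above. The hypothesis $mp = \Omega(\pi_{\max}\log m)$ (resp.\ on $\min\{m_{1},m_{2}\}p$ in the bipartite case) ensures that $\mathbb{E}\deg_{\min}/\log m \to \infty$, giving $o(1)$ bounds on both. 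The case $\Sigma \sim \Gw$ proceeds identically, with the role of $\pi_{\max}$ played by $w_{\max}$. The bipartite case $\Bpp$ is also handled identically, save that the Chernoff concentration is performed on each side of the bipartition separately. The main technical obstacle I anticipate is choosing the deviation parameter $\delta$ in the Chernoff step carefully so that $\delta \to 0$ but $\delta^{2}\mathbb{E}\deg_{\min}/\log m \to \infty$, ensuring the union bound over all vertices still gives an a.a.s.\ conclusion; this is standard but must be tracked throughout the combination step.
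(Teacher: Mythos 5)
Your overall strategy — split via a triangle inequality into an adjacency-error piece and a degree-error piece, controlling the former by matrix Bernstein and the latter by Chernoff — is essentially the paper's. But the way you bound the degree-error piece has a genuine gap, rooted in the claim that $\pi_{\max}/\pi_{\min}$ is ``a bounded constant.'' The definition of $\Gpp$ only fixes $\pi_{\min}$ to be a constant; $\pi_{\max}$ is allowed to grow with $m$, and indeed in the application in Section~6 one has $\pi_{\max}\le\log\log l\to\infty$. Once $\pi_{\max}\to\infty$, the naive product bound
$$\bigl\|\bigl(D^{-1}-\mathbb{E}(D)^{-1}\bigr)\mathbb{E}(A)\bigr\|_{2}\le\|D^{-1}-\mathbb{E}(D)^{-1}\|_{2}\,\|\mathbb{E}(A)\|_{2}$$
contributes $O(\delta\,\overline{d}_{\max}/\overline{d}_{\min})=O(\delta\,\pi_{\max}/\pi_{\min})$, where $\delta$ is the Chernoff deviation parameter and $\overline{d}$ denotes expected degree. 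The union bound forces $\delta^{2}\gtrsim\log m/\overline{d}_{\min}$, and combining these two constraints requires $mp=\Omega(\pi_{\max}^{2}\log m)$, which is strictly stronger than the lemma's hypothesis $mp=\Omega_{m}(\pi_{\max}\log m)$. So your second term is not $o_{m}(1)$ under the stated hypotheses.

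The paper sidesteps this by writing the degree-error piece as a product with a (row-)stochastic factor rather than with $\mathbb{E}(A)$: it uses
$$D^{-1}A-\mathbb{E}(D)^{-1}A=\bigl(I-\mathbb{E}(D)^{-1}D\bigr)D^{-1}A,$$
where $\|D^{-1}A\|_{2}=O\bigl(\sqrt{\overline{d}_{\max}/\overline{d}_{\min}}\bigr)=O(\sqrt{\pi_{\max}})$ (via similarity to the symmetric normalised adjacency operator), and Chernoff gives $\|I-\mathbb{E}(D)^{-1}D\|_{2}=O(\delta)$. This product is $o_{m}(1)$ as soon as $\delta=o(\pi_{\max}^{-1/2})$, which together with $\delta^{2}\gtrsim\log m/\overline{d}_{\min}$ is exactly the stated hypothesis $mp=\Omega_{m}(\pi_{\max}\log m)$. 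The fix for your proof is therefore to replace the raw bound $\|D^{-1}-\mathbb{E}(D)^{-1}\|_{2}\,\|\mathbb{E}(A)\|_{2}$ by the factorisation $\bigl(D^{-1}\mathbb{E}(D)-I\bigr)\mathbb{E}(D)^{-1}\mathbb{E}(A)$ (or the paper's variant above), whose second factor has $\|\cdot\|_{2}$ of order $O(\sqrt{\pi_{\max}})$ rather than $O(\overline{d}_{\max})$. Your first term, $\|D^{-1}(A-\mathbb{E}A)\|_{2}$, does come out $o_{m}(1)$ under the stated hypothesis even without assuming $\pi_{\max}$ bounded, because the square root in the Bernstein bound absorbs one power of $\pi_{\max}$; so only the degree-error piece needs repair.
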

\begin{proof}
Consider statement $i)$. Let $\Sigma\sim\Gpp$. For $1\leq i<j\leq m$ let $E^{i,j}$ be the matrix with all zero entries except for entry $1$ in position $(i,j)$. Let $X_{i,j}$ be the random symmetric matrix $X_{i,j}=\xi_{i,j}(E^{i,j}+E^{j,i})$, where $\xi_{i,j}\sim Bernoulli(p_{i,j})$ for $p_{i,j}:=(\pi(u_{i})\pi(u_{j}^{-1})+\pi(u_{i}^{-1})\pi(u_{j}))p.$
For each $i,j;$ $$\vert\vert X_{i,j}-\mathbb{E}(X_{i,j})\vert\vert_{2}\leq \max \left\{\vert\vert X_{i,j}-\mathbb{E}(X_{i,j})\vert \vert_{1},\vert\vert X_{i,j}-\mathbb{E}(X_{i,j})\vert \vert_{\infty}\right\}\leq1.$$
Furthermore, for each $i,j$; $$ \mathbb{E}[(X_{i,j}-\mathbb{E}(X_{i,j}))(X_{i,j}-\mathbb{E}(X_{i,j}))^{T}]= p_{i,j}(1-p_{i,j})(E^{i,j}+E^{j,i}).$$

Let $X=\sum_{i,j}X_{i,j}$, so that $X=A(\Sigma)$.
Since $p_{i,j}\leq \pi_{max}2p$, we have that 
\begin{align*}
\sigma(X)&= \left\vert\left\vert\sum_{i,j}p_{i,j}(1-p_{i,j})(E^{i,j}+E^{j,i}) \right\vert \right\vert_{2}\\
&\leq \max \left\{\left\vert\left\vert\sum_{i,j}p_{i,j}(1-p_{i,j})(E^{i,j}+E^{j,i}) \right\vert \right\vert_{1},\left\vert\left\vert\sum_{i,j}p_{i,j}(1-p_{i,j})(E^{i,j}+E^{j,i}) \right\vert \right\vert_{\infty}\right\}\\&\leq 2\pi_{max}mp.
\end{align*}
Therefore, by Theorem \ref{thm: Matrix bernstein} (the matrix Bernstein inequality), letting $\omega$ be any function such that $
\omega=\Omega_{m}(1)$ and $mp=\Omega_{m}(\pi_{max}\omega \log m) :$
\begin{align*}
    \mathbb{P}\left(\vert \vert X-\mathbb{E}(X)\vert\vert_{2}\geq \sqrt{\pi_{max}mp\omega \log m}\right)&\leq \exp\left\{\dfrac{-\pi_{max}mp\omega \log m}{2mp\pi_{max}+\sqrt{mp\omega \log m}\slash 3}\right\}\\
    &\leq m^{-\omega\slash 100}\\
    &=o_{m}(1).
\end{align*}
Next, $\mathbb{E}(D)_{min}\geq \pi_{min}^{2}mp$. Note that $X=A(\Sigma),$ and so:
\begin{align*}
  &  \mathbb{P}\left(\vert \vert \mathbb{E}(D)_{min}^{-1}[A(\Sigma)-\mathbb{E}(A(\Sigma))]\vert\vert_{2} \geq \sqrt{\pi_{max}mp\omega \log m}\slash \pi_{min}mp\right)\\ &\leq  \mathbb{P}\left(\vert \vert A(\Sigma)-\mathbb{E}(A(\Sigma)\vert\vert_{2}\geq \sqrt{\pi_{max}mp\omega \log m}\right)\\&=o_{m}(1).
\end{align*}
Finally, since $mp=\Omega_{m}(\pi_{max}\log m)$, it is follows by a standard application of the Chernoff bounds (see e.g. \cite[Theorem 3.4]{frieze_karonski}) that with probability tending to $1$,
$$\vert \vert I-D(\Sigma)\mathbb{E}(D(\Sigma))^{-1}\vert\vert_{2}\leq o_{m}(1),$$
i.e. the actual degree of a vertex is close to its expected degree. Hence

\begin{align*}
    \vert \vert D^{-1}A(\Sigma)-\mathbb{E}(D)^{-1}A(\Sigma)\vert \vert_{2}&\leq \vert \vert I-\mathbb{E}(D)^{-1}D\vert \vert_{2}\vert \vert D^{-1}A(\Sigma)\vert \vert_{2}\\
    &\leq o_{m}(1)
\end{align*}
with probability $1-o_{m}(1)$, and the result follows. The remaining cases follow similarly.
\end{proof}
As $A(\Lpp)$ and $A(\Dpp)$ are formed by summing matrices of the above form, this immediately implies the following.

\begin{lemma}\label{lem: difference between adjacency matrix and expected adjacency matrix}
Let $\Sigma\sim \Lpp$ or $\Sigma\sim \Dpp$ where $(m_{1}+2m_{2})p=\Omega_{m}(\pi_{max}\log (m_{1}+2m_{2}))$. Then
$$\vert\vert D(\Sigma)^{-1}A(\Sigma)-\mathbb{E}(D(\Sigma)^{-1})\mathbb{E}(A(\Sigma))\vert \vert_{2}=o(1)$$
with probability tending to $1$ as $m_{1}+m_{2}$ tends to infinity.
\end{lemma}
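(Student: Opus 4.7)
The plan is to extend the matrix Bernstein argument from the proof of the previous lemma directly to the graphs $\Lpp$ and $\Dpp$, treating $A(\Sigma)$ as a single sum over all independent edge indicators rather than decomposing into its four blocks. For each pair of distinct vertices $v_{i},v_{j}\in V=V_{1}\sqcup V_{2}\sqcup V_{3}$ that is eligible to host an edge, set $X_{i,j}=\xi_{i,j}(E^{i,j}+E^{j,i})$, where $\xi_{i,j}\sim\mathrm{Bernoulli}(p_{i,j})$ and $p_{i,j}$ is whichever of the four prescribed probabilities applies to the pair. As before, the $X_{i,j}$ are independent, symmetric, rank-two, and satisfy $\lVert X_{i,j}-\mathbb{E}(X_{i,j})\rVert_{2}\leq 1$, so no new structural ingredient is needed.

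First I would bound each probability uniformly by $p_{i,j}\leq 2\pi_{max}^{2}p$ and estimate
\[
\sigma(A(\Sigma))=\left\lVert\sum_{i,j}p_{i,j}(1-p_{i,j})(E^{i,j}+E^{j,i})\right\rVert_{2}\leq 2\pi_{max}^{2}(m_{1}+2m_{2})p,
\]
using $\lVert\cdot\rVert_{2}\leq\max\{\lVert\cdot\rVert_{1},\lVert\cdot\rVert_{\infty}\}$ together with the observation that every row/column of the variance sum ranges over at most $m_{1}+2m_{2}$ entries. Applying Theorem \ref{thm: Matrix bernstein} with $t=\sqrt{\pi_{max}^{2}(m_{1}+2m_{2})p\,\omega\log(m_{1}+2m_{2})}$ for a function $\omega\to\infty$ sufficiently slowly, the hypothesis $(m_{1}+2m_{2})p=\Omega_{m}(\pi_{max}\log(m_{1}+2m_{2}))$ ensures that the variance term dominates $Lt/3$ in the denominator, yielding
\[
\lVert A(\Sigma)-\mathbb{E}(A(\Sigma))\rVert_{2}=o\bigl(\pi_{min}^{2}(m_{1}+2m_{2})p\bigr)
\]
with probability $1-o_{m_{1}+m_{2}}(1)$.

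Next, I would observe that every vertex of $\Sigma$ has expected degree of order $(m_{1}+2m_{2})p$: a vertex in $V_{1}$ receives contributions from edges to $V_{1}$, $V_{2}$, and $V_{3}$; a vertex in $V_{2}$ from edges to $V_{1}$ and $V_{3}$; and similarly for $V_{3}$. Hence $\mathbb{E}(D(\Sigma))_{min}=\Omega(\pi_{min}^{2}(m_{1}+2m_{2})p)$, and a standard Chernoff-bounds argument (exactly as in the proof of the previous lemma) gives $\lVert I-D(\Sigma)\mathbb{E}(D(\Sigma))^{-1}\rVert_{2}=o_{m_{1}+m_{2}}(1)$ asymptotically almost surely. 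Writing
\[
D(\Sigma)^{-1}A(\Sigma)-\mathbb{E}(D(\Sigma))^{-1}\mathbb{E}(A(\Sigma))=\bigl[I-\mathbb{E}(D)^{-1}D\bigr]D^{-1}A(\Sigma)+\mathbb{E}(D)^{-1}\bigl[A(\Sigma)-\mathbb{E}(A(\Sigma))\bigr],
\]
using $\lVert D^{-1}A(\Sigma)\rVert_{2}\leq 1$, and absorbing the discrepancy between $\mathbb{E}(D^{-1})$ and $\mathbb{E}(D)^{-1}$ by the same Chernoff concentration, delivers the required $o_{m_{1}+m_{2}}(1)$ bound.

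The only genuine obstacle, beyond routine bookkeeping, is confirming that the four distinct formulas for $p_{i,j}$ do not interact in an awkward way with the Bernstein estimates. This is handled uniformly by the crude bound $p_{i,j}\leq 2\pi_{max}^{2}p$ and by the observation that each vertex, regardless of the part of $V$ it inhabits, sees enough potential neighbours to produce expected degree $\Theta((m_{1}+2m_{2})p)$. Because both facts are immediate from the definitions of $\Lpp$ and $\Dpp$, the lemma follows verbatim from rerunning the previous proof on the combined adjacency matrix, as indicated in the text.
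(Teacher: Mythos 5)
Your proposal is correct and matches the paper's intent: the paper dismisses this lemma in one line, observing that $A(\Lpp)$ and $A(\Dpp)$ are still sums of independent rank-two edge indicators $X_{i,j}$ of the same form, so the matrix Bernstein plus Chernoff argument from the preceding lemma applies verbatim to the combined adjacency matrix, which is exactly what you spell out. One small imprecision worth tidying: $\lVert D^{-1}A(\Sigma)\rVert_{2}\leq 1$ is false in general for graphs with unequal degrees (the bound is $\sqrt{D_{\max}/D_{\min}}$); you should instead invoke the degree concentration you already establish to get $\lVert D^{-1}A(\Sigma)\rVert_{2}\leq 1+o_{m_{1}+m_{2}}(1)$ with high probability, which suffices.
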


\subsection{The first eigenvalue of \texorpdfstring{$\boldsymbol{\Gpp}$}{Gamma(m,pi,p)}}
We now turn to analysing the first eigenvalue of the graph $\Gpp$.
\begin{lemma}\label{lem: eigenvalue of MGpp}
Suppose that $m\geq 1$, and $p$ is such that that $mp=\Omega_{m}\left(\pi_{max}\log^{6}m\right).$ Then $ \mu_{2}(\MGpp)=o_{m}(1).$
\end{lemma}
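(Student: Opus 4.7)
The core observation is that the expected adjacency matrix of $\Sigma\sim\Gpp$ has rank (almost) $2$. Let $\pi,\pi^{*}\in\mathbb{R}^{m}$ denote the column vectors with entries $\pi(v_{i})$ and $\pi(v_{i}^{-1})$. Directly from the edge probabilities one has
\[
\mathbb{E}(A(\Sigma)) \;=\; p\bigl(\pi(\pi^{*})^{T}+\pi^{*}\pi^{T}\bigr)+E_{0},
\]
where $E_{0}$ is a diagonal correction (possibly zero, depending on the self-loop convention) of operator norm $O(p\pi_{\max}^{2})$. Summing rows yields $\mathbb{E}(D)_{i,i}=p\Pi(\pi(v_{i})+\pi(v_{i}^{-1}))+O(p\pi_{\max}^{2})$, where $\Pi:=\sum_{j}\pi(v_{j})$. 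The hypothesis $mp=\Omega(\pi_{\max}\log^{6}m)$, combined with the implicit constraint $p\leq 1/(2\pi_{\max}^{2})$ needed for the probabilities to be valid, forces $\pi_{\max}^{2}/m=o_{m}(1)$, which will make the diagonal correction negligible below.

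Since $M:=\MGpp=\mathbb{E}(D)^{-1}\mathbb{E}(A)$ is similar to the symmetric matrix $L:=\mathbb{E}(D)^{-1/2}\mathbb{E}(A)\mathbb{E}(D)^{-1/2}$, the two have identical real spectra, so I would work with $L$. Decompose $L=L_{0}+E$, with $L_{0}:=p\,\mathbb{E}(D)^{-1/2}(\pi(\pi^{*})^{T}+\pi^{*}\pi^{T})\mathbb{E}(D)^{-1/2}$ symmetric of rank $\leq 2$, and diagonal perturbation $E$ of operator norm $\|E\|_{2}=O(\pi_{\max}^{2}/(m\pi_{\min}^{2}))=o_{m}(1)$. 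To diagonalize $L_{0}$, I restrict it to its two-dimensional range $\mathrm{span}\{\mathbb{E}(D)^{-1/2}\pi,\,\mathbb{E}(D)^{-1/2}\pi^{*}\}$; in this basis $L_{0}$ acts as the $2\times 2$ matrix $p\begin{pmatrix}\gamma & \beta \\ \alpha & \gamma\end{pmatrix}$, where $\alpha:=\pi^{T}\mathbb{E}(D)^{-1}\pi$, $\beta:=(\pi^{*})^{T}\mathbb{E}(D)^{-1}\pi^{*}$, and $\gamma:=(\pi^{*})^{T}\mathbb{E}(D)^{-1}\pi$. The change of summation variable $i\mapsto i^{-1}$ (valid because $*$ is a fixed-point-free involution) gives $\alpha=\beta$, and a direct calculation yields $\alpha+\gamma=1/p$ up to an $o(1/p)$ self-loop correction. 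Hence the two non-zero eigenvalues of $L_{0}$ are $p(\gamma+\alpha)=1+o(1)$ and $p(\gamma-\alpha)$; the larger being (approximately) $1$ is consistent with $\mathbf{1}$ being a $1$-eigenvector of $M$.

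The remaining key step is the sign observation that $p(\gamma-\alpha)\leq 0$. Pairing inverse indices $\{v_{i},v_{i}^{-1}\}$ in $\alpha-\gamma$, each pair contributes a non-negative quantity, and one finds
\[
p(\alpha-\gamma) \;=\; \tfrac{1}{2}\sum_{\{v,\,v^{-1}\}}\frac{(\pi(v)-\pi(v^{-1}))^{2}}{\Pi\,(\pi(v)+\pi(v^{-1}))} \;\geq\; 0.
\]
Therefore the spectrum of $L_{0}$ in decreasing order is $1+o(1),\,0,\ldots,0,\,p(\gamma-\alpha)$, so $\mu_{2}(L_{0})=0$. Weyl's inequality then gives $|\mu_{2}(L)-\mu_{2}(L_{0})|\leq \|E\|_{2}=o_{m}(1)$, so $\mu_{2}(\MGpp)=\mu_{2}(L)=o_{m}(1)$, as required. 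The subtle step is the inverse-pairing argument establishing non-positivity of the second non-trivial eigenvalue; everything else is routine spectral reduction.
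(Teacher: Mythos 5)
Your proof is correct, and it takes a genuinely different route from the paper's. The paper observes that $\mathbb{E}(A(\Sigma))$ plus two auxiliary rank-$1$ ``square'' terms assembles into (twice) the expected adjacency matrix of a Chung--Lu graph $\Gamma(m,\underline{w})$ with $\underline{w}(x_i)=2(\pi(x_i)+\pi(x_i^{-1}))Np$; it then controls $\mu_{2}$ by Weyl's inequality, a Horn-type inequality for the diagonal conjugating factors $W^{-1}W_{i}$, and the Chung--Lu spectral theorem. Your argument instead diagonalizes the rank-$2$ matrix $\mathbb{E}(A)$ directly: passing to the symmetrization $L_{0}=p\,\mathbb{E}(D)^{-1/2}(\pi(\pi^{*})^{T}+\pi^{*}\pi^{T})\mathbb{E}(D)^{-1/2}$, restricting to its two-dimensional range, and using the involution $*$ to identify $\alpha=\beta$ and to write $p(\alpha-\gamma)$ as a sum of non-negative paired terms. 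That sign observation is the real content and has no analogue in the paper's proof; it shows that the second eigenvalue of the rank-$2$ model is \emph{exactly} zero, after which a single application of Weyl's inequality absorbs the diagonal self-loop correction $E$ (and you correctly note that $mp=\Omega(\pi_{\max}\log^{6}m)$ together with the implicit constraint $p\lesssim\pi_{\max}^{-2}$ makes $\|E\|_{2}=o_{m}(1)$). Your route is more elementary and self-contained -- it does not appeal to the probabilistic theorem of Chung--Lu \cite{chungrandomgraph} (which the paper invokes even though the relevant quantity here is deterministic), it avoids the auxiliary comparison graphs $\Gamma(m,\underline{v}),\Gamma(m,\underline{v}')$, and it isolates exactly where the fixed-point-freeness of $*$ is used. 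The only cosmetic slip is a spurious factor of $\tfrac12$ in the display for $p(\alpha-\gamma)$ when the sum is taken over unordered pairs $\{v,v^{-1}\}$; the non-negativity, which is all that matters, is unaffected.
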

\begin{proof}
Let $M=\MGpp$, and let $W$ be the expected degree matrix of $\Gpp$. Define \begin{itemize}
    \item  $N_:=\sum_{v\in V}\pi(v),$
    \item  $\underline{v}(x_{i})=\pi(x_{i})Np$, 
    \item $\underline{v}'(x_{i})=\pi(x_{i}^{-1})Np,$ and 
    \item $\underline{w}(x_{i})=2(\pi(x_{i})+\pi(x_{i}^{-1}))Np$.
\end{itemize}
Let $W_{1}=\mathbb{E}(D(\Gamma(m,\underline{v})))$ and $W_{2}=\mathbb{E}(D(\Gamma(m,\underline{v}'))$. Define similarly the matrices $A_{1}$ and $A_{2}$ as the expected adjacency matrices of the graphs $\Gamma(m,\underline{v})$ and $\Gamma (m,\underline{v'})$. Let $W'=\mathbb{E}(D(\Gamma(m,\underline{w}))$. Note that $(WW_{1}^{-1})_{max}\leq 1\slash \pi_{min}.$

Let $M'=M+W^{-1}A_{1}+W^{-1}A_{2}=2M(\Gamma(m,\underline{w})).$ By our assumptions, we have that $w_{min}=\Omega (w_{max}\sqrt{\overline{w}}\log^{3}(m))$, and similarly for $\underline{v}$ and $\underline{v}'$. Therefore $\mu_{2}(M')=o_{m}(1)$ by \cite[Theorem 5]{chungrandomgraph} and Lemma \ref{lem: difference between adjacency matrix and expected adjacency matrix}. Similarly $$\mu_{1}(-W_{1}^{-1}A_{1})=-\mu_{m}(W_{1}^{-1}A_{1})=o_{m}(1),$$ and $\mu_{2}(-W_{2}^{-1}A_{2})=o_{m}(1)$.

By Weyl's inequality and the Courant-Fischer theorem, we have that
\begin{align*}
    \mu_{2}(M)&=\mu_{2}\bigg(M'-W^{-1}W_{1}(W_{1}^{-1}A_{1})-W^{-1}W_{2}(W_{2}^{-1}A_{2})\bigg)\\
    &\leq \mu_{2}(M')+[W^{-1}W_{1}]_{max}\mu_{1}(-W_{1}^{-1}A_{1})+[W^{-1}W_{2}]_{max}\mu_{1}(-W_{2}^{-1}A_{2})\\
    &\leq \mu_{2}(M')+\mu_{1}(-W_{1}^{-1}A_{1})+\mu_{1}(-W_{2}^{-1}A_{2})\\
    & \leq o_{m}(1).
\end{align*}
\end{proof}

\subsection{The first eigenvalue of \texorpdfstring{$\boldsymbol{\Bpp}$}{B(m,pi,p)}}

We now aim to prove a similar theorem for $\Bpp$. In particular, we prove the following.

\begin{lemma}\label{lem: eigenvalue of MBpp}
Suppose that $m_{1},m_{2}\geq 1$, and $p$ is such that that $$\min\{m_{1},m_{2}\}p=\Omega_{m_{1}}\left(\pi_{max}\log^{6}(m_{1}+m_{2})\right).$$ Then $ \mu_{2}(\MBpp )\leq o_{m_{1}}(1).$
\end{lemma}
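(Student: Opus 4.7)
The proof is a direct computation that exploits the bipartite structure of $\Bpp$. Because edges only cross between $V_{1}$ and $V_{2}$ and the edge probabilities factor as $\pi(u)\pi(v)p$, the expected adjacency matrix $\mathbb{E}(A(\Sigma))$ (for $\Sigma\sim\Bpp$) is a block off-diagonal matrix whose off-diagonal blocks are each rank-one outer products; hence $\mathbb{E}(A(\Sigma))$ has rank $2$, and so does $\MBpp=\mathbb{E}(D(\Sigma))^{-1}\mathbb{E}(A(\Sigma))$. The strategy is therefore to write $\MBpp$ out explicitly and identify its two nonzero eigenvalues by inspection, rather than going through a Chung--Lu comparison as in the proof of Lemma \ref{lem: eigenvalue of MGpp}.

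Concretely, write $\pi_{1}\in\mathbb{R}^{m_{1}}$ and $\pi_{2}\in\mathbb{R}^{m_{2}}$ for the weight vectors restricted to each side, and set $N_{i}=\sum_{w\in V_{i}}\pi(w)$. Then
\begin{equation*}
\mathbb{E}(A(\Sigma))=\begin{pmatrix} 0 & p\pi_{1}\pi_{2}^{T} \\ p\pi_{2}\pi_{1}^{T} & 0 \end{pmatrix},\qquad \mathbb{E}(D(\Sigma))=p\begin{pmatrix} N_{2}\,\mathrm{diag}(\pi_{1}) & 0 \\ 0 & N_{1}\,\mathrm{diag}(\pi_{2}) \end{pmatrix},
\end{equation*}
so a direct calculation yields
\begin{equation*}
\MBpp=\begin{pmatrix} 0 & \mathbf{1}_{m_{1}}\pi_{2}^{T}/N_{2} \\ \mathbf{1}_{m_{2}}\pi_{1}^{T}/N_{1} & 0 \end{pmatrix}.
\end{equation*}
This matrix is similar to the symmetric matrix $\mathbb{E}(D(\Sigma))^{-1\slash 2}\mathbb{E}(A(\Sigma))\mathbb{E}(D(\Sigma))^{-1\slash 2}$, so its eigenvalues are real, and it has rank exactly $2$.

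To identify the two nonzero eigenvalues I test the natural candidates. The all-ones vector $\mathbf{1}_{V}$ satisfies $\MBpp\,\mathbf{1}_{V}=\mathbf{1}_{V}$, giving eigenvalue $+1$; the vector equal to $+1$ on $V_{1}$ and $-1$ on $V_{2}$ is sent to its negative, giving eigenvalue $-1$. Since the rank is $2$, the remaining $m_{1}+m_{2}-2$ eigenvalues must all be zero, so the spectrum of $\MBpp$ is exactly $\{1,0,\dots,0,-1\}$. Ordering by decreasing value yields $\mu_{1}(\MBpp)=1$ and $\mu_{2}(\MBpp)=0$, which trivially satisfies $\mu_{2}(\MBpp)\leq o_{m_{1}}(1)$.

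There is no real obstacle to this argument; in particular the hypothesis $\min\{m_{1},m_{2}\}p=\Omega_{m_{1}}(\pi_{max}\log^{6}(m_{1}+m_{2}))$ is not used. It is presumably stated here for uniformity with Lemma \ref{lem: eigenvalue of MGpp} and for downstream applications, where the same lower bound on $p$ is required in order to combine this spectral estimate with Lemma \ref{lem: difference between adjacency matrix and expected adjacency matrix} and transfer the bound from $\MBpp$ to the random matrix $D(\Sigma)^{-1}A(\Sigma)$.
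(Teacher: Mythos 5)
Your proof is correct, and it takes a genuinely different route from the paper's. The paper's proof of this lemma mirrors the proof of Lemma \ref{lem: eigenvalue of MGpp}: it introduces the diagonal-block correction matrices $A_{1}$ (entries $\pi(u_{i})\pi(u_{j})p$ on $V_{1}\times V_{1}$) and $A_{2}$ (entries $\pi(v_{i})\pi(v_{j})p$ on $V_{2}\times V_{2}$), observes that $M+W^{-1}A_{1}+W^{-1}A_{2}$ is twice the $M$-matrix of a Chung--Lu graph $\Gamma(m_{1}+m_{2},\underline{w})$, and then deduces the bound via \cite[Theorem 5]{chungrandomgraph} and Weyl's inequality, exactly as in Lemma \ref{lem: eigenvalue of MGpp}. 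This route genuinely uses the hypothesis $\min\{m_{1},m_{2}\}p=\Omega_{m_{1}}(\pi_{max}\log^{6}(m_{1}+m_{2}))$, because Chung's theorem requires a lower bound on the expected degrees.

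Your approach instead exploits the observation that the two off-diagonal blocks of $\mathbb{E}(A(\Sigma))$ are the rank-one outer products $p\pi_{1}\pi_{2}^{T}$ and $p\pi_{2}\pi_{1}^{T}$, so $\MBpp$ is exactly rank $2$. Your identification of the eigenvectors $\vect{1}_{V}$ (eigenvalue $1$) and $\vect{1}_{V_{1}}-\vect{1}_{V_{2}}$ (eigenvalue $-1$) is easily verified: for instance, applying the top block to $\vect{1}_{V_{2}}$ gives $\pi_{2}^{T}\vect{1}_{V_{2}}/N_{2}=1$ in each coordinate. Combined with the rank computation this pins down the full spectrum as $\{1,0,\dots,0,-1\}$, so $\mu_{2}(\MBpp)=0$ exactly. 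This is a sharper conclusion than the paper's $o_{m_{1}}(1)$ and, as you correctly note, requires no hypothesis on $p$ at all; the degree hypothesis is only needed downstream, to transfer the bound from $\MBpp$ to the random matrix $D(\Sigma)^{-1}A(\Sigma)$ via Lemma \ref{lem: difference between adjacency matrix and expected adjacency matrix}. Your argument is both more elementary and strictly stronger, and it makes transparent why a bipartite random graph with rank-one edge-probability profile has no spectral gap issue at the level of the expected matrix.

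One small remark on presentation: it is worth saying explicitly that $\mathbb{E}(D(\Sigma))$ is invertible because $\pi\geq\pi_{min}>0$ and $p>0$, so the similarity transform by $\mathbb{E}(D(\Sigma))^{1/2}$ is well defined; you gesture at this when invoking similarity to $\mathbb{E}(D)^{-1/2}\mathbb{E}(A)\mathbb{E}(D)^{-1/2}$, but it merits a sentence.
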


\begin{proof}
The proof proceeds as in the proof of Theorem \ref{lem: eigenvalue of MGpp}. Let $A_{1}$ be the matrix with $(A_{1})_{i,j}=\pi(u_{i})\pi (u_{j})p$ for $i,j\leq m$ and $0$ otherwise. Let $A_{2}$ be the matrix with $(A_{2})_{i,j}=\pi(v_{i})\pi(v_{j})p$ for $i,j> m$ and $0$ otherwise.

Then $M'=M+W^{-1}A_{1}+W^{-2}A_{2}=2M(\Gamma(m_{1}+m_{2},\underline{w})),$ where $\underline{w}$ is defined similarly to the previous lemma. We observe $$\mu_{2}(M'),\mu_{1}(-W^{-1}A_{1}),\mu_{1}(-W^{-1}A_{2})=o_{m_{1}+m_{2}}(1).$$ The proof now follows similarly to Lemma \ref{lem: eigenvalue of MGpp}.
\end{proof}

\subsection{The first eigenvalue of \texorpdfstring{$\boldsymbol{\Lpp}$}{Lambda(m1,m2,pi,p)}}

We now turn to analysing the eigenvalues of $\Lpp$. 
\begin{lemma}\label{lem: eigenvalue of MLpp}
Suppose that $m_{1},m_{2}\geq 1$, and $p$ is such that that $$\min\{m_{1},m_{2}\}p=\Omega_{m_{1}}\left(\pi_{max}\log^{6}(m_{1}+m_{2})\right).$$  Then $$ \mu_{2}(\MLpp )\leq o_{m_{1}}(1).$$
\end{lemma}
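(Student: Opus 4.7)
The plan is to follow the strategy of Lemmas \ref{lem: eigenvalue of MGpp} and \ref{lem: eigenvalue of MBpp}: augment the expected adjacency $\mathbb{E}(A(\Lpp))$ by suitable deterministic auxiliary matrices so that the sum equals, up to a fixed multiplicative constant, the expected adjacency of a Chung-type random graph $\Gamma(m_1+2m_2,\underline{w})$, then apply the cited theorem of Chung together with Weyl's inequality and the Horn-type bound to transfer the eigenvalue estimate back to $\MLpp$.

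Concretely, $\mathbb{E}(A(\Lpp))$ has non-zero entries only between $V_1$ and $V_2\sqcup V_3$. I would add: a $\Gpp$-style block $A_1$ on $V_1\times V_1$ with $(A_1)_{u_i,u_j}=(\pi(u_i)\pi(u_j^{-1})+\pi(u_i^{-1})\pi(u_j))p$; Chung-style blocks $A_2,A_3$ on $V_2\times V_2$ and $V_3\times V_3$ with entries $\pi(v_i)\pi(v_j)p$; and a $\Bpp$-style block $A_4$ on $V_2\times V_3$ with $(A_4)_{v_i,v_j^{-1}}=\pi(v_i)\pi(v_j)p$. The weights for the target Chung graph are $\underline{w}(u_i)\propto\pi(u_i)+\pi(u_i^{-1})$ and $\underline{w}(v_j)=\underline{w}(v_j^{-1})\propto\pi(v_j)$; with the correct scaling the identity
\[
M(\Lpp)+W^{-1}\sum_{i=1}^{4} A_i \;=\; c\cdot M(\Gamma(m_1+2m_2,\underline{w}))
\]
will hold, and the hypothesis $\min\{m_1,m_2\}p=\Omega_{m_1}(\pi_{\max}\log^{6}(m_1+m_2))$ translates into the Chung condition $w_{\min}=\Omega(\sqrt{\bar{w}}\log^{3}(m_1+2m_2))$, yielding $\mu_2$ of the right-hand side being $o_{m_1}(1)$.

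Each auxiliary matrix is a low-rank outer product: after normalising by its own expected degree matrix $W_i$, the matrix $W_i^{-1}A_i$ is a rank-one projection with top eigenvalue $1$ on the constant vector of its support and all remaining eigenvalues $0$, so $\mu_1(-W_i^{-1}A_i)=0$. The Horn-type inequality together with $[W^{-1}W_i]_{\max}=O(\pi_{\max}/\pi_{\min})$ (computed as in the proof of Lemma \ref{lem: eigenvalue of MGpp}) then gives $\mu_1(-W^{-1}A_i)=o_{m_1}(1)$, and three applications of Weyl yield $\mu_2(\MLpp)\leq o_{m_1}(1)$.

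The main obstacle is that, unlike for $\Bpp$, the cross edge-probabilities in $\Lpp$ do not factor as a single product $w(x)w(y)p$, since they depend asymmetrically on $\pi(u_i)$ vs.\ $\pi(u_i^{-1})$ depending on whether the endpoint lies in $V_2$ or $V_3$. Producing a clean Chung-graph identity therefore requires either an additional cross-term auxiliary matrix obtained by swapping the roles of $V_2$ and $V_3$, or a block-diagonalisation using the $\mathbb{Z}/2$-symmetry of $\Lpp$ under the simultaneous involution $u_i\leftrightarrow u_i^{-1}$, $v_j\leftrightarrow v_j^{-1}$, which decomposes the spectrum into symmetric and antisymmetric parts that can be treated separately. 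Verifying that the antisymmetric part contributes only $o_{m_1}(1)$ (rather than a bounded constant) is the delicate part, and I expect this is where the bulk of the calculation lies.
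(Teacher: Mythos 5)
Your proposal attempts to extend the auxiliary-matrix-plus-Weyl strategy of Lemmas~\ref{lem: eigenvalue of MGpp} and~\ref{lem: eigenvalue of MBpp} to $\Lpp$, but you correctly identify the obstruction yourself: there is no single weight function $w$ on $V_{1}$ for which $w(u_{i})w(v_{j})\propto\pi(u_{i}^{-1})\pi(v_{j})$ (the $V_{1}\times V_{2}$ probability) \emph{and} $w(u_{i})w(v_{j}^{-1})\propto\pi(u_{i})\pi(v_{j})$ (the $V_{1}\times V_{3}$ probability) simultaneously, unless $\pi=\pi\circ *$. So the claimed identity $M(\Lpp)+W^{-1}\sum A_{i}=c\cdot M(\Gamma(m_{1}+2m_{2},\underline{w}))$ cannot hold, and the two workarounds you float (extra cross-term matrices, or a $\mathbb{Z}/2$-equivariant block-diagonalisation) are left unexecuted; you explicitly defer the key verification that the ``antisymmetric'' contribution is $o_{m_{1}}(1)$ rather than bounded. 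That deferred step is exactly where the lemma lives, so as written the proposal has a genuine gap rather than a finished proof. A secondary inaccuracy: the $\Gpp$-style block $A_{1}$ with entries $(\pi(u_{i})\pi(u_{j}^{-1})+\pi(u_{i}^{-1})\pi(u_{j}))p$ is a rank-two symmetric matrix (an outer-product sum $\vect{a}\vect{b}^{T}+\vect{b}\vect{a}^{T}$), not a rank-one projection, so $\mu_{1}(-W_{1}^{-1}A_{1})$ is not zero; you would instead need to invoke Lemma~\ref{lem: eigenvalue of MGpp} to get an $o_{m_{1}}(1)$ bound, which is not what you claim.

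The paper sidesteps this entire difficulty by abandoning the Chung-graph reduction for $\Lpp$ and arguing variationally instead. Writing $M=W_{1}M_{1}+W_{2}M_{2}$ with $M_{1}=M(\mathcal{B}(m_{1},m_{2},\pi,p))$ on $V_{1}\times V_{2}$ and $M_{2}=M(\mathcal{B}(m_{1},m_{2},\pi\circ *,p))$ on $V_{1}\times V_{3}$, one takes the Courant--Fischer maximiser $\vect{\phi}\perp\vect{1}$ and decomposes it as $\vect{\phi}=\vect{\xi}+\vect{\psi}$, where $\vect{\xi}$ is constant on each of $V_{1},V_{2},V_{3}$ with constants $\alpha,\beta,\gamma$ and each $\vect{\psi}_{i}\perp\vect{1}_{V_{i}}$. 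Because $\vect{\psi}\perp\vect{1}_{V_{1}\sqcup V_{2}}$ and $\vect{\psi}\perp\vect{1}_{V_{1}\sqcup V_{3}}$, Lemma~\ref{lem: eigenvalue of MBpp} together with $\vert\vert W_{i}\vert\vert_{2}\leq 1$ bounds $\langle W_{i}M_{i}\vect{\psi},\vect{\psi}\rangle$ by $\mu_{2}(M_{i})=o_{m_{1}}(1)$. For the structured part a direct computation gives $\langle M\vect{\xi},\vect{\xi}\rangle=\alpha(\beta+\gamma)(\vert V_{1}\vert+\vert V_{2}\vert)\leq 0$, since $\vect{\phi}\perp\vect{1}_{V}$ forces $\alpha m_{1}+(\beta+\gamma)m_{2}=0$ and hence $\alpha(\beta+\gamma)\leq 0$. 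This cleanly avoids the need for any global weight vector or $\mathbb{Z}/2$ decomposition. If you want to salvage your approach you would need to carry the $\mathbb{Z}/2$-equivariant splitting to completion and actually estimate the antisymmetric block, but the Rayleigh-quotient route is both shorter and robust to the asymmetry that derails the Chung reduction.
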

\begin{proof}
We will use the Rayleigh quotient theorem. We note that
\begin{align*}
   M:= \MLpp=W_{1}M_{1}+W_{2}M_{2},
\end{align*}
where $M_{1}$ corresponds to the edges in $V_{1}\times V_{2}$ and $M_{2}$ corresponds to the edges in $V_{1}\times V_{3}$ (normalised by the expected degrees). In particular $M_{1}=M(\mathcal{B}(m_{1},m_{2},\pi,p))$, and $M_{2}=M(\mathcal{B}(m_{1},m_{2},\pi\circ *,p))$. Each $M_{i}$ has largest eigenvalue equal to $1$, with corresponding eigenvector $\vect{1}_{V_{1}\sqcup V_{2}}$ and $\vect{1}_{V_{1}\sqcup V_{3}}$ respectively. Similarly, the first eigenvalue of $M$ is $1$, with corresponding eigenvector $\vect{1}_{V_{1}\sqcup V_{2}\sqcup V_{3}}$.

Therefore, to estimate $\mu_{2}(M)$, we analyse
$$\mu_{2}:=\max\limits_{\substack{\vect{x}\perp \vect{1}_{V}\\ \vert \vert \vect{x}\vert \vert =1}}\langle M\vect{x},\vect{x}\rangle.$$ Let $\vect{\phi}$ be a unit vector with $\vect{\phi}\cdot \vect{1}_{V}=0$ achieving the above maximum, whose existence is guaranteed by the Courant-Fischer theorem. We may write 
\begin{align*}
    \vect{\phi}&=\vect{\xi}+\vect{\psi}\\
    &=\begin{pmatrix}
    \alpha \vect{1}_{V_1}\\
    \beta \vect{1}_{V_{2}}\\
    \gamma \vect{1}_{V_{3}}
    \end{pmatrix}+\begin{pmatrix}
 \vect{\psi}_{1}\\
 \vect{\psi}_{{2}}\\
\vect{\psi}_{{3}}
    \end{pmatrix},
\end{align*}
where $\vect{\psi}_{i}\cdot \vect{1}_{V_{i}}=0$ for $i=1,2,3.$ Since $\vect{\phi}\cdot\vect{1}_{V}=0,$ we have that $$\alpha \vert V_{1}\vert+\beta\vert V_{2}\vert+\gamma \vert V_{3}\vert=\alpha m_{1}+(\beta +\gamma)m_{2}=0,$$ and so $\alpha(\beta+\gamma)\leq 0.$ 

Define $$N_{1}=\sum_{u\in V_{1}}\pi (u),\mbox{ and }N_{2}=\sum_{v\in V_{2}}\pi (v).$$ Furthermore, define 
\begin{align*}
D_{1}&=diag\bigg(\pi(u_{1}^{-1})N_{2},\hdots,\pi(u_{m_{1}}^{-1})N_{2},\underbrace{0,\hdots ,0}_{m_{2}},\pi(v_{1})N_{1},\hdots, \pi(v_{m_{2}})N_{1}\bigg),
\\D_{2}&=diag\bigg(\pi(u_{1})N_{2},\hdots,\pi(u_{m_{1}})N_{2},\pi(v_{1})N_{1},\hdots,\pi(v_{m_{2}})N_{1},\underbrace{0,\hdots ,0}_{m_{2}}\bigg).\end{align*}
Then $$M_{1}=D_{1}^{-1}\begin{pmatrix}
0&A_{1}&0\\
A_{1}^{T}&0&0\\
0&0&0,
\end{pmatrix}$$
where $(A_{1})_{i,j}=\pi(u_{i}^{-1})\pi(v_{j})$, and similarly $$M_{2}=D_{2}^{-1}\begin{pmatrix}
0&0&A_{2}\\
0&0&0\\
A_{2}^{T}&0&0,
\end{pmatrix},$$
where $(A_{2})_{i,j}=\pi(u_{i})\pi(v_{j})$.
We can see that 
\begin{align*}
     W_{1}&=diag\bigg(\dfrac{\pi(u_{1}^{-1})}{\pi(u_{1})+\pi(u_{1}^{-1})},\hdots,\dfrac{\pi(u_{m_{1}}^{-1})}{\pi(u_{m_1})+\pi(u_{m_1}^{-1})},\underbrace{0,\hdots ,0}_{m_{2}},\underbrace{1,\hdots, 1}_{m_{2}}\bigg),
\end{align*}
 so that $\vert \vert W_{1}\vert\vert_{2}\leq 1$. Similarly, \begin{align*}
     W_{2}&=diag\bigg(\dfrac{\pi(u_{1})}{\pi(u_{1})+\pi(u_{1}^{-1})},\hdots,\dfrac{\pi(u_{m_{1}})}{\pi(u_{m_1})+\pi(u_{m_1}^{-1})},\underbrace{1,\hdots, 1}_{m_{2}},\underbrace{0,\hdots ,0}_{m_{2}}\bigg),,
\end{align*} so that $\vert\vert W_{2}\vert\vert_{2}\leq 1$. Now, $\vert \vert \vect{\psi}\vert\vert \leq 1$ and $\vect{\psi}\cdot \vect{1}_{V_{1}\sqcup V_{2}}=0$, and so by the Courant-Fischer theorem:
$$\langle M_{1}\vect{\psi},\vect{\psi}\rangle\leq \mu_{2}(M_{1})\vert\vert \vect{\psi}\vert\vert^{2}\leq \mu_{2}(M_{1}).$$
Therefore:
\begin{align*}
    \langle W_{1}M_{1}\vect{\psi},\vect{\psi}\rangle &=\vect{\psi}^{T} W_{1}M_{1}\vect{\psi}\\
    &=\vect{\psi}^{T} W_{1}\vect{\psi}\vect{\psi}^{T}M_{1}\vect{\psi}\slash \vert\vert\vect{\psi}\vert\vert^{2}\\ &= \vect{\psi}^{T}M_{1}\vect{\psi}\left(\vect{\psi}^{T}W_{1}\vect{\psi}^{T}\slash \vect{\psi}^{T}\vect{\psi}\right)\\
    &\leq \vect{\psi}^{T}M_{1}\vect{\psi}\max_{\vect{x}\neq \vect{0}}\left(\vect{x}^{T}W_{1}\vect{x}^{T}\slash \vect{x}^{T}\vect{x}\right)\\
    &\leq\langle M_{1}\vect{\psi},\vect{\psi}\rangle \left(\vert \vert W_{1}\vert \vert_{2}\right)^{2}\\
    &\leq  \mu_{2}(M_{1}) \vert\vert \vect{\psi}\vert\vert^{2}\left(\vert \vert W_{1}\vert \vert_{2}\right)^{2}\\
    &\leq  \mu_{2}(M_{1}) \left(\vert \vert W_{1}\vert \vert_{2}\right)^{2}\\
    &\leq \mu_{2}(M_{1})=o_{m_{1}}(1).
\end{align*}

We can also see that $\langle W_{2}M_{2}\vect{\psi},\vect{\psi}\rangle\leq o_{m_{1}}(1).$ It therefore remains to analyse $\langle M\vect{\xi},\vect{\xi}\rangle.$ We have that 
\begin{align*}
M\vect{\xi}&=\begin{pmatrix}
\begin{pmatrix}
\dfrac{\gamma \pi (u_{1})+\beta \pi(u_{1}^{-1})}{\pi(u_{1})+\pi(u_{1}^{-1})}\\
\vdots\\
\dfrac{\gamma \pi (u_{m_1})+\beta \pi (u_{m_1}^{-1})}{\pi(u_{m_1})+\pi(u_{m_1}^{-1})}\\
\end{pmatrix}\\
\alpha\vect{1}_{V_2}\\
\alpha \vect{1}_{V_{3}}
\end{pmatrix}.
\end{align*}
Therefore, as $\vert V_{1}\vert ,\vert V_2\vert =\vert V_3\vert\geq 0$ and $*$ is fixed-point free:
\begin{align*}
    \langle M\vect{\xi},\vect{\xi}\rangle &=\sum_{u\in V_{1}}\dfrac{\alpha\gamma\pi(u)+\alpha\beta\pi(u^{-1})}{\pi(u)+\pi(u^{-1})} +\alpha\beta \vert V_2\vert +\alpha \gamma \vert V_3\vert\\
&=\alpha \gamma\vert V_{1}\vert+\alpha \beta \vert V_{1}\vert +\alpha\beta \vert V_2\vert +\alpha \gamma \vert V_3\vert\\ 
    &=\alpha(\beta +\gamma)(\vert V_{1}\vert +\vert V_2\vert)\leq 0.
\end{align*}
 Hence
\begin{align*}
    \mu_{2}(M)=\langle M\vect{\phi},\vect{\phi}\rangle
    &=\langle M\vect{\xi},\vect{\xi}\rangle+\langle W_{1}M_{1}\vect{\psi},\vect{\psi}\rangle+\langle W_{2}M_{2}\vect{\psi},\vect{\psi}\rangle\\
    &\leq \langle W_{1}M_{1}\vect{\psi},\vect{\psi}\rangle+\langle W_{2}M_{2}\vect{\psi},\vect{\psi}\rangle\\
    &\leq o_{m_{1}}(1).
\end{align*}
\end{proof}

\subsection{The first eigenvalue of \texorpdfstring{$\boldsymbol{\Dpp}$}{Delta(m1,m2,pi,p)}}

We finally have all the pieces in place to analyse the first eigenvalue of the random graph $\Dpp$. We will approach this in a similar manner to the previous subsection. However, we need three different results: we split the subsequent analysis into three smaller sections.

\subsubsection{The case \texorpdfstring{$\boldsymbol{m_{1}=\Omega_{m_{1}}(m_{2})}$}{m1=Omega(m2)}}
\begin{lemma}\label{lem: eigenvalue of Dpp m1=Omega(m2)}
Let $m_{1},m_{2}\geq 1$ with $m_{1}=\Omega_{m_{1}}(\pi_{max}m_{2})$. Suppose that $p$ satisfies $(m_{1}+2m_{2})p=\Omega_{m_{1}}(\pi_{max}\log^{6}(m_{1}+2m_{2}))$. Let $\Sigma\sim\Dpp$. Then with probability tending to $1$ as $m_{1}$ tends to infinity, 
$$\lambda_{1}(\Sigma)\geq 1-o_{m_{1}}(1).$$
\end{lemma}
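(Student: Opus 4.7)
The strategy mirrors the proof of Lemma \ref{lem: eigenvalue of MLpp}. By Lemma \ref{lem: difference between adjacency matrix and expected adjacency matrix}, I would first reduce to showing $\mu_{2}(\MDpp) = o_{m_{1}}(1)$, since $\lambda_{1}(\Sigma) = 1 - \mu_{2}(D(\Sigma)^{-1}A(\Sigma))$ (up to $o_{m_{1}}(1)$ error with probability $1 - o_{m_{1}}(1)$). The first eigenvalue of $\MDpp$ is $1$, with corresponding eigenvector $\vect{1}_{V}$, so by the Courant--Fischer theorem it suffices to bound $\langle \MDpp \vect{\phi}, \vect{\phi}\rangle$ over unit vectors $\vect{\phi} \perp \vect{1}_{V}$.

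Next, I would decompose $\MDpp = W_{\Gamma}M_{\Gamma} + W_{\Lambda}M_{\Lambda} + W_{\mathcal{B}}M_{\mathcal{B}}$, where $M_{\Gamma}, M_{\Lambda}, M_{\mathcal{B}}$ are the expected normalised adjacency matrices of the three natural subgraphs (a $\Gpp$ on $V_{1}$, a $\Lpp$ between $V_{1}$ and $V_{2} \cup V_{3}$, and a $\mathcal{B}(m_{2}, m_{2}, \pi|_{V_{2}\cup V_{3}}, p)$ between $V_{2}$ and $V_{3}$), and the $W_{\star}$ are diagonal matrices of expected-degree ratios summing to the identity, each of operator norm at most $1$. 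Writing $N_{i} = \sum_{v \in V_{i}} \pi(v)$, under the hypothesis $m_{1} = \Omega_{m_{1}}(\pi_{max}m_{2})$ we have $N_{2}/(N_{1}+N_{2}) = o_{m_{1}}(1)$, so $W_{\Lambda}, W_{\mathcal{B}}$ are $o_{m_{1}}(1)$ on $V_{1}$ and $W_{\mathcal{B}}$ is $o_{m_{1}}(1)$ on $V_{2}\cup V_{3}$, while $W_{\Gamma} \approx I$ on $V_{1}$ and $W_{\Lambda} \approx I$ on $V_{2}\cup V_{3}$. I would then write $\vect{\phi} = \vect{\xi} + \vect{\psi}$ with $\vect{\xi} = \alpha \vect{1}_{V_{1}} + \beta \vect{1}_{V_{2}} + \gamma \vect{1}_{V_{3}}$ and $\vect{\psi}|_{V_{i}} \perp \vect{1}_{V_{i}}$, giving the orthogonality constraint $\alpha m_{1} + (\beta + \gamma)m_{2} = 0$. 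For the $\vect{\psi}$ contribution, each of the three pieces of $\langle \MDpp \vect{\psi}, \vect{\psi}\rangle$ is $o_{m_{1}}(1)\|\vect{\psi}\|^{2}$: the $M_{\Gamma}$ piece by Lemma \ref{lem: eigenvalue of MGpp}, the $M_{\Lambda}$ piece by Lemma \ref{lem: eigenvalue of MLpp} (whose hypothesis is met since $m_{1} + 2m_{2} \sim m_{1}$ under the current assumption), and the $W_{\mathcal{B}}M_{\mathcal{B}}$ piece because $\|W_{\mathcal{B}}\|_{2} = o_{m_{1}}(1)$ while $\|M_{\mathcal{B}}\|_{2} = O(1)$.

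The main obstacle is bounding $\langle \MDpp \vect{\xi}, \vect{\xi}\rangle$. A direct calculation gives $(\MDpp \vect{\xi})_{u} = \tfrac{N_{1}\alpha}{N_{1}+N_{2}} + \tfrac{N_{2}}{N_{1}+N_{2}} \cdot \tfrac{\pi(u^{-1})\beta + \pi(u)\gamma}{\pi(u)+\pi(u^{-1})}$ for $u \in V_{1}$, and $\tfrac{N_{1}\alpha + N_{2}\gamma}{N_{1}+N_{2}}$ for $v \in V_{2}$ (symmetrically on $V_{3}$). Taking the inner product with $\vect{\xi}$, the dominant $\tfrac{m_{1}N_{1}\alpha^{2}}{N_{1}+N_{2}}$ term cancels with $\tfrac{m_{2}N_{1}\alpha(\beta+\gamma)}{N_{1}+N_{2}}$ via the orthogonality constraint, leaving error contributions bounded by $\tfrac{N_{2}m_{1}|\alpha|(|\beta|+|\gamma|) + N_{2}m_{2}|\beta\gamma|}{N_{1}+N_{2}}$. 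Combining $\tfrac{N_{2}}{N_{1}+N_{2}} \leq \tfrac{\pi_{max}m_{2}}{\pi_{min}m_{1}}$ with Cauchy--Schwarz bounds of the form $m_{2}|\alpha\beta| \leq \tfrac{1}{2}\sqrt{m_{2}/m_{1}}(m_{1}\alpha^{2} + m_{2}\beta^{2})$ and the hypothesis $\pi_{max}m_{2}/m_{1} \to 0$, these terms are $o_{m_{1}}(1)\|\vect{\xi}\|^{2}$. A final subtlety is the cross term $2\langle \MDpp \vect{\xi}, \vect{\psi}\rangle$: because $\MDpp \vect{\xi}$ is not piecewise constant on $V_{1}$, this does not vanish, but its non-constant component on $V_{1}$ has pointwise magnitude $O\!\left(\tfrac{N_{2}}{N_{1}+N_{2}}(|\beta|+|\gamma|)\right)$, and Cauchy--Schwarz against $\vect{\psi}|_{V_{1}}$ yields a further $o_{m_{1}}(1)\|\vect{\phi}\|^{2}$ contribution, completing the bound $\mu_{2}(\MDpp) \leq o_{m_{1}}(1)$.
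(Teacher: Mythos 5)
Your strategy takes a genuinely different route from the paper: you try to carry the Rayleigh-quotient decomposition of Lemma \ref{lem: eigenvalue of MDpp m1=theta(m2)} (the balanced case $m_1 \asymp m_2$) over to the unbalanced regime $m_1 \gg m_2$, whereas the paper works directly with the random graph $\Sigma$, argues via the degree bound Lemma \ref{lem: max eigenvalue of adjacency matrix} that $\Vert A(\Sigma_2)\Vert_2$ and $\Vert A(\Sigma_3)\Vert_2$ are both $o_{m_1}(m_1p)$ while $D(\Sigma)_{\min}\geq \pi_{\min}^2 m_1 p /2$, concludes that the $\Lambda$- and $\mathcal{B}$-parts of $D(\Sigma)^{-1}A(\Sigma)$ are negligible in operator norm, and then only invokes the spectral gap for the $\Gpp$ subgraph (Lemma \ref{lem: eigenvalue of MGpp}).

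The central gap in your argument is the invocation of Lemma \ref{lem: eigenvalue of MLpp} for the $M_\Lambda$ piece: you assert that its hypothesis ``is met since $m_1+2m_2\sim m_1$,'' but the hypothesis of Lemma \ref{lem: eigenvalue of MLpp} is $\min\{m_1,m_2\}p=\Omega_{m_1}(\pi_{\max}\log^6(m_1+m_2))$, and in the present regime $m_2 = o(m_1/\pi_{\max})$, so $\min\{m_1,m_2\}=m_2$. The hypothesis of the current lemma only forces $m_1 p$ to be large; $m_2 p$ may be as small as one likes (in fact $\Lpp$ need not even be connected), and then Lemma \ref{lem: eigenvalue of MLpp} simply does not apply. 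You cannot substitute a weight bound for this piece either: $W_\Lambda$ is $\Theta(1)$ on $V_2\sqcup V_3$ (the $\Lambda$-edges dominate the degrees there), so $\Vert W_\Lambda\Vert_2$ is not $o_{m_1}(1)$, in contrast to $W_{\mathcal{B}}$. The paper's operator-norm argument sidesteps this by never requiring a spectral gap for $\Lpp$ or $\Bpp$ in this regime. A secondary, quantitative issue: your Cauchy--Schwarz bound on the cross-term yields an error of order $\pi_{\max}\sqrt{m_2/m_1}=\sqrt{\pi_{\max}}\cdot\sqrt{\pi_{\max}m_2/m_1}$, but the hypothesis only forces $\pi_{\max}m_2/m_1\to 0$, so this is not $o_{m_1}(1)$ unless $\pi_{\max}$ is bounded; your constraint-free split $\vect{\xi}=\alpha\vect{1}_{V_1}+\beta\vect{1}_{V_2}+\gamma\vect{1}_{V_3}$ is what introduces this non-constancy of $M\vect{\xi}$ on $V_1$, and taking $\beta=\gamma$ as in Lemma \ref{lem: eigenvalue of MDpp m1=theta(m2)} would make the cross-term vanish identically.
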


\begin{proof}
Since $m_{1}=\Omega_{m_{1}}(\pi_{max}m_{2})$, we have that $m_{1}p=\Omega_{m_{1}}(\pi_{max}\log^{6}(m_{1})).$ Let $\Sigma_{1}$ be the subgraph of $\Sigma$ corresponding to the edges in $V_{1}^{2}$, so that $\Sigma_{1}\sim \Gpp$. Similarly let  $\Sigma_{2}$ be the subgraph of $\Sigma$ corresponding to the edges in $V_{1}\times (V_{2}\sqcup V_{3})$, so that $\Sigma_{2}\sim \Lpp$. Finally, let $\Sigma_{3}$ be the subgraph of $\Sigma$ corresponding to the edges in $V_{2}\times V_{3}$, so that $\Sigma_{3}\sim \Bpp$.

We note that $D(\Sigma)_{min}\geq \pi_{min}^{2}m_{1}p\slash 2$ almost surely. Furthermore, with probability tending to $1$, 

$$\max_{v\in V_{1}(\Sigma_{2})}deg_{\Sigma_{2}}(v) =o_{m_{1}}(m_{1}p\slash \pi_{max}),$$
since $m_{1}=\Omega_{m_{1}}(m_{2}\pi_{max}).$ Similarly, for some fixed $C>0$, with probability tending to $1$: $$\max_{v\in V_{2}(\Sigma_{2})}deg_{\Sigma_{2}}(v) \leq C\pi_{max}m_{1}p.$$
By Lemma \ref{lem: max eigenvalue of adjacency matrix}, almost surely $$\max_{v\in V(\Sigma_{3})}deg_{\Sigma_{3}}(v)=o_{m_{1}}(m_{1}p\slash \pi_{max}).$$
We have that $A(\Sigma)=A(\Sigma_{1})+A(\Sigma_{2})+A(\Sigma_{3}),$ and by Lemma \ref{lem: max eigenvalue of adjacency matrix} we see that: $$\vert \vert A(\Sigma_{2})\vert\vert_{2},\vert \vert A(\Sigma_{3})\vert\vert_{2}=o_{m_{1}}(m_{1}p)$$ with probability tending to $1$. Therefore, $$\vert \vert D(\Sigma)^{-1}A(\Sigma_{2})\vert\vert_{2},\vert \vert D(\Sigma)^{-1}A(\Sigma_{3})\vert\vert_{2}=o_{m_{1}}(1)$$ almost surely. Hence $$\vert \vert D(\Sigma)^{-1}A(\Sigma)-D(\Sigma)^{-1}A(\Sigma_{1})\vert\vert_{2}=o_{m_{1}},$$ so that $\mu_{i}(D(\Sigma)^{-1}A(\Sigma))=\mu_{i}(D(\Sigma)^{-1}A(\Sigma_{1}))+o_{m_{1}}(1)$ for all $i$. By applying the Chernoff bounds, we see that almost surely 
$$\vert \vert D(\Sigma)\vert_{\Sigma_{1}}-D(\Sigma_{1})\vert \vert =o_{m_{1}}\min\left\{\vert \vert D(\Sigma)\vert_{\Sigma_{1}}\vert \vert ,\vert \vert D(\Sigma_{1})\vert \vert \right\},$$ so that
$$\vert \vert D(\Sigma)^{-1}A(\Sigma_{1})-D(\Sigma_{1})^{-1}A(\Sigma_{1})\vert\vert_{2}=o_{m_{1}}(1)$$
with probability tending to $1$. By assumption, $\Sigma_{1}\sim\Gpp$ with $m_{1}p=\Omega(\pi_{max}\log^{6}m_{1}),$ and hence $\mu_{2}(D(\Sigma_{1})^{-1}A(\Sigma_{1}))=o_{m_{1}}(1)$ by Lemmas \ref{lem: difference between adjacency matrix and expected adjacency matrix} and \ref{lem: eigenvalue of MGpp}. Hence $$\mu_{2}(D(\Sigma)^{-1}A(\Sigma))=o_{m_{1}}(1),$$ almost surely and the result follows.

\end{proof}
\subsubsection{The case o\texorpdfstring{$\boldsymbol{m_{2}=\Omega_{m_{2}}(m_{1})}$}{m1=Omega(m2)}}
\begin{lemma}\label{lem: eigenvalue of Dpp m2=Omega(m1)}
Let $m_{1},m_{2}\geq 1$ with $m_{2}=\Omega(\pi_{max}m_{1})$. Suppose that $p$ satisfies $(m_{1}+2m_{2})p=\Omega_{m_{2}}(\pi_{max}\log^{6}(m_{1}+2m_{2}))$. Let $\Sigma\sim\Dpp$. Then with probability tending to $1$ as $m_{2}$ tends to infinity, 
$$\lambda_{1}(\Sigma)\geq 1-o_{m_{2}}(1).$$
\end{lemma}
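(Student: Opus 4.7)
The plan is to mirror the proof of Lemma~\ref{lem: eigenvalue of Dpp m1=Omega(m2)}, now switching the roles so that the dominant contribution comes from the bipartite $V_{2}$--$V_{3}$ edges rather than the edges inside $V_{1}$. First, decompose $\Sigma = \Sigma_{1}\cup\Sigma_{2}\cup\Sigma_{3}$, where $\Sigma_{1}\sim\Gpp$ consists of the edges within $V_{1}$, $\Sigma_{2}\sim\Lpp$ consists of the edges between $V_{1}$ and $V_{2}\sqcup V_{3}$, and $\Sigma_{3}\sim\Bpp$ (with both parts of size $m_{2}$) consists of the edges in $V_{2}\times V_{3}$, so that $A(\Sigma)=A(\Sigma_{1})+A(\Sigma_{2})+A(\Sigma_{3})$. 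Since $m_{2}=\Omega_{m_{2}}(\pi_{max}m_{1})$, we have $m_{2}p=\Omega_{m_{2}}(\pi_{max}\log^{6}m_{2})$. The minimum expected degree in $\Sigma$ is $\Omega(\pi_{min}^{2}m_{2}p)$: vertices in $V_{2}\cup V_{3}$ realise this bound from $\Sigma_{3}$ alone, while vertices in $V_{1}$ realise it from their $2m_{2}$ potential neighbours in $\Sigma_{2}$. A standard Chernoff argument then gives $D(\Sigma)_{min}=\Omega(\pi_{min}^{2}m_{2}p)$ asymptotically almost surely.

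Next I would show that $A(\Sigma_{1})$ and $A(\Sigma_{2})$ are negligible against this denominator. By Lemma~\ref{lem: max eigenvalue of adjacency matrix}, $\vert\vert A(\Sigma_{1})\vert\vert_{2}$ is bounded by the maximum degree in $\Sigma_{1}$, which is $O(\pi_{max}^{2}m_{1}p)=o_{m_2}(m_{2}p)$ almost surely. Since $\Sigma_{2}$ is bipartite with parts $V_{1}$ and $V_{2}\sqcup V_{3}$ whose typical maximum degrees are $O(\pi_{max}^{2}m_{2}p)$ and $O(\pi_{max}^{2}m_{1}p)$ respectively, the bipartite bound in Lemma~\ref{lem: max eigenvalue of adjacency matrix} gives $\vert\vert A(\Sigma_{2})\vert\vert_{2}=O(\pi_{max}^{2}\sqrt{m_{1}m_{2}}\,p)=o_{m_2}(m_{2}p)$ almost surely. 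Combining these with the degree lower bound yields $\vert\vert D(\Sigma)^{-1}(A(\Sigma_{1})+A(\Sigma_{2}))\vert\vert_{2}=o_{m_{2}}(1)$, so by Weyl's inequality
\[\mu_{2}(D(\Sigma)^{-1}A(\Sigma))\leq \mu_{2}(D(\Sigma)^{-1}A(\Sigma_{3}))+o_{m_{2}}(1).\]

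It then remains to analyse $\mu_{2}(D(\Sigma)^{-1}A(\Sigma_{3}))$. The matrix $A(\Sigma_{3})$ vanishes on every row and column indexed by $V_{1}$, so $D(\Sigma)^{-1}A(\Sigma_{3})$ has eigenvalue $0$ with multiplicity at least $m_{1}$, and its remaining eigenvalues coincide with those of its restriction to $V_{2}\cup V_{3}$. On these vertices, $D(\Sigma)$ and $D(\Sigma_{3})$ differ only by the $\Sigma_{2}$-contribution, of size $O(\pi_{max}^{2}m_{1}p)=o(\pi_{min}^{2}m_{2}p)$ entrywise; consequently $\vert\vert D(\Sigma)^{-1}A(\Sigma_{3})-D(\Sigma_{3})^{-1}A(\Sigma_{3})\vert\vert_{2}=o_{m_{2}}(1)$. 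Applying Lemmas~\ref{lem: difference between adjacency matrix and expected adjacency matrix} and~\ref{lem: eigenvalue of MBpp} with parameters $(m_{2},m_{2},\pi\vert_{V_{2}\cup V_{3}},p)$ yields $\mu_{2}(D(\Sigma_{3})^{-1}A(\Sigma_{3}))=o_{m_{2}}(1)$ almost surely. Combining everything, $\mu_{2}(D(\Sigma)^{-1}A(\Sigma))=o_{m_{2}}(1)$, and Remark~\ref{rmk: switching between eigenvalues of graphs} gives the required $\lambda_{1}(\Sigma)\geq 1-o_{m_{2}}(1)$.

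The main subtlety I expect is that $\Sigma_{3}$ is bipartite, so $D(\Sigma_{3})^{-1}A(\Sigma_{3})$ carries an eigenvalue $-1$ paired with the Perron eigenvalue $1$; since $\mu_{2}$ denotes the algebraically second-largest eigenvalue rather than the second-largest in modulus, this does not obstruct the bound. Similarly, the zero eigenvalues coming from the $V_{1}$-block of $D(\Sigma)^{-1}A(\Sigma_{3})$ sit between the $o_{m_2}(1)$-level eigenvalues and $-1$ in the algebraic ordering, and so they do not interfere with the estimate on $\mu_{2}$ either.
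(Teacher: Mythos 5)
Your proof takes essentially the same route as the paper's: decompose $\Sigma$ into the $V_{1}$-internal piece $\Sigma_{1}\sim\Gpp$, the $V_{1}$-to-$(V_{2}\sqcup V_{3})$ piece $\Sigma_{2}\sim\Lpp$, and the bipartite piece $\Sigma_{3}\sim\Bpp$; use Lemma~\ref{lem: max eigenvalue of adjacency matrix} together with the degree lower bound to show the contributions of $\Sigma_{1}$ and $\Sigma_{2}$ are negligible once divided through by $D(\Sigma)$; and then reduce the estimate of $\mu_{2}(D(\Sigma)^{-1}A(\Sigma))$ to $\mu_{2}(D(\Sigma_{3})^{-1}A(\Sigma_{3}))$, handled by Lemmas~\ref{lem: difference between adjacency matrix and expected adjacency matrix} and~\ref{lem: eigenvalue of MBpp}. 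Your closing remarks about the zero eigenvalues supplied by the $V_{1}$-block and the $-1$ eigenvalue coming from bipartiteness of $\Sigma_{3}$ are helpful clarifications that the paper's terser argument leaves implicit, but they do not constitute a different approach.
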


\begin{proof}
Since $m_{2}=\Omega_{m_{2}}(\pi_{max}m_{1})$, we have that $m_{2}p=\Omega_{m_{2}}(\pi_{max}\log^{6}(m_{2})).$ Let $\Sigma_{1}$ be the subgraph of $\Sigma$ corresponding to the edges in $V_{1}^{2}$, so that $\Sigma_{1}\sim \Gpp$. Similarly let  $\Sigma_{2}$ be the subgraph of $\Sigma$ corresponding to the edges in $V_{1}\times (V_{2}\sqcup V_{3})$, so that $\Sigma_{2}\sim\Lpp$. Finally, let $\Sigma_{3}$ be the subgraph of $\Sigma$ corresponding to the edges in $V_{2}\times V_{3}$, so that $\Sigma_{3}\sim \Bpp$.

We note that $D(\Sigma)_{min}\geq \pi_{min}^{2}m_{2}\slash 2$ almost surely. Furthermore, we may deduce similarly to the previous lemma that, with probability tending to $1$: $$\vert \vert A(\Sigma_{1})\vert\vert_{2},\vert \vert A(\Sigma_{2})\vert\vert_{2}=o_{m_{2}}(m_{2}p\slash \pi_{max}).$$
We have that $A(\Sigma)=A(\Sigma_{1})+A(\Sigma_{2})+A(\Sigma_{3}).$ Therefore $$\vert \vert D(\Sigma)^{-1}A(\Sigma_{1})\vert\vert_{2},\vert \vert D(\Sigma)^{-1}A(\Sigma_{2})\vert\vert_{2}=o_{m_{2}}(1)$$ almost surely. Hence $$\vert \vert D(\Sigma)^{-1}A(\Sigma)\vert_{\Sigma_{3}}-D(\Sigma)^{-1}A(\Sigma_{3})\vert\vert_{2}=o_{m_{2}}(1)$$ almost surely, so that $\mu_{i}(D(\Sigma)^{-1}A(\Sigma))=\mu_{i}(D(\Sigma)^{-1}A(\Sigma_{3}))+o_{m_{2}}(1)$ for all $i$. Finally, 
$$\vert \vert D(\Sigma)^{-1}A(\Sigma_{3})-D(\Sigma_{3})^{-1}A(\Sigma_{3})\vert\vert_{2}\leq o_{m_{2}}(1)$$
with probability tending to $1$. By assumption, $\Sigma_{3}\sim\Bpp$ with $m_{2}p=\Omega_{m_{2}}(\pi_{max}\log^{6}m_{2}),$ so that $\mu_{2}(D(\Sigma_{3})^{-1}A(\Sigma_{3}))\leq o_{m_{2}}(1)$ by Lemmas \ref{lem: difference between adjacency matrix and expected adjacency matrix} and \ref{lem: eigenvalue of MBpp}. Therefore $$\mu_{2}(D(\Sigma)^{-1}A(\Sigma))\leq o_{m_{2}}(1),$$
as required.

\end{proof}
\subsubsection{The case \texorpdfstring{$\boldsymbol{m_{1}=\Theta ( m_{2})}$}{m1=Theta(m2)}}
We now turn to the final case.

\begin{lemma}\label{lem: eigenvalue of MDpp m1=theta(m2)}
Let $m_{1},m_{2}\geq 1$ with $$O_{m_{1}+m_{2}}(m_{2}\slash \pi_{max})\leq m_{1}\leq O_{m_{1}+m_{2}}(m_{2}\pi_{max}).$$ Suppose that $p$ satisfies $(m_{1}+2m_{2})p=\Omega_{m_{1}}(\pi_{max}^{2}\log^{6}(m_{1}+2m_{2}))$. Then 
$$\mu_{2}\left(\MDpp\right)\leq o_{m_{1}}(1).$$
\end{lemma}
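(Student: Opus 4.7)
The plan is to adapt the Rayleigh-quotient argument from the proof of Lemma \ref{lem: eigenvalue of MLpp}, combining all three preceding eigenvalue estimates. I would decompose the edges of $\Dpp$ as a superposition of three independent subgraphs---$\Sigma_G\sim\Gpp$ supported on $V_1$, $\Sigma_L\sim\Lpp$ carrying the bipartite edges between $V_1$ and $V_2\sqcup V_3$, and $\Sigma_B\sim\Bpp$ between $V_2$ and $V_3$---so that, writing $M:=\MDpp$ and $D:=\mathbb{E}(D(\Sigma))$,
$$M = \tilde{W}_GM_G + \tilde{W}_LM_L + \tilde{W}_BM_B,$$
where each $M_i\in\{\MGpp,\MLpp,\MBpp\}$ is the normalised expected adjacency matrix of the corresponding subgraph (extended by zero off its support) and each diagonal matrix $\tilde{W}_i=D^{-1}D_i$ has entries in $[0,1]$ with $\tilde{W}_G+\tilde{W}_L+\tilde{W}_B=I$, hence $\|\tilde{W}_i\|_2\le 1$. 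The hypothesis $(m_1+2m_2)p=\Omega_{m_1}(\pi_{max}^2\log^6(m_1+2m_2))$ combined with $m_1$ and $m_2$ lying within a factor of $\pi_{max}$ of one another forces $m_1p,\,m_2p=\Omega_{m_1}(\pi_{max}\log^6)$, so each of Lemmas \ref{lem: eigenvalue of MGpp}, \ref{lem: eigenvalue of MBpp}, and \ref{lem: eigenvalue of MLpp} applies and yields $\mu_2(M_i)=o_{m_1}(1)$ for $i\in\{G,L,B\}$.

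I would then run the Rayleigh-quotient argument of Lemma \ref{lem: eigenvalue of MLpp} on $M$: for a unit vector $\vect{\phi}\perp\vect{1}_V$ achieving $\mu_2(M)$, decompose $\vect{\phi}=\vect{\xi}+\vect{\psi}$ with $\vect{\xi}=\alpha\vect{1}_{V_1}+\beta\vect{1}_{V_2}+\gamma\vect{1}_{V_3}$ and $\vect{\psi}|_{V_i}\perp\vect{1}_{V_i}$ for each $i$. The fluctuation piece contributes
$$\sum_i\langle\tilde{W}_iM_i\vect{\psi},\vect{\psi}\rangle \le \sum_i\mu_2(M_i)\|\tilde{W}_i\|_2^2\|\vect{\psi}\|^2 = o_{m_1}(1),$$
using that $\vect{\psi}$ is orthogonal to the first eigenvector of each $M_i$ (a linear combination of $\vect{1}_{V_j}$'s). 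For the constant piece, set $N_k=\sum_{V_k}\pi$ and $N=N_1+N_2$. A direct computation of $(M\vect{\xi})(v)$ at each vertex type, together with the pairing identity
$$\sum_{v\in V_1}\frac{\beta\pi(v^{-1})+\gamma\pi(v)}{\pi(v)+\pi(v^{-1})}=\tfrac{m_1}{2}(\beta+\gamma),$$
obtained by grouping each $v\in V_1$ with its involution image, and the constraint $\alpha m_1+(\beta+\gamma)m_2=0$ from $\vect{\phi}\perp\vect{1}_V$, collapses the sum (with contributions from all three edge families added up) to
$$\langle M\vect{\xi},\vect{\xi}\rangle=-\frac{m_2N_2}{2N}(\beta-\gamma)^2\le 0.$$

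The main obstacle I foresee lies in justifying the identity
$$\langle M\vect{\phi},\vect{\phi}\rangle=\langle M\vect{\xi},\vect{\xi}\rangle+\sum_i\langle\tilde{W}_iM_i\vect{\psi},\vect{\psi}\rangle,$$
i.e.\ controlling the cross-terms $\langle M\vect{\xi},\vect{\psi}\rangle+\langle M\vect{\psi},\vect{\xi}\rangle$; because $M$ is not symmetric these do not vanish automatically. I would resolve this by passing to the symmetric normalised Laplacian $\mathcal{L}=D^{-1/2}AD^{-1/2}$ (which shares the spectrum of $M$), performing a $D$-weighted orthogonal decomposition of the test vector, and showing that the residual takes the form $\vect{\xi}^T A\vect{\psi}$ where the non-constant part of $A\vect{\xi}$ within each block is controlled by the range of $\pi(v^{-1})/(\pi(v)+\pi(v^{-1}))$ on $V_1$---a quantity bounded in $[\pi_{min}/2\pi_{max},\pi_{max}/2\pi_{min}]$. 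In the balanced regime $m_1=\Theta(m_2)$ (up to $\pi_{max}$), this residual is absorbed by the $\pi_{max}^2\log^6$ hypothesis on $(m_1+2m_2)p$ via Cauchy--Schwarz, giving a cross-term of size $o_{m_1}(1)$. Combining the three ingredients then yields $\mu_2(M)\le o_{m_1}(1)$, as required.
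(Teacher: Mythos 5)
Your decomposition of $M := \MDpp$ into weighted subgraph contributions from $\Gpp$, $\Lpp$, and $\Bpp$, and the Rayleigh-quotient splitting of the test vector into a piecewise-constant piece $\vect{\xi}$ and a fluctuation $\vect{\psi}$, follow the paper's proof of this lemma closely. You are also right that the cross-terms $\langle M\vect{\xi},\vect{\psi}\rangle + \langle M\vect{\psi},\vect{\xi}\rangle$ are the delicate point, since $M$ is not symmetric. The gap is in your resolution. With the three-constant $\vect{\xi} = \alpha\vect{1}_{V_1}+\beta\vect{1}_{V_2}+\gamma\vect{1}_{V_3}$, the restriction of $M\vect{\xi}$ to $V_1$ is the nonconstant function $u\mapsto(\gamma\pi(u)+\beta\pi(u^{-1}))/(\pi(u)+\pi(u^{-1}))$, and both this (after centring) and $\vect{\psi}_1$ are of unit order; hence the cross-term is generically $\Theta(1)$ and does not tend to zero as $m_{1}\to\infty$. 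The $\pi_{max}^{2}\log^{6}$ lower bound on $p$ only controls the fluctuation eigenvalues $\mu_2(M_i)$, not this mixed term, so the proposed Cauchy--Schwarz absorption does not close the argument. Passing to the $D$-weighted inner product would indeed kill the cross-term, but then $\vect{\xi}$ is no longer piecewise constant and the computation of $\langle M\vect{\xi},\vect{\xi}\rangle$ you carry out would have to be redone.

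The paper sidesteps the issue by choosing a \emph{coarser} decomposition for this lemma: $\vect{\xi} = \alpha\vect{1}_{V_1}+\beta\vect{1}_{V_2\sqcup V_3}$, with $\vect{\psi}_1\perp\vect{1}_{V_1}$ and $\vect{\psi}_2\perp\vect{1}_{V_2\sqcup V_3}$. Under $\alpha m_1 + 2\beta m_2 = 0$ this makes both cross-terms vanish exactly: one computes $M\vect{\xi} = \frac{N_1\alpha+N_2\beta}{N_1+N_2}\vect{1}_V$, a multiple of the Perron vector and hence orthogonal to $\vect{\psi}$; and, using the involution-pairing identity $\sum_{u\in V_1}\pi(u)/(\pi(u)+\pi(u^{-1})) = m_1/2$, the vector $M^{T}\vect{\xi} = AD^{-1}\vect{\xi}$ comes out proportional to the expected-degree vector with proportionality constant $(\alpha m_1/2 + \beta m_2)/(N_1+N_2) = 0$, so $\langle M\vect{\psi},\vect{\xi}\rangle = 0$ as well. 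With these cancellations the constant piece contributes $\langle M\vect{\xi},\vect{\xi}\rangle = 0$ (not just something nonpositive), and the estimate reduces cleanly to the three $o_{m_{1}}(1)$ fluctuation bounds from Lemmas \ref{lem: eigenvalue of MGpp}, \ref{lem: eigenvalue of MBpp}, and \ref{lem: eigenvalue of MLpp}. So: the overall strategy matches, but your choice of $\vect{\xi}$ creates a cross-term your argument does not control, and the correct fix is to collapse $\beta$ and $\gamma$ to a single constant.
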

\begin{proof}
We may write
\begin{align*}
    M:&=\MDpp=W_{1}M_{1}+W_{2}M_{2}+W_{3}M_{3},
\end{align*}
where $M_{1}=M(\Gpp)$, $M_{2}=M(\Lpp)$, and\\ $M_{3}=M(\Bpp)$.

Now, as previously, let $\vect{\phi}$ be a vector with $\norm{\phi}=1,$ $\vect{\phi}\cdot \vect{1}_{V}=0,$ and $$\mu_{2}(M)=\langle M\vect{\phi},\vect{\phi}\rangle.$$ This vector is guaranteed to exist by the Courant-Fischer theorem. We again write \begin{align*}
    \vect{\phi}&=\begin{pmatrix}
    \alpha\vect{1}_{V_{1}}\\
    \beta\vect{1}_{V_{2}\sqcup V_{3}}
    \end{pmatrix}+\begin{pmatrix}
   \vect{\psi}_{1}\\
   \vect{\psi}_{2}
    \end{pmatrix}\\
    &=\vect{\xi}+\vect{\psi},
\end{align*}
where $\vect{\psi}_{1}\cdot \vect{1}_{V_{1}}=\vect{\psi}_{2}\cdot \vect{1}_{V_{2}\sqcup V_{3}}=0.$
As we previously argued, by our assumptions on $p$ we have that:
$$\langle W_{i}M_{i}\vect{\psi},\vect{\psi}\rangle=o_{m_{1}}(1),$$
for $i=1,2,3.$

It therefore remains to analyse $\langle M\vect{\xi},\vect{\xi}\rangle.$ Recall that we defined $$N_{1}=\sum_{u\in V_{1}}\pi (u)\;N_{2}=\sum_{v\in V_{2}}\pi (v).$$ 
For $i\leq m_{1}$, we calculate $$M_{i,j}:=
\begin{cases}
\dfrac{\pi (u_{i})\pi(u_{j}^{-1})+\pi (u_{i}^{-1})\pi(u_{j})}{{(\pi(u_{i})+\pi(u_{i}^{-1}))(N_{1}+N_{2})}}: \;j\leq m_{1},\\
\dfrac{ \pi(u_{i}^{-1})\pi(v_{j})}{(\pi(u_{i})+\pi(u_{i}^{-1}))(N_{1}+N_{2})}: \;m_{1}+1\leq j\leq m_{1}+m_{2},\\
\dfrac{\pi (u_{i})\pi(v_{j})}{(\pi(u_{i})+\pi(u_{i}^{-1}))(N_{1}+N_{2})}: \;m_{1}+m_{2}+1\leq j.
\end{cases}$$
For $m_{1}+1\leq i\leq m_{1}+m_{2},$ we see that 
$$M_{i,j}:=
\begin{cases}
\dfrac{\pi (v_{i})\pi(u_{j}^{-1})}{\pi(v_{i})(N_{1}+N_{2})}: \;j\leq m_{1},\\
0: \;m_{1}+1\leq j\leq m_{1}+m_{2},\\
\dfrac{\pi (v_{i})\pi(v_{j})}{\pi(v_{i})(N_{1}+N_{2})}: \;m_{1}+m_{2}+1\leq j.
\end{cases}$$
For $m_{1}+m_{2}+1\leq i\leq m_{1}+2m_{2},$ we see that
$$M_{i,j}:=
\begin{cases}
\dfrac{\pi (v_{i})\pi(u_{j})}{\pi(v_{i})(N_{1}+N_{2})}: \;j\leq m_{1},\\
\dfrac{\pi(v_{i})\pi(v_{j})}{\pi(v_{i})(N_{1}+N_{2})}: \;m_{1}+1\leq j\leq m_{1}+m_{2},\\
0: \;m_{1}+m_{2}+1\leq j.\\
\end{cases}$$
We therefore have: 
$$M\vect{\xi}=\begin{pmatrix}
\dfrac{N_{1}\alpha +N_{2}\beta}{N_{1}+N_{2}}\vect{1}_{V_{1}}\\
\dfrac{N_{1}\alpha +N_{2}\beta}{N_{1}+N_{2}}\vect{1}_{V_{2}\sqcup V_{3}}
\end{pmatrix}.$$
Recall that $\vert V_{2}\vert=\vert V_{3}\vert.$ We calculate:
\begin{align*}
  &  \langle M\vect{\xi},\vect{\xi}\rangle=\begin{pmatrix}
\dfrac{N_{1}\alpha +N_{2}\beta}{N_{1}+N_{2}}\vect{1}_{V_{1}}\\
\dfrac{N_{1}\alpha +N_{2}\beta}{N_{1}+N_{2}}\vect{1}_{V_{2}\sqcup V_{3}}
\end{pmatrix}\cdot \begin{pmatrix}
\alpha\vect{1}_{V_{1}}\\
\beta\vect{1}_{V_{2}\sqcup V_{3}}\end{pmatrix}\\
&=\dfrac{N_{1}\alpha^{2}+N_{2}\alpha\beta }{N_{1}+N_{2}}\vert V_{1}\vert +\dfrac{N_{1}\alpha\beta +N_{2}\beta^{2}}{N_{1}+N_{2}}\vert V_{2}\sqcup V_{3}\vert\\
&=\dfrac{1}{N_{1}+N_{2}}\bigg[
N_{1}\alpha^{2} \vert V_{1}\vert+N_{2}\alpha\beta\vert V_{1}\vert+N_{1}\alpha \beta  \vert V_{2}\sqcup V_{3}\vert+N_{2}\beta^{2} \vert V_{2}\sqcup V_{3}\vert
\bigg]\\
&=\dfrac{N_{1}\alpha+N_{2}\beta}{N_{1}+N_{2}}\bigg[
\alpha \vert V_{1}\vert+\beta \vert V_{2}\sqcup V_{3}\vert\bigg]
\\&=0,
\end{align*}
since $$\vect{\xi}\cdot\vect{1}_{V}=\alpha \vert V_{1}\vert +\beta \vert V_{2}\sqcup V_{3}\vert=0.$$ 
Hence,
\begin{align*}
    \mu_{2}(M)=\langle M\vect{\phi},\vect{\phi}\rangle&=\langle M\vect{\xi},\vect{\xi}\rangle+\langle W_{1}M_{1}\vect{\psi},\vect{\psi}\rangle+\langle W_{2}M_{2}\vect{\psi},\vect{\psi}\rangle+\langle W_{3}M_{3}\vect{\psi},\vect{\psi}\rangle\\
    &\leq 0+o_{m_{1}}(1)+o_{m_{1}}(1)+o_{m_{1}}(1).
\end{align*}
\end{proof}

Applying Lemma \ref{lem: difference between adjacency matrix and expected adjacency matrix} to Lemmas \ref{lem: eigenvalue of Dpp m1=Omega(m2)}, \ref{lem: eigenvalue of Dpp m2=Omega(m1)}, and \ref{lem: eigenvalue of MDpp m1=theta(m2)}, we deduce the following.

\begin{lemma}\label{lem: eigenvalue of Dpp}
Let $m_{1},m_{2}\geq 1$. Suppose that $p$ satisfies $(m_{1}+2m_{2})p=\Omega_{m_{1}+2m_{2}}(\pi_{max}^{2}\log^{6}(m_{1}+2m_{2}))$. Let $\Sigma\sim \Dpp$. Then 
$$\lambda_{1}(\Sigma)\geq 1-o_{m_{1}+2m_{2}}(1)$$ with probability tending to $1$ as $m_{1}+2m_{2}$ tends to infinity.
\end{lemma}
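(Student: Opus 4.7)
The plan is a straightforward case analysis that assembles the three preceding lemmas into the single stated bound. The three ranges we consider are: $(i)$ $m_{1}=\Omega_{m_{1}+2m_{2}}(\pi_{max}m_{2})$, $(ii)$ $m_{2}=\Omega_{m_{1}+2m_{2}}(\pi_{max}m_{1})$, and $(iii)$ $O_{m_{1}+m_{2}}(m_{2}\slash \pi_{max})\leq m_{1}\leq O_{m_{1}+m_{2}}(m_{2}\pi_{max})$. Since the hypothesis $(m_{1}+2m_{2})p=\Omega(\pi_{max}^{2}\log^{6}(m_{1}+2m_{2}))$ is strictly stronger than the assumption $(m_{1}+2m_{2})p=\Omega(\pi_{max}\log^{6}(m_{1}+2m_{2}))$ required in $(i)$ and $(ii)$, and matches the assumption required in $(iii)$, it suffices for all three invocations. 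One first checks that together these ranges exhaust all possibilities: if $m_{1}=o(m_{2}\slash \pi_{max})$ then $(ii)$ holds, if $m_{2}=o(m_{1}\slash \pi_{max})$ then $(i)$ holds, and otherwise we are in $(iii)$.

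In cases $(i)$ and $(ii)$ the conclusion $\lambda_{1}(\Sigma)\geq 1-o(1)$ is precisely the content of Lemma \ref{lem: eigenvalue of Dpp m1=Omega(m2)} and Lemma \ref{lem: eigenvalue of Dpp m2=Omega(m1)} respectively; one should note that in those cases the dominating side is $\Theta(m_{1}+2m_{2})$, so the $o_{m_{1}}(1)$ or $o_{m_{2}}(1)$ term stated there upgrades to $o_{m_{1}+2m_{2}}(1)$. In case $(iii)$, Lemma \ref{lem: eigenvalue of MDpp m1=theta(m2)} gives $\mu_{2}(\MDpp)=o(1)$, where $\MDpp=\mathbb{E}(D)^{-1}\mathbb{E}(A)$. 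To pass from this to a bound on the random operator $D(\Sigma)^{-1}A(\Sigma)$ itself, invoke Lemma \ref{lem: difference between adjacency matrix and expected adjacency matrix}, which asserts
\[
\vert\vert D(\Sigma)^{-1}A(\Sigma)-\mathbb{E}(D(\Sigma))^{-1}\mathbb{E}(A(\Sigma))\vert\vert_{2}=o(1)
\]
a.a.s. Combined with Weyl's inequality, this yields $\mu_{2}(D(\Sigma)^{-1}A(\Sigma))=o(1)$. By Remark \ref{rmk: switching between eigenvalues of graphs}, this is equivalent to $\lambda_{1}(\Sigma)\geq 1-o(1)$, as required.

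There is essentially no obstacle beyond bookkeeping; the only mildly delicate point is verifying that the asymptotic notation $o_{m_{1}+2m_{2}}(1)$ is consistent across the three cases, in the sense that the implicit bound depends only on $m_{1}+2m_{2}$ and not on the specific ratio $m_{1}\slash m_{2}$. This is clear in case $(iii)$ from the proof of Lemma \ref{lem: eigenvalue of MDpp m1=theta(m2)}, and in cases $(i)$ and $(ii)$ follows from the observation noted above.
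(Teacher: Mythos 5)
Your proof is correct and matches the paper's approach exactly: the paper's one-sentence proof simply states that the lemma follows by applying Lemma \ref{lem: difference between adjacency matrix and expected adjacency matrix} to Lemmas \ref{lem: eigenvalue of Dpp m1=Omega(m2)}, \ref{lem: eigenvalue of Dpp m2=Omega(m1)}, and \ref{lem: eigenvalue of MDpp m1=theta(m2)}, and you have spelled out the same three-way case split (noting correctly that cases $(i)$ and $(ii)$ already conclude at the level of $\lambda_{1}(\Sigma)$ while case $(iii)$ requires the additional passage from $\MDpp$ to $D(\Sigma)^{-1}A(\Sigma)$). Your observation that the $o_{m_1}(1)$ and $o_{m_2}(1)$ terms upgrade to $o_{m_1+2m_2}(1)$ because the relevant parameter is comparable to $m_1+2m_2$ (up to $\pi_{max}$ factors, which still force both to tend to infinity together) is a useful clarification the paper leaves implicit.
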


%----------------------------------------------------------------------------------------
%   Property (T) in random groups
%----------------------------------------------------------------------------------------

%\input{Sections/Property_T_in_random_quotients}
\section{Property (T) in random quotients of hyperbolic groups}\label{sec: Property (T) in quotients of hyperbolic groups}
We can now turn to proving Theorem \ref{mainthm: property t in random quotients of hyperbolic groups}. We will first focus on a result for a slightly different model of random groups, where each relator is added independently with probability close to some $p$. We now fix $\omega$ a slow-growing function.

\begin{definition}
Given a word $w$, let $Path(w)$ be the set of paths in $C^{max}$ with label $w$. Note that for any word $w$, this set is of size at most $\vert V(C^{max})\vert.$
For a number $l-\omega (l)\leq L\leq l$, let $Sol_{l}(L)$ be the set of ordered triples $(a,b,c)$ with $a+b+c=L,l\slash 3-\omega(l)\slash 3 \leq a,b,c\leq l\slash 3+2$ (note that $\vert Sol_{l}(L)\vert \geq 1$ for all $l-\omega (l)\leq L\leq l$). 

The \emph{binomial model of random groups at length $\omega$-near $l$} is obtained as follows. Let $\Xi=\Xi(l)\leq \mu^{l-\omega(l)}.$ For each path $\gamma$ in $C^{max}$ of length between $l-\omega(l)$ and $l$, and each $(a,b,c)\in Sol(\vert \gamma\vert )$, add $lab(\gamma)$ to $R_{l;a,b,c}$ with probability $$\nu (\mathcal{Z}[\gamma,0])\Xi\slash (\omega(l)\slash 3)^{2}.$$ 

Let $\mathcal{R}_{l}=\sqcup_{a,b,c}\{x_{r}y_{r}z_{r}\;:\;r=(x_{r},y_{r},z_{r})\in R_{l;a,b,c}\},$ and consider $$G\slash \langle \langle \mathcal{R}_{l}\rangle\rangle.$$
\end{definition}

Note that $$Pr(w\in \mathcal{R}_{l})=(1+o_{l}(1))\vert Path(w)\vert\Xi \nu (\mathcal{Z}[w,0])\leq \vert V(C^{max})\vert K\slash \mu^{\vert w\vert}$$
for some constant $K>0.$
We can prove the following.
\begin{theorem}\label{thm: (T) in binomial hyperbolic quotients}
Let $G=\langle S\;\vert \; T\rangle $ be a finite presentation of a hyperbolic group with growth rate $\mathfrak{h}>0.$ Let $\mathcal{R}_{l}$ be obtained as above. If $\Xi\exp\{-\mathfrak{h}l\slash 3\}=\Omega_{l}(l^{7}),$ then $G\slash \langle\langle \mathcal{R}_{l}\rangle\rangle$ has Property (T) with probability tending to $1$ as $l$ tends to infinity.
\end{theorem}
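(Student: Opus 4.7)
The plan is to apply Corollary~\ref{cor: directed new spectral criterion} to the random graph $\Upsilon_l := \Upsilon(G,\mathcal{L}^G,R_l)$ by realising it (up to $1+o_l(1)$ factors) as an instance of the random graph model $\Delta(m_1,m_2,\pi,p,*)$ from Section~\ref{appendix: Spectral theory of restricted graphs} and invoking Lemma~\ref{lem: eigenvalue of Dpp}. First I would identify the vertices: for each triple $r=(x_r,y_r,z_r)\in R_l$, the words $x_r$ and $z_r$ are elements of $\mathcal{L}^G$ of length in $[l/3-\omega(l)/3,\,l/3+2]$. Partition these length-near-$l/3$ words into a set $V_1$ consisting of pairs $\{w,w^{-1}\}$ for which some $\mathcal{L}^G$-representative of $\overline{w}^{-1}$ also has length in this range (so that $V_1$ is closed under the formal involution $*$), and a set $V_2$ of the remaining words, adjoining $V_3 := V_2^{-1}$ as formal inverse vertices. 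Define the vertex weights $\pi(w) := \mu^{|w|}\nu(\mathcal{Z}[w,0])$; by definition of the Parry measure $\pi(w)=\alpha_{\mathrm{start}(w)}\beta_{\mathrm{end}(w)}$, so $\pi$ is bounded above and below by positive constants depending only on $G$ and hence $\pi_{\max}=O(1)$.

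Next I would match edge probabilities. For fixed $x$ and $z$ of lengths near $l/3$, independence of triple-selections in the binomial model gives
\begin{equation*}
\Pr\bigl[(x,z^{-1})\in E(\Upsilon_l)\bigr] \;=\; \frac{\Xi}{(\omega(l)/3)^{2}}\sum_{b,\,y}\nu(\mathcal{Z}[x,y,z,0]),
\end{equation*}
summed over $b\in[l/3-\omega(l)/3,\,l/3+2]$ and words $y$ of length $b$ with $xyz$ a valid path in $C^{\max}$. Lemma~\ref{lem: measure of sets between two paths} evaluates this inner sum, giving edge probability $(1+o_l(1))\pi(x)\pi(z)\cdot K\Xi\mu^{1-|x|-|z|}/\omega(l)$ for some constant $K>0$ depending only on $G$. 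This is exactly the shape demanded by the $\Delta$-model with parameter $p := K\Xi\mu^{1-2l/3}/\omega(l)$; the analogous computation handles the remaining edge-classes of $\Delta$ (using the edge-swapping provided by the pair of directed edges added per triple), and the edges are independent Bernoullis by construction of the binomial model (contributions from rare multi-edges are absorbed into lower-order terms).

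Now I would verify the hypothesis $(m_1+2m_2)p=\Omega_l(\pi_{\max}^2\log^6(m_1+2m_2))$ of Lemma~\ref{lem: eigenvalue of Dpp}. By Lemma~\ref{lem: evaluation map is bounded to one} and the definition of $\mathfrak{h}$, we have $m_1+m_2 = \Theta(\mu^{l/3}) = \Theta(e^{\mathfrak{h}l/3})$, hence $\log^6(m_1+2m_2)=\Theta(l^6)$. Meanwhile,
\begin{equation*}
(m_1+2m_2)p \;=\; \Theta\!\bigl(\mu^{l/3}\cdot \Xi\mu^{1-2l/3}/\omega(l)\bigr) \;=\; \Theta\!\bigl(\mu\cdot \Xi e^{-\mathfrak{h}l/3}/\omega(l)\bigr) \;=\; \Omega(l^7/\omega(l)),
\end{equation*}
using the hypothesis $\Xi e^{-\mathfrak{h}l/3}=\Omega_l(l^7)$. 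Since $\omega$ is slow-growing, $l^7/\omega(l)=\Omega(l^6)$, and the hypothesis is verified. Hence Lemma~\ref{lem: eigenvalue of Dpp} gives $\lambda_1(\Upsilon_l)\geq 1-o_l(1)>1/2$ asymptotically almost surely.

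Finally I would verify the remaining hypotheses of Corollary~\ref{cor: directed new spectral criterion}. The containment $\mathcal{L}^G_{\lfloor l/3\rfloor}\subseteq V(\Upsilon_l)$ follows from a Chernoff plus union bound: for each fixed $w\in\mathcal{L}^G_{\lfloor l/3\rfloor}$, the expected number of triples in $R_l$ with $x_r=w$ is $\Omega(\Xi e^{-\mathfrak{h}l/3})=\Omega(l^7)$ by Lemma~\ref{lem: measure of starting path}, so $w$ appears with failure probability $\exp(-\Omega(l^7))$, which easily beats the $\Theta(\mu^{l/3})$-size union bound. The uniform asymptotics $deg(g)=(1+o_l(1))deg(g^{-1})=(1+o_l(1))rep(g)$ follow from Lemmas~\ref{lem: measure of starting path} and~\ref{lem: measure of sets middle path}, which show the respective expectations agree up to $1+o_l(1)$ (the Parry measure treats start, end, and middle positions symmetrically in the limit), together with the same Chernoff-and-union-bound concentration argument. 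The main obstacle will be ensuring that $\Upsilon_l$ really is well-modelled by $\Delta(m_1,m_2,\pi,p,*)$ uniformly in the degree conditions: in particular, ruling out that multi-edges arising from several valid middles $y$ for a single pair $(x,z)$ perturb the spectrum, and verifying degree-concentration simultaneously across the structurally distinct vertex classes $V_1, V_2, V_3$.
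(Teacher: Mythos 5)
Your overall strategy is exactly the paper's: recognise $\Upsilon_l$ (equivalently its undirected collapse $\Psi$) as an instance of $\Dpp$ with the Parry-measure-derived weights, invoke Lemma~\ref{lem: eigenvalue of Dpp} to get $\lambda_1 \geq 1-o_l(1)$, verify the degree/rep ratios from the cylinder-set measure lemmas, and conclude via Corollary~\ref{cor: directed new spectral criterion}. So the architecture is sound.

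However, there is a genuine gap in your handling of multi-edges, and you identify it yourself without resolving it. You write that contributions from ``rare multi-edges are absorbed into lower-order terms,'' but this is not quite right: a pair $(x,z)$ receives one edge for \emph{each} middle word $y$ with $(x,y,z)\in R_l$, and at density $d>1/3$ the expected number of such doubled pairs grows like $\Xi^2\mu^{-2l/3}=\mu^{l(2d-2/3)}\to\infty$, so multi-edges are not rare in any absolute sense. What makes the argument work is structural, not a crude counting bound: the paper proves (Lemma~\ref{lem: double edges form matching}) that the double edges form a matching and there are no triple edges, provided $\Xi\leq\exp\{\mathfrak{h}ld\}$ for some $d<5/12$. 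A matching perturbation of the adjacency matrix has operator norm at most $1$, so collapsing the double edges perturbs $D^{-1}A$ by $o_l(1)$ once all degrees diverge. Crucially, to \emph{get into} this regime the paper also invokes monotonicity of (T) in the relator set, reducing $\Xi$ WLOG into the range $(\exp\{\mathfrak{h}l/3\},\,\exp\{5\mathfrak{h}l/12\})$; your argument as written has to handle $\Xi$ all the way up to $\mu^{l-\omega(l)}$, where the matching claim is false. Without both ingredients --- the monotonicity reduction and the matching structure --- the identification of $\Upsilon_l$ with a simple graph from the $\Dpp$ family does not go through.

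A secondary, more technical issue: your weight function $\pi(w):=\mu^{|w|}\nu(\mathcal{Z}[w,0])$ is indeed bounded in $(0,\infty)$, but then the resulting edge probability $\pi(x)\pi(z)\,K\Xi\mu^{1-|x|-|z|}/\omega(l)$ carries a length-dependent factor $\mu^{2l/3-|x|-|z|}$ that cannot be absorbed into a single $p$ as the $\Dpp$ model requires. The paper's $\pi(\gamma)=\alpha_{\mathrm{start}}\beta_{\mathrm{end}}\exp\{\mathfrak{h}(l/3-|\gamma|)\}$ bakes this compensation into the weight (at the cost of $\pi_{\max}$ growing slowly with $l$ rather than being $O(1)$, which is why the paper tracks $\pi_{\max}\leq\log\log l$ and why Lemma~\ref{lem: eigenvalue of Dpp} is stated with a $\pi_{\max}^{2}$ in its hypothesis). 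You should redefine $\pi$ accordingly; as stated, your reduction to the $\Dpp$ model is not quite valid when $\omega(l)\to\infty$.
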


This allows us to prove Theorem \ref{mainthm: property t in random quotients of hyperbolic groups}.

\begin{proof}[Proof of Theorem \ref{mainthm: property t in random quotients of hyperbolic groups}]
Consider the following model of random quotients of $G$, which we denote by $G_{1}(l,p)$. Let $p_{l}:\mathcal{L}^{G}\rightarrow [0,1]$. Add each word $w\in\mathcal{L}^{G}_{l,\omega (l)}$ to the relator set $\mathfrak{A}_{l}$ with probability $p_{l}(w)$, and consider $G\slash \langle \langle \mathfrak{A}_{l}\rangle \rangle$.

First note that, in the binomial model, there exists a constant $C$ such that $p(w\in \mathcal{R}_{l})\leq C\nu^{ld-\vert w\vert}$, independent of $w$. Therefore, as $(T)$ is preserved under quotients, Theorem \ref{thm: (T) in binomial hyperbolic quotients} implies that $G_{1}(l,p_{l})$ has (T) with probability $1-o_{l}(1)$ for all $p_{l}$ with $\min_{w\in\mathcal{L}^{G}_{l,\omega (l)}}\{p_{l}(w)\}\exp\{2\mathfrak{h}l\slash 3\}=\Omega_{l}(l^{7}).$

Fix $d>1\slash 3$ and choose $d>d_{1}>d_{2}>1\slash 3.$ Recall that Property (T) is preserved under adding relators. We have that $$\vert \mathcal{L}^{G}_{l}\vert =(1+o_{l}(1))\eta ' \exp\{\mathfrak{h}l\}$$ for some $\eta'>0$. We previously defined $$\Phi(g):=\left\vert\{w\in \mathcal{L}^{G}\;:\;\overline{w}=g\}\right \vert,$$ and proved that $\Phi(g)$ is uniformly bounded above by some constant $\Phi$. 

We refer to the model in Theorem \ref{mainthm: property t in random quotients of hyperbolic groups}  (i.e. killing a random subset $\mathfrak{B}_{l}\subseteq Ann_{l,\omega}(G)$ of size $\exp\{\mathfrak{h}ld\}$) by $G(l,d)$.  Finally, for $\rho_{l}: G\rightarrow [0,1]$ we produce the model $G_{2}(l,\rho)$ as follows. Add each element $g\in Ann_{l,\omega(l)}(G)$ to $X_{l} $ with probability $\rho_{l}(g)$, and consider $G\slash \langle \langle X_{l}\rangle \rangle$.

Now, consider $p_{l}$ defined by $$p_{l}(w):=\dfrac{\exp\{\mathfrak{h}(ld_{2}-\vert w\vert )\}}{\Phi(\overline{w})},$$ and the model $G_{1}(l,p)$. We may assume that $p_{l}(w,d_{2})=o_{l}(1)$ for all words $w$. The probability that an element $g$ appears in $\mathfrak{A}_{l}$ is 
\begin{align*}
    1-(1-p_{l}(w,d_{2}))^{\Phi (g)}&\approx (1+o_{l}(1))\Phi(g)p_{l}(w,d_{2})\\
    &=(1+o_{l}(1))\exp\{\mathfrak{h}(ld_{2}-\vert w\vert)\}.
\end{align*}

Therefore the model $G_{1}(l,p)$ is effectively the same as the model $G_{2}(l,\rho)$, where we add each element $g$ of $Ann_{l,\omega(l)}(G)$ to $Y_{l}$ with probability $\rho_{l}:=\exp\{\mathfrak{h}ld_{2}-\mathfrak{h}\vert g\vert \}$. In particular, if $d_{2}>1\slash 3,$ then by Theorem \ref{thm: (T) in binomial hyperbolic quotients}, we have that $G_{2}(l,\rho_{l})$ has property $(T)$ with overwhelming probability.

Now, it is easily seen that for $G\slash \langle \langle \mathfrak{B}_{l}\rangle\rangle\sim G(l,d),$ we have almost surely that for $0\leq n\leq \omega(l)$: $$\vert \mathfrak{B}_{l}\cap S_{l-n}\vert \geq (1+o_{l}(1))\exp\{\mathfrak{h}ld-\mathfrak{h}n\}\geq \exp\{\mathfrak{h}ld_{1}\}\;(*),$$ since $n\leq \omega (l)\leq \log\log l$.

Therefore, let $\Gamma=G\slash \langle \langle X_{l}\rangle \rangle \sim G_{2}(l,p_{l}(d_{2}))$: by the Chernoff bounds we have that $$\vert X_{l}\cap S_{l-n}\vert =(1+o_{l}(1))\exp\{\mathfrak{h}ld_{2}\}$$ for each $0\leq n\leq \omega (l).$

Choose a random subset $X_{l}\subset \mathfrak{B}_{l}$ of size $\exp\{\mathfrak{h}ld\}$: then $G\slash \langle \langle \mathfrak{B}_{l}\rangle\rangle\sim G(l,d)$, conditioned on $G$ having $(*)$. Since $\Gamma$ has property (T) with probability $1-o_{l}(1)$, and $G\sim G(l,d)$ has $(*)$ with probability $1-o_{l}(1),$ the result follows.
\end{proof}
It thus remains to prove Theorem \ref{thm: (T) in binomial hyperbolic quotients}. There is one small issue, in that we do not wish to study directed graphs. Given $G$ and $R_{l}$ as above, we can define the graph $\Psi(G,R_{l})$ by collapsing each pair of directed edges $(w,v^{-1}), (v^{-1},w)$ in $\Upsilon(G,\mathcal{L}^{G},R_{l})$ to a single undirected edge. In particular, the edge set of $\Psi(G,R_{l})$ is obtained by adding the undirected edge $\{w_{1},w_{3}^{-1}\}$ for each $r=(w_{1},w_{2},w_{3})\in R_{l}$. Since $L(\Upsilon(G,\mathcal{L}^{G},R_{l}))=L(\Psi(G,R_{l}))$, we can instead just analyse $\Psi(G,R_{l}).$

Let $\mathcal{L}^{+}_{l\slash 3,\omega}$ be the set of $w\in\mathcal{L}^{G}_{l\slash 3,\omega(l)\slash 3}$ with $w^{-1}\notin \mathcal{L}^{G}$. Define the set $$\mathcal{L}^{-}_{l\slash 3,\omega }:=\{w\in\mathcal{L}^{G}_{l\slash 3,\omega (l)\slash 3}\;:\;w^{-1}\in \mathcal{L}^{+}_{l\slash 3,\omega(l)\slash 3}\}.$$ Finally, let $\mathcal{L}^{+-}_{l\slash 3,\omega(l)\slash 3}$ be the set $$\mathcal{L}^{+-}_{l\slash 3,\omega}:=\{w\in \mathcal{L}^{G}\;:\;w^{-1}\in\mathcal{L}^{G}_{l\slash 3,\omega\slash 2}\}.$$
Note that, if $w=w^{-1}$, then $w$ is the trivial word. We now apply our previous work on geodesics in hyperbolic groups to count the degrees of vertices in our random graphs. Recall the Chernoff bounds: for $X\sim Bin(m,p)$ and $\delta\in [0,1],$ $$\mathbb{P}(\vert X - mp\vert \geq \delta mp)\leq 2\exp(-mp \delta^{2}\slash 3).$$ We now prove the following.
\begin{lemma}
\label{lem: degrees l=0 mod 3}
Let $G=\langle S\;\vert \; T\rangle $ be a hyperbolic group with growth rate $\mathfrak{h}>0.$  Let $R_{l}$ be obtained as in Theorem \ref{thm: (T) in binomial hyperbolic quotients}, where $p\exp\{-\mathfrak{h}l\slash 3\}=\Omega_{l}(l)$. Then with probability tending to $1$ as $l$ tends to infinity, for any $v\in \mathcal{L}^{+}_{l,\omega (l)\slash 3},w\in \mathcal{L}^{+-}_{l,\omega (l)\slash 3}$.
\begin{align*}
rep(v)
   &=(1+o_{l}(1))\Xi\nu(\mathcal{Z}[v]),\\
 rep(w)
   &=(1+o_{l}(1))\Xi(\nu(\mathcal{Z}[w])+\nu(\mathcal{Z}[w^{-1}])).
\end{align*}
\end{lemma}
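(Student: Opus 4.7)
The plan is to write each $rep(v)$ as a sum of independent Bernoulli indicators $X_\gamma$, one per path $\gamma$ in $C^{max}$ of length in $[l-\omega(l),l]$ admitting a split $\gamma = \gamma_x \gamma_v \gamma_z$ with $lab(\gamma_v)=v$ and $(|\gamma_x|,|\gamma_v|,|\gamma_z|)\in Sol_l(|\gamma|)$; this indicator has success probability $\nu(\mathcal{Z}[\gamma,0])\Xi/(\omega(l)\slash 3)^2$. For $v\in\mathcal{L}^{+}_{l\slash 3,\omega}$ only these indicators count; for $w\in\mathcal{L}^{+-}_{l\slash 3,\omega}$ we additionally include indicators coming from paths whose middle label is $w^{-1}$ (the two families being disjoint since $w\neq w^{-1}$). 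Distinct paths can share the same triple-of-labels, but since each individual Bernoulli parameter is $o_l(1)$, this over-count affects $rep(v)$ only by a $(1+o_l(1))$ multiplicative factor, so $rep(v)$ is effectively the sum of these indicators.

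Next I would compute the expectation. Fix a path $\gamma_v$ in $C^{max}$ with $lab(\gamma_v)=v$. Summing the probabilities over completion paths and admissible $(a,c)$, Lemma \ref{lem: measure of sets middle path}, applied at $L\approx l\slash 3$, yields
$$\sum_{a,c}\sum_{\gamma_x,\gamma_z}\nu(\mathcal{Z}[\gamma_x\gamma_v\gamma_z,0]) = (1+o_l(1))(\omega(l)\slash 3)^2\nu(\mathcal{Z}[\gamma_v,0]),$$
where the $o_l(1)$ term absorbs the boundary discrepancy between the product of individual ranges $a,c\in[l\slash 3-\omega(l)\slash 3, l\slash 3+2]$ and the $Sol_l$ constraint $a+b+c\in[l-\omega(l),l]$ (this discrepancy affects only $O(\omega(l))$ of the $\omega(l)^2$ lattice points). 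The prefactor $(\omega(l)\slash 3)^2$ cancels the probability normalisation; after summing over the at most $|V(C^{max})|$ paths $\gamma_v$ with $lab(\gamma_v)=v$, one obtains $\mathbb{E}[rep(v)]=(1+o_l(1))\Xi\nu(\mathcal{Z}[v,0])$. The $\mathcal{L}^{+-}$ case simply adds the analogous $w^{-1}$ contribution.

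For the concentration step I would apply the Chernoff bounds quoted just before the lemma to each $rep(v)$ individually. Since $\nu(\mathcal{Z}[v,0])=\Theta(\exp\{-\mathfrak{h}|v|\})=\Theta(\exp\{-\mathfrak{h}l\slash 3\})$ uniformly in $v$, the hypothesis $p\exp\{-\mathfrak{h}l\slash 3\}=\Omega_l(l)$ forces $\mathbb{E}[rep(v)]\slash l\to\infty$. Choosing $\delta=\delta(l)=o_l(1)$ with $\delta^2\mathbb{E}[rep(v)]\slash 3\geq \mathfrak{h}l\slash 3 + 2\log l$ produces a per-vertex failure probability bounded by $2\exp\{-\mathfrak{h}l\slash 3 - 2\log l\}$. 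Union-bounding over the at most $|\mathcal{L}^{G}_{l\slash 3,\omega(l)\slash 3}|=O(\exp\{\mathfrak{h}l\slash 3\})$ candidate vertices then yields overall failure probability $o_l(1)$, which gives the claim. The main technical point is precisely this last step: the Chernoff exponent must beat the exponential count of vertices, which is why the hypothesis demands $p\exp\{-\mathfrak{h}l\slash 3\}$ to grow strictly faster than $l$, rather than merely be bounded below.
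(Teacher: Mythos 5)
Your proof takes the same route as the paper: compute $\mathbb{E}[rep(\cdot)]$ by summing selection probabilities over paths with middle label $w$ or $w^{-1}$, evaluate that sum via Lemma~\ref{lem: measure of sets middle path}, then conclude by the Chernoff bounds. The paper's own proof is just the three-line expectation calculation followed by ``The result follows by an application of the Chernoff bounds''; your write-up usefully fills in the details it elides — the negligibility of over-counting across distinct paths sharing a label, the $O(\omega(l))$-out-of-$\omega(l)^2$ boundary discrepancy against the $Sol_l$ constraint, and the union-bound bookkeeping that shows why the hypothesis $\Xi\exp\{-\mathfrak{h}l\slash 3\}=\Omega_l(l)$ (written as $p$ in the statement, a typo) must beat the exponential count of vertices — and these details are all correct.
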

\begin{proof}
Let us suppose that $w\in \mathcal{L}^{+-}_{l\slash 3,\omega (l)\slash 3}$. Then we calculate, using Lemma \ref{lem: measure of sets middle path}.

\begin{align*}
    \mathbb{E}(rep(w))&=\sum \limits_{\substack{\sigma\gamma\sigma' \mbox{a path }\\lab(\gamma)=w}} Pr(\sigma\gamma\sigma'\mbox{ added to } R_{\vert \sigma\vert,\vert w\vert,\vert\sigma'\vert})\\&\hspace*{30 pt}+\sum \limits_{\substack{\sigma\gamma\sigma' \mbox{a path }\\lab(\gamma)=w^{-1}}} Pr(\sigma\gamma\sigma'\mbox{ added to } R_{\vert \sigma\vert,\vert w^{-1}\vert,\vert\sigma'\vert})\\
    &=\Xi\nu (\mathcal{Z}[w])+\nu (\mathcal{Z}[w^{-1}]).
\end{align*}
The result follows by an application of the Chernoff bounds.
\end{proof}

We next need to analyse edge probabilities.
\begin{lemma}
\label{lem: edge probs l=0 mod 3}
Let $G=\langle S\;\vert \; T\rangle $ be a hyperbolic group with growth rate $\mathfrak{h}>0.$  Let $l\geq 1$. Let $R_{l}$ be obtained as in Theorem \ref{thm: (T) in binomial hyperbolic quotients}, where $\Xi \exp\{-\mathfrak{h}l\slash 3\}=\Omega_{l}(l)$. Let $E$ be the edge set of $\Psi(G,\mathfrak{R}_{l})$. There exists a constant $\pi_{min}>0$ and a function $\pi:L^{G}\rightarrow [\pi_{min},\infty)$ such that for any $u\in \mathcal{L}^{+}_{l\slash 3,\omega (l)\slash 3},v\in\mathcal{L}^{-}_{l\slash 3,\omega (l)\slash 3},w,w'\in \mathcal{L}^{+-}_{l\slash 3,\omega (l)\slash 3}$:
\begin{align*}
 Pr(\{u,v\}\in E)&=(1+o_{l}(1))\pi(u)\pi(v^{-1})\exp\{-2\mathfrak{h}l\slash3 \}3\Xi\slash \omega (l),\\
 Pr(\{u,w\}\in E)&=(1+o_{l}(1))\pi(u)\pi(w^{-1})\exp\{-2\mathfrak{h}\slash3 \}3\Xi\slash \omega(l),\\
  Pr(\{v,w\}\in E)&=(1+o_{l}(1))\pi(v^{-1})\pi(w)\exp\{-2\mathfrak{h}l\slash 3\}3\Xi\slash \omega(l),\\
   Pr(\{w,w'\}\in E)&=(1+o_{l}(1))[\pi(w)\pi(w'^{-1})+\pi(w^{-1})\pi(w')]\exp\{-2\mathfrak{h}l\slash 3\}3\Xi\slash \omega(l).
\end{align*}

\end{lemma}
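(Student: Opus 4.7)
The plan is to compute, for each type of edge, the expected number of triples $r\in R_l$ producing that edge, and then pass to the edge probability using that individual path probabilities are small. I first define a weight $\pi:\mathcal{L}^G_{l/3,\omega(l)/3}\to [\pi_{\min},\infty)$ by
$$\pi(w):=\sum_{\sigma=(e_{i_1},\ldots,e_{i_{|w|}})\in Path(w)}\alpha_{i_1}\beta_{i_{|w|}},$$
where $\alpha,\beta$ are the left and right Perron-Frobenius eigenvectors of the edge incidence matrix of $C^{max}$. Since $C^{max}$ is irreducible, every entry of $\alpha,\beta$ is strictly positive, and $|Path(w)|\geq 1$ for every $w\in\mathcal{L}^G$, so a uniform lower bound $\pi_{\min}>0$ exists.

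The second step is a case analysis. An edge $\{x,y\}$ is placed in $\Psi(G,R_l)$ once per triple $(x_r,y_r,z_r)\in R_l$ with $\{x_r,z_r^{-1}\}=\{x,y\}$, and since these triples come from decompositions of paths in $C^{max}$, each of $x_r,y_r,z_r$ must lie in $\mathcal{L}^G$. Combining with the definitions of $\mathcal{L}^+,\mathcal{L}^-,\mathcal{L}^{+-}$ eliminates forbidden configurations (for instance, $v\in\mathcal{L}^-$ means $v\notin\mathcal{L}^G$, so no triple begins with $v$), leaving just: $(u,\cdot,v^{-1})$ for $\{u,v\}$; $(u,\cdot,w^{-1})$ for $\{u,w\}$; $(w,\cdot,v^{-1})$ for $\{v,w\}$; and both $(w,\cdot,w'^{-1})$ and $(w',\cdot,w^{-1})$ for $\{w,w'\}$.

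For the typical case $\{u,v\}$ with $u\in\mathcal{L}^+,v\in\mathcal{L}^-$, I enumerate contributing triples by pairs $(\sigma,\sigma'')\in Path(u)\times Path(v^{-1})$ together with connecting middle paths $\gamma'$ in $C^{max}$ satisfying $|\sigma\gamma'\sigma''|\in[l-\omega(l),l]$ and $(|u|,|\gamma'|,|v|)\in Sol_l(|\sigma\gamma'\sigma''|)$. A short check shows the $Sol_l$-constraint pins $|\gamma'|$ to an interval of length $(1+o_l(1))\omega(l)/3$; applying Lemma \ref{lem: measure of sets between two paths} then gives
$$\sum_{\gamma'}\nu(\mathcal{Z}[\sigma\gamma'\sigma''])=(1+o_l(1))(\omega(l)/3)\,\alpha_{i_1(\sigma)}\beta_{i_{|u|}(\sigma)}\alpha_{i_1(\sigma'')}\beta_{i_{|v|}(\sigma'')}\mu^{-(|u|+|v|-1)}.$$
The sum over $(\sigma,\sigma'')$ factorises into $\pi(u)\pi(v^{-1})$, and multiplying by the per-triple factor $\Xi/(\omega(l)/3)^2=9\Xi/\omega(l)^2$ and using $\mu=\exp\{\mathfrak{h}\}$ with $|u|+|v|=2l/3+O(\omega(l))$ yields the expected count $(1+o_l(1))\pi(u)\pi(v^{-1})\exp\{-2\mathfrak{h}l/3\}\cdot 3\Xi/\omega(l)$.

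Finally, passage from expected multiplicity to $Pr(\{u,v\}\in E)$ follows from $p_\gamma=O(\Xi\exp\{-\mathfrak{h}l\}/\omega(l)^2)=o_l(1)$, so $1-\prod_\gamma(1-p_\gamma)=(1+o_l(1))\sum_\gamma p_\gamma$ in the relevant regime. The other three cases are handled identically; only the last gains a second summand coming from the two admissible triple orientations, giving the asymmetric expression $\pi(w)\pi(w'^{-1})+\pi(w^{-1})\pi(w')$. I expect the main bookkeeping hurdle to be the $Sol_l$-constraint, which cuts the middle-length range from $\omega(l)$ down to $\omega(l)/3$ and is responsible for the constant $3$ (rather than $9$) in the leading factor; identifying the right Perron-Frobenius data so that the double sum over $(\sigma,\sigma'')$ factorises cleanly into the product $\pi(\cdot)\pi(\cdot)$ is the other piece that must be handled with care.
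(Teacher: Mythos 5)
Your overall strategy is the right one and closely matches the paper's: decompose by triple orientation, sum over connecting middle paths via Lemma~\ref{lem: measure of sets between two paths}, factorise over the endpoint-path pairs, then pass from the expected multiplicity to the edge probability by a union/inclusion-exclusion estimate. However, there is a genuine gap in the definition of $\pi$, and as a result the final display does not follow.

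You set $\pi(w)=\sum_{\sigma\in Path(w)}\alpha_{i_1(\sigma)}\beta_{i_{|w|}(\sigma)}$, with no dependence on $|w|$ beyond what is encoded in the Perron--Frobenius weights. After summing over middle paths and multiplying by the per-triple factor $\Xi/(\omega(l)/3)^2$, you obtain an expected count proportional to $\pi(u)\pi(v^{-1})\,\mu^{-(|u|+|v|-1)}\cdot 3\Xi/\omega(l)$. To land on the claimed formula you need $\mu^{-(|u|+|v|-1)}=(1+o_l(1))\,\mu^{-2l/3}$, i.e. $\mu^{\,2l/3-|u|-|v|+1}=1+o_l(1)$. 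But $|u|,|v|$ range over $[l/3-\omega(l)/3,\;l/3+2]$, so the exponent $2l/3-|u|-|v|+1$ can be as large as $2\omega(l)/3+1$, and since $\omega(l)\to\infty$ and $\mu=\exp\{\mathfrak h\}>1$, the factor $\mu^{2\omega(l)/3+1}\to\infty$; it is emphatically not $1+o_l(1)$. The phrase ``using $|u|+|v|=2l/3+O(\omega(l))$'' cannot be used to absorb this.

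The fix, which is exactly what the paper does, is to build the length correction into $\pi$ itself: for $\gamma=(e_{i_1},\ldots,e_{i_n})$, set $\pi(\gamma)=\alpha_{i_1}\beta_{i_n}\exp\{\mathfrak h(l/3-n)\}$ and $\pi(w)=\sum_{lab(\gamma)=w}\pi(\gamma)$. Then $\mu^{-(|u|+|v|)}$ is by design equal to $\mu^{-2l/3}$ times the product of the correction factors already living in $\pi(u)\pi(v^{-1})$, and the formula follows. Note this $\pi$ is still bounded below by a constant $\pi_{\min}>0$ (since $l/3-n\geq -2$), and, because $\omega(l)\leq\log\log l$, the correction stays small enough for the later application of Lemma~\ref{lem: eigenvalue of Dpp}; both properties are required downstream, so the length-dependence of $\pi$ is not merely cosmetic. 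The rest of your argument --- the case analysis of admissible orientations, the $\omega(l)/3$ cut imposed by the $Sol_l$-constraint, the factorisation over $(\sigma,\sigma'')$, and the small-probability passage from expected count to edge probability --- is sound.
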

Clearly $u$ is never connected to $v^{-1}$, etc.
\begin{proof}
We prove Statement $4$ only. For a path $\gamma=(e_{i_{1}},\hdots ,e_{i_{n}})$, let $$\pi (\gamma) =\alpha_{i_{1}}\beta_{i_{n}} \exp\{\mathfrak{h}(l\slash 3 -n)\}.$$ Define $$\pi (w)=\sum_{lab(\gamma)=w}\pi (\gamma).$$ The edge $\{w,w'\}$ can occur when either $w uw'^{-1}$ is added to $R_{l;\vert w\vert,\vert u\vert ,\vert w'\vert}$, or $w' uw^{-1}$ is added to $R_{l;\vert w'\vert,\vert u\vert ,\vert w\vert}$. By Lemma \ref{lem: measure of sets between two paths}, the probability such a word appears in the correct $R_{l;a,b,c}$ is $$(1+o_{l}(1))\dfrac{\omega (l)}{3}(\pi (w)\pi (w'^{-1})+\pi (w^{-1})\pi(w')) \Xi\exp\{-2\mathfrak{h}l\slash 3\}(\omega(l)\slash 3)^{-1}.$$
The probability the word is then added to the correct $R_{l;a,b,c}$ to obtain the desired edge is $(\omega(l)\slash 3)^{-2}$, and the result follows.
\end{proof}
Finally, we can easily observe the following.
\begin{lemma}
Let $G=\langle S\;\vert \; T\rangle $ be a hyperbolic group with growth rate $\mathfrak{h}>0.$  Let $l\geq 1$, and let $R_{l}$ be obtained as in Theorem \ref{thm: (T) in binomial hyperbolic quotients}. Then for all $g\in G$ with $g\in V$:
$$deg(g)=(1+o_{l}(1))deg(g^{-1})=(1+o_{l}(1))rep(g).$$
\end{lemma}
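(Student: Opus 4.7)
The plan is to reduce the claim to concentration of three sums of independent Bernoullis with nearly-equal expectation, then apply the Chernoff bound and a union bound. Since $\mathcal{E}$ is bounded-to-one by Lemma \ref{lem: evaluation map is bounded to one}, both $deg(g) = \sum_{\overline{w}=g} deg(w)$ and $rep(g) = \sum_{\overline{w}=g} rep(w)$ are sums over at most $|V(C^{max})|$ words, and the bijection $w \leftrightarrow w^{-1}$ identifies the index sets for $g$ and $g^{-1}$; so it suffices to work vertex-by-vertex.

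For the expectation computation, each relator $r = (x_r, y_r, z_r) \in R_l$ contributes exactly the edge $\{x_r, z_r^{-1}\}$ to $\Psi$. Hence
\begin{align*}
deg(w) = \#\{r \in R_l : x_r = w\} + \#\{r \in R_l : z_r = w^{-1}\},
\end{align*}
and unpacking the binomial model,
\begin{align*}
\mathbb{E}(deg(w)) = \frac{\Xi}{(\omega(l)/3)^2}\bigg(\sum_{\substack{\sigma_1\sigma_2\sigma_3 \\ lab(\sigma_1)=w}} \nu(\mathcal{Z}[\sigma_1\sigma_2\sigma_3,0]) + \sum_{\substack{\sigma_1\sigma_2\sigma_3 \\ lab(\sigma_3)=w^{-1}}} \nu(\mathcal{Z}[\sigma_1\sigma_2\sigma_3,0])\bigg).
\end{align*}
Applying Lemma \ref{lem: measure of starting path} to the first sum (the path labelled $w$ fixed as the prefix, summing over the middle and suffix) and its mirror image with the ending path fixed to the second, both sums collapse and one arrives at
\begin{align*}
\mathbb{E}(deg(w)) = (1+o_l(1))\,\Xi\,\bigl(\nu(\mathcal{Z}[w]) + \nu(\mathcal{Z}[w^{-1}])\bigr).
\end{align*}
This is manifestly symmetric under $w \leftrightarrow w^{-1}$ and matches the expression for $\mathbb{E}(rep(w))$ established in Lemma \ref{lem: degrees l=0 mod 3}; the coincidence follows because the starting-path integral of Lemma \ref{lem: measure of starting path} and the middle-path integral of Lemma \ref{lem: measure of sets middle path} both evaluate to $\omega(l)^2 \nu(\mathcal{Z}[\gamma])$.

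The concentration step is standard: each of $deg(w)$, $deg(w^{-1})$, and $rep(w)$ is a sum of independent Bernoullis, indexed by paths and admissible splittings $(a,b,c) \in Sol_l(|\gamma|)$. The hypothesis $\Xi\exp\{-\mathfrak{h}l/3\} = \Omega_l(l^7)$ forces every expectation to grow at least like $l^7$, so the Chernoff bound yields relative concentration $(1+o_l(1))$ with failure probability $\exp(-\Omega_l(l^7))$. A union bound over the $O(\exp(\mathfrak{h}l))$ vertices of $V$ shows the three quantities agree on every $g \in V$ simultaneously with probability $1 - o_l(1)$. The main care needed is bookkeeping: verifying that the length windows and splittings allowed by Lemma \ref{lem: measure of starting path} are precisely the ones produced by the binomial model, so that the reductions to $\nu(\mathcal{Z}[w])$ and $\nu(\mathcal{Z}[w^{-1}])$ are legitimate and the boundary errors are absorbed into the $(1+o_l(1))$ factor.
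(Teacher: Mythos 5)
The paper gives no proof of this lemma --- it is preceded only by the remark ``Finally, we can easily observe the following'' --- so there is no written argument in the source to compare against. Your proposal supplies the natural argument, and it is correct. The key observation, which you identify clearly, is that the binomial-model expectation of $deg(w)$ (counting relators with $x_{r}=w$ or $z_{r}^{-1}=w$) collapses, via the starting-/ending-path case of Lemma \ref{lem: measure of starting path}, to $(1+o_{l}(1))\,\Xi\bigl(\nu(\mathcal{Z}[w])+\nu(\mathcal{Z}[w^{-1}])\bigr)$, which is manifestly symmetric under $w\leftrightarrow w^{-1}$ and coincides with the middle-segment expectation $\mathbb{E}(rep(w))$ computed in Lemma \ref{lem: degrees l=0 mod 3} (via Lemma \ref{lem: measure of sets middle path}). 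Passing from words to group elements is a sum of at most $\vert V(C^{max})\vert$ nonnegative terms by Lemma \ref{lem: evaluation map is bounded to one}, and inversion is a bijection between $word(g)\cap V$ and $word(g^{-1})\cap V$, so equality of the word-level expectations transfers to the element-level ones; concentration then follows by Chernoff and a union bound.

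Two small remarks to tidy the write-up. First, the union bound runs over words of length about $l/3$ together with their inverses, so over $O(\exp\{\mathfrak{h}l/3\})$ candidate vertices rather than $O(\exp\{\mathfrak{h}l\})$; this only helps. Second, the window bookkeeping you flag at the end is genuine but harmless: Lemma \ref{lem: measure of starting path} as stated sums over a window of width $\omega(L)$, whereas the binomial model restricts $(a,b,c)\in Sol_{l}(L)$ to a window of width roughly $\omega(l)/3$ about $l/3$. The convergence in Lemma \ref{lem: convergence of approximation} holds for any slow-growing window, so the relevant windowed sum gives $(1+o_{l}(1))(\omega(l)/3)^{2}\nu(\mathcal{Z}[\gamma])$, which exactly cancels the normalisation $(\omega(l)/3)^{-2}$ in the model, as you describe. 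To make the Chernoff step fully precise one should note that $\delta$ may be taken to tend to $0$ slowly (say $\delta=1/\log l$), so the per-vertex failure probability is $\exp\{-\Omega_{l}(l^{7}/\log^{2} l)\}$, which beats the $O(\exp\{\mathfrak{h}l/3\})$ union bound with enormous margin; this is the sense in which the $o_{l}(1)$ is uniform over $g$.
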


We now note that, importantly, for small $p$, there are not too many double edges in $\Psi(G,R_{l}).$ A \emph{matching} in a graph is a set, $\mathcal{M}$, of edges in the graph such that no two edges in $\mathcal{M}$ share a common endpoint.
\begin{lemma}\label{lem: double edges form matching} (c.f. \cite{antoniuktriangle})
Let $G=\langle S\;\vert \; T\rangle $ be a hyperbolic group, with growth rate $\mathfrak{h}>0.$ Let $R_{l}$ be constructed as in Theorem \ref{thm: (T) in binomial hyperbolic quotients}. If $\Xi\leq \exp\{\mathfrak{h}ld\}$ for some $d<5\slash 12$, then with probability tending to $1$ as $l$ tends to infinity, the set of double edges in $\Psi(G,R_{l})$ forms a matching, and there are no triple edges in $\Psi(G,R_{l})$ .
\end{lemma}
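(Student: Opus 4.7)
The plan is to apply the first-moment method to two bad events: (a) some vertex of $\Psi(G, R_l)$ is incident to two distinct double edges (violating the matching property), and (b) some pair of vertices is joined by at least three edges (a triple edge). Each triple $r = (x_r, y_r, z_r) \in R_l$ contributes the single undirected edge $\{x_r, z_r^{-1}\}$ in $\Psi(G, R_l)$, and by the construction of the binomial model distinct triples are added independently, which will let me multiply probabilities for events that are localised on disjoint edge slots.

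First I would estimate, for a fixed unordered pair of vertices $\{w, w'\}$, the expected number of triples in $R_l$ that yield this edge. A triple $r = (x_r, y_r, z_r)$ with $\{x_r, z_r^{-1}\} = \{w, w'\}$ must have $(x_r, z_r) \in \{(w, w'^{-1}), (w', w^{-1})\}$, and the middle $y_r$ then ranges over words of the prescribed length such that $x_r y_r z_r \in \mathcal{L}^G$. Summing the independent addition probabilities $\nu(\mathcal{Z}[x_r y_r z_r, 0]) \Xi / (\omega(l)/3)^2$ via Lemma \ref{lem: measure of sets between two paths} gives an expected count of order $O(\exp\{-2\mathfrak{h}l/3\} \Xi / \omega(l))$, consistent with Lemma \ref{lem: edge probs l=0 mod 3}. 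Squaring (respectively cubing) this quantity, and using independence across distinct triples, bounds the expected number of double (respectively triple) edges at $\{w, w'\}$.

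For part (a), a matching violation demands some vertex $w$ incident to two distinct double edges, say to $w'$ and $w''$ with $w' \neq w''$. The four triples involved occupy two disjoint edge slots, so the two double-edge events are independent. Summing over the $O(\exp\{\mathfrak{h}l\})$ configurations $(w, w', w'')$ bounds the expected bad count by
\begin{equation*}
O\!\bigl(\exp\{\mathfrak{h}l\} \cdot (\exp\{-4\mathfrak{h}l/3\} \Xi^{2} / \omega(l)^{2})^{2}\bigr) = O\!\bigl(\exp\{-5\mathfrak{h}l/3\} \Xi^{4} / \omega(l)^{4}\bigr).
\end{equation*}
With $\Xi \leq \exp\{\mathfrak{h}ld\}$ and $d < 5/12$ this is $o_l(1)$, so Markov's inequality disposes of (a). For (b), summing the cubed single-triple count over the $O(\exp\{2\mathfrak{h}l/3\})$ unordered pairs $\{w, w'\}$ gives expected triple-edge count of order $\exp\{-4\mathfrak{h}l/3\} \Xi^{3} / \omega(l)^{3}$, which is $o_l(1)$ whenever $d < 4/9$; since $5/12 < 4/9$, the hypothesis more than suffices.

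The main obstacle I anticipate is not conceptual but organisational: handling both orientations $(x_r, z_r) = (w, w'^{-1})$ and $(w', w^{-1})$ of a triple contributing a given edge, verifying that the estimates above are uniform in $w, w'$ (via the bounded Parry-measure ratios $\alpha_i, \beta_j$), and checking independence of disjoint triple-addition events in the binomial model. None of these require new ideas beyond Lemmas \ref{lem: measure of sets between two paths} and \ref{lem: edge probs l=0 mod 3} together with the Chernoff bounds. The density threshold $d < 5/12$ emerges sharply from the matching calculation in part (a) and is the tighter of the two constraints, paralleling analogous thresholds in the triangular random group model considered in \cite{antoniuktriangle}.
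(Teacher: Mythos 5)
Your proposal is correct and follows essentially the same first-moment/union-bound strategy as the paper: bound the expected number of triple edges (giving the looser threshold $d<4/9$) and the expected number of vertices incident to two double edges (giving the binding threshold $d<5/12$), then apply Markov's inequality. Your exponent bookkeeping — $\Xi^{3}\exp\{-4\mathfrak{h}l/3\}$ for triple edges and $\Xi^{4}\exp\{-5\mathfrak{h}l/3\}$ for matching violations — matches the paper's computation (modulo a small typo in the paper's first display, where $\exp\{-2\mathfrak{h}l\}$ should read $\exp\{-3\mathfrak{h}l\}$ to be consistent with the stated exponent $(5+9d-9)$).
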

The proof of the above is effectively that of \cite[Lemma 6.3]{ashcroft2021property}, which closely follows the \cite[p.12]{antoniuktriangle}. 
\begin{proof}
Fix $\Xi\leq \exp\{\mathfrak{h}ld\}$ for some $d<5\slash 12$.
The probability, $\mathbb{P}_{3}$, that there exists a pair of vertices $u,v$ with at least three edges between $u$ and $v$ is bounded above by 
\begin{align*}
        \mathbb{P}_{3}&\leq O_{l}\left((\exp\{\mathfrak{h}l\slash 3\})^{2}(\exp\{\mathfrak{h}l\slash 3\})^{3}\Xi^{3}\exp\{-2\mathfrak{h}l\}\log l\right)\\
        &\leq O_{l}\left(\exp\{\mathfrak{h}l\slash 3\}\right)^{(5+9d-9)\mathfrak{h}l}2\log l)\\
        &=o_{l}(1),
    \end{align*}
since $d<5\slash 12<4\slash 9$. The probability, $\mathbb{P}_{doub}$ that there are vertices $u,v,w$ with double edges between $u$ and $v$ and $u$ and $w$ is bounded by
\begin{align*}
        \mathbb{P}_{doub}&=     O_{l}(\left (\exp\{\mathfrak{h}l\slash 3\})^{3}(\eta\exp\{\mathfrak{h}l\slash 3\})^{4}\Xi^{4}\exp\{-4\mathfrak{h}l\}\log l\right)\\
        &\leq O_{l}(\left(\exp\{\mathfrak{h}l\slash 3\}\right)^{(12d-5)}\log l)\\
            &=o_{l}(1),
    \end{align*}
    since $d<5\slash 12.$
\end{proof}
This allows us to prove Theorem \ref{thm: (T) in binomial hyperbolic quotients}.
\begin{proof}[Proof of Theorem \ref{thm: (T) in binomial hyperbolic quotients}]
As (T) is an increasing property, we may assume that $\Xi$ satisfies the conditions of Theorem \ref{thm: (T) in binomial hyperbolic quotients} and Lemma \ref{lem: double edges form matching}. For example, we may take $\Xi=\exp\{\mathfrak{h}ld\}$ for $1\slash 3 <d<5\slash 12$. Note that the minimum degree of $\Psi(G,\mathfrak{R}_{l})$ tends to infinity with probability tending to $1$. Therefore, as the set of double edges forms a matching in $\Psi(G,\mathfrak{R}_{l})$, by collapsing double edges in $\Psi(G,\mathfrak{R}_{l})$, we find a graph $\Psi'_{l}$ with at most one edge between any two vertices such that $$\lambda_{1}(\Psi'_{l})=\lambda_{1}(\Psi(G,\mathfrak{R}_{l}))+o_{l}(1)$$
almost surely. Applying Lemma \ref{lem: edge probs l=0 mod 3}, we observe that $$\Psi'\sim\mathring{\Delta}\left(\vert \mathcal{L}^{+-}_{l\slash 3,\omega (l)\slash 3}\vert,\vert \mathcal{L}^{+}_{l\slash 3,\omega (l)\slash 3}\vert,\pi ,\Xi\exp\{-2\mathfrak{h}l\slash 3\}\slash \omega(l),*\right),$$
where $*:V_{1}\rightarrow V_{1}$ maps $w\mapsto w^{-1}$ (and hence is fixed point free). Now, $\vert \mathcal{L}^{G}_{l,\omega(l)\slash 3}\vert = \vert \mathcal{L}^{+-}_{l\slash 3,\omega(l)\slash 3}\vert+2\vert \mathcal{L}^{+}_{l\slash 3,\omega(l)\slash 3}\vert$, $\log^{6}(\vert \mathcal{L}^{G}_{l\slash 3,\omega(l)\slash 3}\vert)=O_{l}(l^{6})$. Furthermore, $\pi_{max}\leq \log\log l$. As $\Xi$ satisfies the assumptions of Theorem \ref{thm: (T) in binomial hyperbolic quotients}, we may apply Lemma \ref{lem: eigenvalue of Dpp}, to deduce that with probability $1-o_{l}(1)$; $$\lambda_{1}(\Psi')\geq 1+o_{l}(1).$$ Since $$\lambda_{1}(\Psi(G,\mathfrak{R}_{l}))=\lambda_{1}(\Psi'(G,\mathfrak{R}_{l}))+o_{l}(1)$$ almost surely, we have $$\lambda_{1}(\Psi(G,\mathfrak{R}_{l}))\geq 1+o_{l}(1)$$ almost surely. Furthermore, with probability $1-o_{l}(1)$, $\mathcal{L}^{G}_{l\slash 3}\subseteq \mathcal{W}(R_{l})$, and so $\mathcal{W}(R_{l})$ generates a finite index subgroup of $G$ by Lemma \ref{lem: Cmax gens finite index}. The result now follows by Corollary \ref{cor: directed new spectral criterion} and Lemma \ref{lem: degrees l=0 mod 3}.
\end{proof}

We now very briefly indicate how to alter the preceding proofs to obtain a proof of Theorem \ref{mainthm: property t in random quotients of hyperbolic groups aperiodic case}. Firstly, we can replace Lemma \ref{lem: convergence of approximation} by the following classical result in ergodic Markov chains.
\begin{lemma}\label{lem: convergence of approximation aperiodic}
Let $B$ be the edge incidence matrix of $C^{max}$, with left (right) $\mu$ eigenvector $\alpha$ ($\beta)$. If $C^{max}$ is aperiodic, we have convergence $$\dfrac{B^{m}}{\mu^{m}}\rightarrow \beta\alpha.$$
\end{lemma}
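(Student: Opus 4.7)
The plan is to invoke the Perron--Frobenius theorem for irreducible aperiodic non-negative matrices and then read off the convergence from a spectral/Jordan decomposition of $B$. Since $C^{max}$ is connected, the incidence matrix $B$ is irreducible; together with the hypothesis of aperiodicity this means that $B$ satisfies the hypotheses of the strong form of Perron--Frobenius. Consequently $\mu$ is a simple eigenvalue of $B$, and \emph{every} other eigenvalue $\lambda$ of $B$ satisfies the strict inequality $|\lambda|<\mu$ (this strict inequality is exactly what aperiodicity buys us over the merely irreducible case, where one has only $|\lambda|\leq \mu$ with equality possible at roots of unity times $\mu$).

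Next I would normalise the left and right Perron eigenvectors $\alpha,\beta$ so that $\alpha\beta=1$, and form the rank-one matrix $P=\beta\alpha$. One checks directly that $P^{2}=\beta(\alpha\beta)\alpha=P$, $BP=\mu\beta\alpha=\mu P$, and $PB=\mu\beta\alpha=\mu P$, so $P$ is a projection commuting with $B$ whose image is the Perron eigenspace. Setting $R:=B-\mu P$, the identities $PR=RP=0$ follow, and the spectrum of $R$ on the kernel of $P$ is exactly the spectrum of $B$ with the simple eigenvalue $\mu$ removed. Therefore the spectral radius $\rho(R)$ is strictly less than $\mu$ by the previous paragraph.

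Because $P$ and $R$ commute and $PR=RP=0$, induction gives $B^{m}=\mu^{m}P+R^{m}$ for every $m\geq 1$, so that
$$\frac{B^{m}}{\mu^{m}}=P+\left(\frac{R}{\mu}\right)^{m}.$$
Since $\rho(R)/\mu<1$, Gelfand's formula $\|R^{m}\|^{1/m}\to\rho(R)$ yields $\|(R/\mu)^{m}\|\to 0$, and the desired convergence $B^{m}/\mu^{m}\to P=\beta\alpha$ follows.

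The main obstacle, such as it is, is the strict spectral-gap statement $|\lambda|<\mu$ for all non-Perron eigenvalues; this is precisely where aperiodicity of $C^{max}$ (as opposed to mere connectedness, which was enough for the Cesàro-type statement in Lemma~\ref{lem: convergence of approximation}) enters in an essential way. Once that is in hand, the decomposition $B=\mu P+R$ with $PR=RP=0$ is automatic from simplicity of $\mu$, and the remainder is routine spectral radius bookkeeping.
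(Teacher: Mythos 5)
The paper does not actually prove this lemma: it is introduced with the phrase ``the following classical result in ergodic Markov chains'' and left as a citation to standard Perron--Frobenius theory. Your proposal is a correct and essentially the canonical proof of that classical fact (it is what one finds in, e.g., Seneta's or Horn--Johnson's treatment of primitive matrices). The argument is sound: connectedness of $C^{max}$ gives irreducibility, aperiodicity upgrades $|\lambda|\leq\mu$ to $|\lambda|<\mu$ off the Perron eigenvalue, the normalisation $\alpha\beta=1$ (already imposed earlier in the paper, in the Parry-measure definition) makes $P=\beta\alpha$ a projection, the identities $PR=RP=0$ give $B^{m}=\mu^{m}P+R^{m}$ by a clean induction, and Gelfand's formula kills $(R/\mu)^{m}$. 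One small point worth stating explicitly if you write this up: $R$ itself has $0$ as an eigenvalue (since $R\beta=0$), so $\rho(R)$ equals the second-largest modulus among eigenvalues of $B$ unless $B$ has only the Perron eigenvalue, and in either case $\rho(R)<\mu$, so the conclusion holds. Compared to the paper's aperiodic-free Lemma~\ref{lem: convergence of approximation}, which proves only Ces\`aro-type convergence of window averages of $B^{m}/\mu^{m}$ via telescoping estimates, your argument shows genuine termwise convergence, which is exactly why the aperiodicity hypothesis is needed and why the paper can drop the $\omega$-averaging in that case.
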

We then switch to a new binomial model as follows.  

\begin{definition}

The \emph{binomial model of random groups at length $3l$} is obtained as follows. Let $\Xi=\Xi(l)\leq \mu^{l-\omega(l)}.$ For each path $\gamma$ in $C^{max}$ of length $3l$, add $lab(\gamma)$ to $R_{3l;l,l,l}$ with probability $$\nu (\mathcal{Z}[\gamma,0])\Xi,$$ 

and consider $$G\slash \langle \langle \mathcal{R}_{l}\rangle\rangle.$$
\end{definition}

The aim is to prove the following.
\begin{theorem}\label{thm: binomial (T) aperiodic case}
Let $G=\langle S\;\vert \; T\rangle $ be a finite presentation of a hyperbolic group with growth rate $\mathfrak{h}>0,$ and $C^{max}$ aperiodic, Let $\mathcal{R}_{3l}$ be obtained as above. If $\Xi\exp\{-\mathfrak{h}l\}=\Omega_{l}(l^{7}),$ then $G\slash \langle\langle \mathcal{R}_{3l}\rangle\rangle$ has Property (T) with probability tending to $1$ as $l$ tends to infinity.
\end{theorem}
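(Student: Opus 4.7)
The plan is to mirror the proof of Theorem \ref{thm: (T) in binomial hyperbolic quotients}, replacing each counting lemma by a cleaner fixed-length version made available by Lemma \ref{lem: convergence of approximation aperiodic}. Since $C^{max}$ is aperiodic, $B^m/\mu^m$ converges directly to $\beta\alpha$, so there is no need to average over a window of $\omega(l)$ consecutive lengths. I would therefore work directly with paths of length exactly $3l$ in $C^{max}$, split as $l+l+l$, and run the spectral argument of Corollary \ref{cor: directed new spectral criterion} on the resulting graph $\Upsilon(G,\mathcal{L}^G,R_{3l})$.

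First I would prove direct analogues of Lemmas \ref{lem: measure of sets between two paths}--\ref{lem: measure of sets middle path}. For paths $\gamma=(e_{i_{1}},\ldots,e_{i_{m}})$ and $\gamma'=(e_{j_{1}},\ldots,e_{j_{n}})$ in $C^{max}$, one has
$$\sum_{\substack{\sigma=(e_{k_{1}},\ldots,e_{k_{l}})\\\gamma\sigma\gamma'\text{ a path}}}\nu(\mathcal{Z}[\gamma,\sigma,\gamma'],0)=(1+o_{l}(1))\alpha_{i_{1}}\beta_{i_{m}}\alpha_{j_{1}}\beta_{j_{n}}/\mu^{m+n-1},$$
and analogous identities where the outer pieces are summed instead; each follows immediately from $B^{l}_{i_{m},j_{1}}/\mu^{l}\to\beta_{i_{m}}\alpha_{j_{1}}$, with no $\omega(L)$ factor. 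Using these, I would derive the aperiodic analogues of Lemmas \ref{lem: degrees l=0 mod 3} and \ref{lem: edge probs l=0 mod 3}: there exist $\pi_{\min}>0$ and a weight function $\pi$ such that $\mathbb{E}(rep(v))=(1+o_{l}(1))\Xi\,\nu(\mathcal{Z}[v])$ for $v\in\mathcal{L}^{+}_{l}\cup\mathcal{L}^{+-}_{l}$, and the edge probabilities in $\Psi(G,R_{3l})$ are of the form $(1+o_{l}(1))\pi(u)\pi(v^{-1})\exp\{-2\mathfrak{h}l\}\Xi$ (with the obvious symmetrization for pairs in $\mathcal{L}^{+-}_{l}\times\mathcal{L}^{+-}_{l}$). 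Concentration is supplied by the Chernoff bounds exactly as in the periodic case.

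Next, the union bound in Lemma \ref{lem: double edges form matching} applies verbatim: provided $\Xi\leq\exp\{3\mathfrak{h}ld\}$ for some $d<5/12$, which we may assume since Property (T) is preserved under further quotients, the double edges of $\Psi(G,R_{3l})$ almost surely form a matching and there are no triple edges. Collapsing double edges produces a graph $\Psi'$ distributed as
$$\mathring{\Delta}\bigl(|\mathcal{L}^{+-}_{l}|,\,|\mathcal{L}^{+}_{l}|,\,\pi,\,\Xi\exp\{-2\mathfrak{h}l\},\,*\bigr),$$
where $*$ sends $w\mapsto w^{-1}$. Since $|\mathcal{L}^{G}_{l}|=\Theta(\exp\{\mathfrak{h}l\})$, $\pi_{\max}=O(1)$, and $\Xi\exp\{-\mathfrak{h}l\}=\Omega_{l}(l^{7})$, the hypothesis $(m_{1}+2m_{2})p=\Omega_{l}(\pi_{\max}^{2}\log^{6}(m_{1}+2m_{2}))$ of Lemma \ref{lem: eigenvalue of Dpp} is verified, so $\lambda_{1}(\Psi')\geq 1-o_{l}(1)$ and hence $\lambda_{1}(\Upsilon(G,\mathcal{L}^{G},R_{3l}))\geq 1-o_{l}(1)$ almost surely. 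Combining this with Lemma \ref{lem: Cmax gens finite index} (so that $\mathcal{W}(R_{3l})$ generates a finite-index subgroup) and the aperiodic degree lemma, Corollary \ref{cor: directed new spectral criterion} yields Property (T) for $G/\langle\langle\mathcal{R}_{3l}\rangle\rangle$ asymptotically almost surely.

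The main obstacle is the bookkeeping for the fixed-length counting lemmas — in particular verifying uniformity of the $o_{l}(1)$ error terms over all pairs of endpoints. This should be manageable because Lemma \ref{lem: convergence of approximation aperiodic} provides convergence in matrix norm, so the error in $B^{l}_{i,j}/\mu^{l}=\beta_{i}\alpha_{j}+o_{l}(1)$ is uniform over $i,j$. A minor subtlety is that Corollary \ref{cor: directed new spectral criterion} is formally stated for slow-growing $\omega$, whereas here $\omega\equiv 2$; one checks that the proof of the corollary depends only on the $o_{l}(1)$ estimates supplied by the measure and degree lemmas, which are now cleaner rather than weaker, so this causes no difficulty.
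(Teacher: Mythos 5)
Your proposal is correct and follows essentially the same route the paper takes: the paper explicitly says that the proof of Theorem \ref{thm: binomial (T) aperiodic case} "follows immediately as the proof of Theorem \ref{thm: (T) in binomial hyperbolic quotients}" once the windowed counting lemmas are replaced by their fixed-length aperiodic analogues (the paper's Lemma \ref{lem: measure of starting path aperiodic}, which is precisely the set of identities you derive from $B^{l}/\mu^{l}\to\beta\alpha$). Your filled-in details — the analogues of Lemmas \ref{lem: degrees l=0 mod 3} and \ref{lem: edge probs l=0 mod 3} with no $\omega$ factor, the WLOG truncation $\Xi\leq\exp\{3\mathfrak{h}ld\}$ with $d<5/12$ for the matching argument, the cleaner bound $\pi_{\max}=O(1)$, the verification of the hypothesis of Lemma \ref{lem: eigenvalue of Dpp}, and the observation that the piece lengths $l=\,(3l)/3$ fall within the window required by Corollary \ref{cor: directed new spectral criterion} — are exactly what the paper leaves implicit, and they check out.
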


The proof of the above follows immediately as the proof of Theorem \ref{thm: (T) in binomial hyperbolic quotients}, using the following lemma, which is obtained by an application of Lemma \ref{lem: convergence of approximation aperiodic}.
\begin{lemma}\label{lem: measure of starting path aperiodic}
Let $\gamma=(e_{i_{1}},\hdots ,e_{i_{l}}),\gamma'=(e_{i'_{1}},\hdots ,e_{i'_{l}}),$ be paths in $C^{max}$. The following hold.
\begin{align*}
    &\sum \limits_{\sigma=(e_{k_{1}},\hdots e_{k_{l}})}\nu (\mathcal{Z}[\gamma,\sigma,\gamma'],0)=(1+o_{L}(1)) \alpha_{i_{1}}\beta_{i_{m}}\alpha_{j_{1}}\beta_{j_{n}}\slash \mu^{m+n-1}.\\
 &\sum \limits_{\sigma=(e_{j_{1}},\hdots ,e_{j_{l}})}\sum \limits_{\sigma'=(e_{k_{1}},\hdots e_{k_{l}})}\nu (\mathcal{Z}[\gamma,\sigma,\sigma'],0)=\nu (\mathcal{Z}[\gamma,0])\\
 & \sum \limits_{\sigma=(e_{j_{1}},\hdots ,e_{j_{l}})}\sum \limits_{\sigma'=(e_{k_{1}},\hdots e_{k_{l}})}\nu (\mathcal{Z}[\sigma,\sigma',\gamma],0)=\nu (\mathcal{Z}[\gamma,0])\\
 &  \sum \limits_{\sigma=(e_{j_{1}},\hdots ,e_{j_{l}})}\sum \limits_{\sigma'=(e_{k_{1}},\hdots e_{k_{l}})}\nu (\mathcal{Z}[\sigma,\gamma,\sigma'],0)=\nu(\mathcal{Z}[\gamma,0]).
\end{align*}

\end{lemma}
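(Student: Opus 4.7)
The plan is to mirror the proofs of Lemmas \ref{lem: measure of sets between two paths}, \ref{lem: measure of starting path}, and \ref{lem: measure of sets middle path}, replacing the averaged convergence extracted from Lemma \ref{lem: convergence of approximation} by the pointwise convergence $B^{l}/\mu^{l}\rightarrow \beta\alpha$ supplied by Lemma \ref{lem: convergence of approximation aperiodic}. The averaging window of length $\omega(L)$ that was forced upon us in the general case is unnecessary here: aperiodicity of $C^{max}$ already yields convergence of $B^{l}/\mu^{l}$ on the nose to the rank-one projection $\beta\alpha$, so every identity in the lemma corresponds to fixing lengths exactly rather than summing $L-\omega(L)\le s\le L$.

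For the first equation, I would compute the sum directly from the definition of the Parry measure. Writing $\gamma$ with terminal edge $e_{i_m}$ and $\gamma'$ with initial edge $e_{j_1}$, any valid concatenation $\gamma\sigma\gamma'$ has Parry mass $\alpha_{i_1}\beta_{j_n}/\mu^{m+l+n-1}$, and the number of middle paths $\sigma$ of length $l$ that can be inserted is precisely $B^{l}_{i_m,j_1}$. Factoring, the sum equals
\[
\frac{\alpha_{i_{1}}\beta_{j_{n}}}{\mu^{m+n-1}}\cdot\frac{B^{l}_{i_{m},j_{1}}}{\mu^{l}},
\]
and Lemma \ref{lem: convergence of approximation aperiodic} gives $B^{l}_{i_{m},j_{1}}/\mu^{l}=\beta_{i_{m}}\alpha_{j_{1}}+o_{l}(1)$, which yields the claimed asymptotic. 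This is the one and only place where aperiodicity of $C^{max}$ is actually used.

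For the remaining three equalities, the argument is purely measure-theoretic and produces exact (not asymptotic) equalities. The families $\{\mathcal{Z}[\gamma\sigma\sigma',0]\}_{\sigma,\sigma'}$, $\{\mathcal{Z}[\sigma\sigma'\gamma,0]\}_{\sigma,\sigma'}$, and $\{\mathcal{Z}[\sigma\gamma\sigma',0]\}_{\sigma,\sigma'}$ are pairwise disjoint and, after using shift-invariance of $\nu$ to translate so that $\gamma$ occupies positions $0,1,\dots,m-1$, they exhaust the cylinder $\mathcal{Z}[\gamma,0]$. Hence $\sigma$-additivity of $\nu$ collapses each double sum to $\nu(\mathcal{Z}[\gamma,0])$. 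Concretely, for the second equation one extends $\gamma$ forward; for the third one applies shift-invariance $\nu(\mathcal{Z}[\sigma\sigma'\gamma,0])=\nu(\mathcal{Z}[\sigma\sigma'\gamma,-2l])$ and then extends backward; for the fourth one iterates the forward extension argument once in each direction.

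The main (and essentially only) obstacle is the first equation, but the obstacle is exactly the statement of Lemma \ref{lem: convergence of approximation aperiodic}, a standard Perron--Frobenius fact for aperiodic irreducible non-negative matrices. The remaining three equalities are tautological consequences of the measure-theoretic structure of the Parry measure and require no new input beyond shift-invariance.
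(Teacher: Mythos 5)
Your proof is correct and takes essentially the same approach the paper intends (the paper gives only a one-line justification, saying the lemma "is obtained by an application of Lemma~\ref{lem: convergence of approximation aperiodic}"). You correctly isolate the only place where aperiodicity is used: the first identity, via $B^{l}/\mu^{l}\rightarrow\beta\alpha$ replacing the Ces\`aro-style averaging of Lemma~\ref{lem: convergence of approximation}, exactly as in the proof of Lemma~\ref{lem: measure of sets between two paths}. Your treatment of the remaining three identities as exact equalities following from shift-invariance and countable additivity of $\nu$ — each family of cylinders is a disjoint partition of $\mathcal{Z}[\gamma,0]$ after a suitable shift — is precisely the (unstated) argument behind Lemmas~\ref{lem: measure of starting path} and~\ref{lem: measure of sets middle path}, just without the $\omega(L)^{2}$ factor that arises there from summing over a window of lengths. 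Your observation that those three identities need no aperiodicity hypothesis at all is a small but genuine clarification beyond what the paper records.
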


We then prove Theorem \ref{mainthm: property t in random quotients of hyperbolic groups aperiodic case} from Theorem \ref{thm: binomial (T) aperiodic case} in a manner similar to that employed to deduce Theorem \ref{mainthm: property t in random quotients of hyperbolic groups} from Theorem \ref{thm: (T) in binomial hyperbolic quotients}. The main observation is that for a choice $T_{l}\subseteq Ann_{l,2}(G)$ of size at least $\exp\{\mathfrak{h}ld\}$ for any $\epsilon>0$, we have $\vert T_{l}\cap S_{3\lfloor l\slash 3\rfloor} \vert \geq \exp\{\mathfrak{h}l(d-\epsilon)\}$ almost surely. Theorem \ref{thm: binomial (T) aperiodic case} then implies that the group $G\slash \langle \langle T_{l}\cap S_{3\lfloor l\slash 3\rfloor} \rangle \rangle$ has (T) almost surely.

	\bibliographystyle{alpha}
	\bibliography{bib}

\begin{thebibliography}{BdlHV08}

\bibitem[Ago13]{Agol13}
Ian Agol.
\newblock The virtual {H}aken conjecture.
\newblock {\em Doc. Math.}, 18:1045--1087, 2013.
\newblock With an appendix by Agol, Daniel Groves, and Jason Manning.

\bibitem[A{\L}S15]{antoniuktriangle}
Sylwia Antoniuk, Tomasz {\L}uczak, and Jacek \'{S}wi\c{a}tkowski.
\newblock Random triangular groups at density 1/3.
\newblock {\em Compos. Math.}, 151(1):167--178, 2015.

\bibitem[ARD20]{ashcroftrandom}
Calum~J. Ashcroft and Colva~M. Roney-Dougal.
\newblock On random presentations with fixed relator length.
\newblock {\em Comm. Algebra}, 48(5):1904--1918, 2020.

\bibitem[Ash21]{ashcroft2021property}
Calum~J Ashcroft.
\newblock Property {(T)} in density-type models of random groups.
\newblock {\em arXiv preprint arXiv:2104.14986}, 2021.

\bibitem[BdlHV08]{bekkadelaharpe}
Bachir Bekka, Pierre de~la Harpe, and Alain Valette.
\newblock {\em Kazhdan's {P}roperty ({T})}, volume~11 of {\em New Mathematical
  Monographs}.
\newblock Cambridge University Press, Cambridge, 2008.

\bibitem[Bha97]{BhatiaRajendra1997Ma/R}
Rajendra Bhatia.
\newblock {\em Matrix analysis / Rajendra Bhatia.}
\newblock Graduate texts in mathematics ; 169. Springer, New York ; London,
  1997.

\bibitem[Cal13]{calegari2013ergodic}
Danny Calegari.
\newblock The ergodic theory of hyperbolic groups.
\newblock {\em Contemp. Math}, 597:15--52, 2013.

\bibitem[Can84]{Cannon_combinatorial_structure_hyperbolic_groups}
James~W Cannon.
\newblock The combinatorial structure of cocompact discrete hyperbolic groups.
\newblock {\em Geometriae Dedicata}, 16(2):123--148, 1984.

\bibitem[CLV04]{chungrandomgraph}
Fan Chung, Linyuan Lu, and Van Vu.
\newblock The spectra of random graphs with given expected degrees.
\newblock {\em Internet Math.}, 1(3):257--275, 2004.

\bibitem[DM19]{DRUTU_Mackay}
Cornelia Druţu and John~M. Mackay.
\newblock Random groups, random graphs and eigenvalues of p-laplacians.
\newblock {\em Advances in Mathematics}, 341:188--254, 2019.

\bibitem[Duo17]{duong}
Yen Duong.
\newblock {\em On {R}andom {G}roups: {T}he {S}quare {M}odel at {D}ensity d <
  1/3 and as {Q}uotients of {F}ree {N}ilpotent {G}roups}.
\newblock ProQuest LLC, Ann Arbor, MI, 2017.
\newblock Thesis (Ph.D.)--University of Illinois at Chicago.

\bibitem[FK15]{frieze_karonski}
Alan Frieze and Michal Karonski.
\newblock {\em Introduction to Random Graphs}.
\newblock Cambridge University Press, 2015.

\bibitem[FW21]{futer2021cubulating}
David Futer and Daniel~T Wise.
\newblock Cubulating random quotients of hyperbolic cubulated groups.
\newblock {\em arXiv preprint arXiv:2106.04497}, 2021.

\bibitem[Gro87]{Gromov_hyperbolic}
Mikhael Gromov.
\newblock Word hyperbolic groups.
\newblock In {\em S. M. Gersten, editor, Essays in Group Theory}, volume~8 of
  {\em Mathematical Sciences Research Institute Publications}, pages 75--264.
  Springer-Verlag, 1987.

\bibitem[Gro93]{gromovasymptotic}
M.~Gromov.
\newblock Asymptotic invariants of infinite groups.
\newblock In {\em Geometric group theory, {V}ol. 2 ({S}ussex, 1991)}, volume
  182 of {\em London Math. Soc. Lecture Note Ser.}, pages 1--295. Cambridge
  Univ. Press, Cambridge, 1993.

\bibitem[GTT18]{gekhtman2018counting}
Ilya Gekhtman, Samuel~J Taylor, and Giulio Tiozzo.
\newblock Counting loxodromics for hyperbolic actions.
\newblock {\em Journal of Topology}, 11(2):379--419, 2018.

\bibitem[GTT20]{gekhtman2020central}
Ilya Gekhtman, Samuel~J Taylor, and Giulio Tiozzo.
\newblock Central limit theorems for counting measures in coarse negative
  curvature.
\newblock {\em arXiv preprint arXiv:2004.13084}, 2020.

\bibitem[HJ94]{hornjohnson}
Roger~A. Horn and Charles~R. Johnson.
\newblock {\em Topics in matrix analysis}.
\newblock Cambridge University Press, Cambridge, 1994.
\newblock Corrected reprint of the 1991 original.

\bibitem[KK13]{kotowski}
Marcin Kotowski and Micha\l\; Kotowski.
\newblock Random groups and {P}roperty {$(T)$}: \.{Z}uk's theorem revisited.
\newblock {\em J. Lond. Math. Soc. (2)}, 88(2):396--416, 2013.

\bibitem[Mon21]{montee2021random}
MurphyKate Montee.
\newblock Random groups at density $d<3/14$ act non-trivially on a {CAT}(0)
  cube complex.
\newblock {\em arXiv preprint arXiv:2106.14931}, 2021.

\bibitem[Mon22]{Montee_prop_t}
MurphyKate Montee.
\newblock Property {(T)} in k-gonal random groups.
\newblock {\em Glasgow Mathematical Journal}, page 1–5, 2022.

\bibitem[MP15]{mackay_przytycki2015balanced}
John~M Mackay and Piotr Przytycki.
\newblock Balanced walls for random groups.
\newblock {\em The Michigan Mathematical Journal}, 64:397--419, 2015.

\bibitem[Odr16]{odrsquaremodel}
Tomasz Odrzyg\'{o}\'{z}d\'{z}.
\newblock The square model for random groups.
\newblock {\em Colloq. Math.}, 142(2):227--254, 2016.

\bibitem[Odr18]{odrcubulatingsquare}
Tomasz Odrzyg\'{o}\'{z}d\'{z}.
\newblock Cubulating random groups in the square model.
\newblock {\em Israel J. Math.}, 227(2):623--661, 2018.

\bibitem[Odr19]{odrzygozdz2019bent}
Tomasz Odrzyg{\'o}{\'z}d{\'z}.
\newblock Bent walls for random groups in the square and hexagonal model.
\newblock {\em arXiv preprint arXiv:1906.05417}, 2019.

\bibitem[Oli09]{oliveira2009concentration}
Roberto~Imbuzeiro Oliveira.
\newblock Concentration of the adjacency matrix and of the laplacian in random
  graphs with independent edges.
\newblock {\em arXiv preprint arXiv:0911.0600}, 2009.

\bibitem[Oll04]{olliviersharp}
Y.~Ollivier.
\newblock Sharp phase transition theorems for hyperbolicity of random groups.
\newblock {\em Geom. Funct. Anal.}, 14(3):595--679, 2004.

\bibitem[Oll05]{Ollivier_Jan_Invitation}
Yann Ollivier.
\newblock {\em A {J}anuary 2005 invitation to random groups}, volume~10 of {\em
  Ensaios Matem\'{a}ticos [Mathematical Surveys]}.
\newblock Sociedade Brasileira de Matem\'{a}tica, Rio de Janeiro, 2005.

\bibitem[OW11]{Ollivier-Wise}
Y.~Ollivier and D.T. Wise.
\newblock Cubulating random groups at density less than 1/6.
\newblock {\em Transactions of the American Mathematical Society},
  363(9):4701--4733, 2011.

\bibitem[Oza16]{ozawapropertyt}
Narutaka Ozawa.
\newblock Noncommutative real algebraic geometry of {K}azhdan's {P}roperty
  ({T}).
\newblock {\em J. Inst. Math. Jussieu}, 15(1):85--90, 2016.

\bibitem[Par64]{parry1964intrinsic}
William Parry.
\newblock Intrinsic {M}arkov chains.
\newblock {\em Transactions of the American Mathematical Society},
  112(1):55--66, 1964.

\bibitem[Tro12]{tropp2012user}
Joel~A Tropp.
\newblock User-friendly tail bounds for sums of random matrices.
\newblock {\em Foundations of computational mathematics}, 12(4):389--434, 2012.

\bibitem[Wey12]{Weyl}
Hermann Weyl.
\newblock Das asymptotische {V}erteilungsgesetz der {E}igenwerte linearer
  partieller {D}ifferentialgleichungen (mit einer {A}nwendung auf die {T}heorie
  der {H}ohlraumstrahlung).
\newblock {\em Math. Ann.}, 71(4):441--479, 1912.

\bibitem[\.{Z}96]{zuk1996}
A.~\.{Z}uk.
\newblock La propri{\'e}t{\'e} {(T)} de {K}azhdan pour les groupes agissant sur
  les polyedres.
\newblock {\em CR Acad. Sci. Paris S{\'e}r. I Math.}, 323:453--458, 1996.

\bibitem[\.{Z}03]{Zuk}
A.~\.{Z}uk.
\newblock {P}roperty {(T)} and {K}azhdan constants for discrete groups.
\newblock {\em Geom. Funct. Anal.}, 13(3):643--670, 2003.

\end{thebibliography}
	{\sc{DPMMS, Centre for Mathematical Sciences, Wilberforce Road, Cambridge, CB3 0WB, UK}.} \textit{\textsc{\textit{E-mail address}}:}\textsc{ cja59@dpmms.cam.ac.uk}
\end{document}